%
%

\documentclass[MSNbibl,number,seceqn,citesort,dvips]{arxbj}
\usepackage{upgreek}
\usepackage{dcolumn}
\usepackage{accents}
\usepackage[left]{vmkcol}
\usepackage{graphicx}

%

\aid{0}
\volume{20}
\issue{1}
\pubyear{2014}
\firstpage{28}
\lastpage{77}
\doi{10.3150/12-BEJ475} 

\makeatletter

\renewcommand{\overrightarrow}[1]{\accentset{\longrightarrow}{#1}}

\newcolumntype{d}[1]{D{.}{.}{#1}}

\newcommand{\rrvert}{\vert}
\newcommand{\llvert}{\vert}

\newcommand{\RMo}{\mathrm{o}}
\newcommand{\RMO}{\mathrm{O}}

\newcommand{\RMe}{\mathrm{e}}

\newcommand{\mrmd}{\,\mathrm{d}}
\newcommand{\mrmdd}{\mathrm{d}}

\newtheorem{theorem}{Theorem}
\newproclaim{condition}{Condition}

\newproclaim{definition}{Definition}
\newtheorem{lemma}[theorem]{Lemma}
\newtheorem{proposition}[theorem]{Proposition}
\newremark{remark}{Remark}

\makeatother

\begin{document}
\begin{frontmatter}

\title{Gaussian semiparametric estimates on the unit sphere}
\runtitle{Gaussian semiparametric estimates on the unit sphere}

\begin{aug}
\author[1,2]{\fnms{Claudio} \snm{Durastanti}\thanksref{1,2,e1}\ead[label=e1,mark]{durastan@mat.uniroma2.it}},
\author[3]{\fnms{Xiaohong} \snm{Lan}\thanksref{3}\ead[label=e2]{xhlan@ustc.edu.cn}}
\and\\
\author[1]{\fnms{Domenico} \snm{Marinucci}\corref{}\thanksref{1,e3}\ead[label=e3,mark]{marinucc@mat.uniroma2.it}}
\runauthor{C. Durastanti, X. Lan and D. Marinucci} 
\address[1]{University of Rome Tor Vergata, Via della Ricerca Scientifica, 1, 00133 Roma, Italy.\\ \printead{e1,e3}}
\address[2]{University of Pavia, Via Ferrata, 1, 27100 Pavia, Italy}
\address[3]{University of Science and Technology of China, Hefei,
Anhui, China 230026, and University of Connecticut, 3 North Hillside Road Unit 6079
Storrs, CT 06269-6079, USA. \mbox{\printead{e2}}}
\end{aug}

\received{\smonth{10} \syear{2011}}
\revised{\smonth{4} \syear{2012}}

%
\begin{abstract}
We study the weak convergence (in the high-frequency limit) of the
parameter estimators of power spectrum coefficients associated with
Gaussian, spherical and isotropic random fields. In particular, we
introduce a Whittle-type approximate maximum likelihood estimator and
we investigate its asympotic weak consistency and Gaussianity, in both
parametric and semiparametric cases.
\end{abstract}

%
\begin{keyword}
\kwd{high frequency asymptotics}
\kwd{parametric and semiparametric estimates}
\kwd{spherical harmonics}
\kwd{spherical random fields}
\kwd{Whittle likelihood}
\end{keyword}

\end{frontmatter}

\section{Introduction}\label{SecIntroduction}

The purpose of this paper is to investigate the asymptotic behavior of a
Whittle-like approximate maximum likelihood procedure for the
estimation of
the spectral parameters (e.g., the \textit{spectral index}) of isotropic
Gaussian random fields defined on the unit sphere $\mathbb{S}^{2}$. In our
approach, we consider the expansion of the field into spherical harmonics,
that is, we implement a form of Fourier analysis on the sphere, and we implement
approximate maximum likelihood estimates under both parametric and
semiparametric assumptions on the behavior of the angular power
spectrum. We
stress that the asymptotic framework we are considering here is rather
different from usual -- in particular, we assume we are observing a single
realization of an isotropic field, the asymptotics being with respect to
higher and higher resolution data becoming available (i.e., higher and
higher frequency components being observed). In some sense, then the issues
we are considering are related to the growing area of fixed-domain
asymptotics (see, e.g., \cite{anderes,loh}). From the
point of
view of the proofs, on the other hand, our arguments are in some cases
reminiscent of those entertained, for instance, by \cite{Robinson}, where
semiparametric estimates of the long memory parameter for covariance
stationary processes are analyzed; see also \cite{guo} for related results
in the setting of anisotropic random fields.

In our assumptions, we do not impose a priori a parametric model on the
dependence structure of the random field we are analyzing; we rather
impose various forms of regularly varying conditions, which only
constrain the high-frequency behaviour of the angular power spectrum.
We are able to show consistency under the least restrictive
assumptions; a central limit theorem holds under more restrictive
conditions, while asymptotic Gaussianity can be established under
general conditions for a slightly-modified (\textit{narrow-band})
procedure, entailing a loss of a logarithmic factor in the rate of
convergence. Our analysis is strongly motivated by applications,
especially in a Cosmological framework (see, e.g.,
\cite{dode2004,cama}); in this area, huge datasets on isotropic,
spherical random fields (usually assumed to be Gaussian) are currently
being collected and made publicly available by celebrated satellite
missions such as \textit{WMAP} or \textit{Planck} (see, e.g.,
\url{http://map.gsfc.nasa.gov/}); parameter estimation of the spectral index
and other spectral parameters has been considered by many authors (see,
e.g., \cite{hamann} for a review), but no rigorous asymptotic
result has so far been produced, to the best of our knowledge. We thus
hope that the consistency and asymptotic Gaussianity properties we
provide for our Whittle-like procedure may provide a contribution
toward further developments. We refer also to
\cite{bkmpAoS,bkmpBer,fay08,glm,pbm06,pietrobon1,marpec2} for further
theoretical and applied results on angular power spectrum estimation,
in a purely nonparametric setting, and to
\cite{kimkoo,kookim,kim,kerkyphampic,dmg,leonenko1,leonenko2,ejslan,marpecbook}
for further results on statistical inference for spherical random
fields. Fixed-domain asymptotics for the tail behaviour of the spectral
density on Euclidean spaces has been recently considered also by
\cite{guo,anderes2} and \cite{wu}; the issue is of great interest, for
instance, in connection with kriging techniques for geophysical data
analysis, see \cite{stein} for a textbook reference.

The plan of the paper is as follows: in Section \ref
{subsecgaussrandomfiels}, we will recall briefly some well-known background
material on harmonic analysis for spherical isotropic random fields; in
Section \ref{secsphwhittle} we introduce Whittle-like maximum
pseudo-likelihood estimators for angular power spectrum coefficients based
on spherical harmonics; Section \ref{asconsistency} is devoted to the
asymptotic results, while in Section \ref{narrowband} we investigate
narrow-band estimates. The presence of observational noise is
considered in
Section \ref{noise}, while Section \ref{numerical} provides some numerical
evidence to validate the findings of the paper. Directions for future
research are discussed in Section \ref{conclusions}, while some auxiliary
technical results are collected in the \hyperref[app]{Appendix}.

\section{Spherical random fields and angular power spectrum} \label{subsecgaussrandomfiels}

In this section, we will present some well-known background results
concerning harmonic analysis on the sphere. We shall focus on zero-mean,
isotropic Gaussian random fields $T\dvtx \mathbb{S}^{2}\times\Omega
\rightarrow
\mathbb{R}$. It is well known that such fields can be given a spectral
representation such that
%
\begin{eqnarray}
\label{dirfour} T ( x ) &=&\sum_{l\geq0}\sum
_{m=-l}^{l}a_{lm}Y_{lm} ( x ) =
\sum_{l\geq0}T_{l} ( x ),
\\
\label{invfour} a_{lm} &=&\int_{\mathbb{S}^{2}}T ( x )
\overline{Y}_{lm} ( x ) \mrmd x,
\end{eqnarray}
where the set of homogenous polynomials $ \{ Y_{lm}\dvtx l\geq
0,m=-l,\ldots,l \} $ represents an orthonormal basis for the space
$%
L^{2} ( \mathbb{S}^{2},\mrmdd x )$, the class of functions defined on
the unitary sphere which are square-integrable with respect to the
measure $%
\mrmdd x$ (see, e.g., \cite{steinweiss,leonenko1,marpecbook},
for more details, and \cite{leosa,mal} for extensions). Note that
this equality holds in both $L^{2} ( \mathbb{S}^{2}\times\Omega,
\mrmdd x\otimes\mathbb{P} ) $ and $L^{2} ( \mathbb{P} ) $ senses
for every fixed $x\in\mathbb{S}^{2}$. We recall also that a field
$T (
\cdot ) $ is isotropic if and only if for every $g\in SO (
3 ) $ (the special group of rotations in $\mathbb{R}^{3}$) and
$x\in
\mathbb{S}^{2}$ (the unit sphere), we have
\[
T ( x ) \stackrel{d} {=}T ( gx ),
\]
where the equality holds in the sense of processes.

An explicit form for spherical harmonics is given in spherical
coordinates $%
\vartheta\in [ 0,\uppi  ]$, $\varphi\in[ 0,2\uppi  )$
by:
\begin{eqnarray*}
Y_{lm} ( \vartheta,\varphi ) &=&\sqrt{\frac{2l+1}{4\uppi }
\frac
{%
( l-m ) !}{ ( l+m ) !}}P_{lm} ( \cos\vartheta ) \RMe^{\mathrm{i}m\varphi}\qquad
\mbox{for }m\geq0,
\\
Y_{lm} ( \vartheta,\varphi ) &=& ( -1 )^{m}
\overline{Y}%
_{l,-m} ( \vartheta,\varphi )\qquad\mbox{for }m<0,
\end{eqnarray*}
$P_{lm} ( \cos\vartheta ) $ denoting the associated Legendre
function; for $m=0$, we have $P_{l0} ( \cos\vartheta )
=P_{l} ( \cos\vartheta )$, the standard set of Legendre
polynomials (see again \cite{steinweiss,marpecbook}). The following
orthonormality property holds:%
\[
\int_{\mathbb{S}^{2}}Y_{lm} ( x ) \overline{Y}_{l^{\prime
}m^{\prime}}
( x ) \mrmd x=\delta_{l}^{l^{\prime}}\delta_{m}^{m^{\prime}}.
\]

For an isotropic Gaussian field, the spherical harmonics coefficients $%
a_{lm} $ are Gaussian complex random variables such that%
\[
\mathbb{E} ( a_{lm} ) =0,\qquad \mathbb{E} ( a_{l_{1}m_{2}}%
\overline{a}_{l_{2}m_{2}} ) =\delta_{l_{2}}^{l_{1}}
\delta_{m_{2}}^{m_{1}}C_{l},
\]
where of course the angular power spectrum $C_{l}$ fully characterizes the
dependence structure under Gaussianity; here, $\delta_{a}^{b}$ is the
Kronecker delta, taking value one for $a=b$, zero otherwise. Further
characterizations of the spherical harmonics coefficients are provided, for
instance, by \cite{bm,marpecbook}; here we simply recall that
\[
\frac{a_{l0}^{2}}{C_{l}}\sim\chi_{1}^{2}\qquad\mbox{for }m=0,\qquad
\frac{%
2\llvert  a_{lm}\rrvert^{2}}{C_{l}}\sim\chi_{2}^{2}\qquad\mbox{for }%
m=\pm1,\pm2,\ldots,\pm l,
\]
where all these random variables are independent. Given a realization
of the
random field, an estimator of the angular power spectrum can be defined
as:%
%
\begin{equation}
\label{powest} \widehat{C}_{l}=\frac{1}{2l+1}\sum
_{m=-l}^{l}\llvert a_{lm}
\rrvert^{2},
\end{equation}
the so-called empirical angular power spectrum. It is immediately seen
that%
\[
\mathbb{E}\widehat{C}_{l}=\frac{1}{2l+1}\sum
_{m=-l}^{l}C_{l}=C_{l}, \qquad\operatorname{Var}
\biggl( \frac{\widehat{C}_{l}}{C_{l}} \biggr) =\frac
{2}{2l+1}\rightarrow0 \qquad\mbox{for
}l\rightarrow+\infty.
\]

We shall now focus on some semiparametric models on the angular power
spectrum; here, by semiparametric we mean that we shall assume a parametric
form on the asymptotic behavior of $C_{l}$, but we shall refrain from a full
characterization over all multipoles $l$. More precisely, we formulate the
following:
%
\begin{condition}
\label{A} The random field $T(x)$ is Gaussian and isotropic with angular
power spectrum such that:%
%
\begin{equation}
\label{reg2} C_{l}=G ( l ) l^{-\alpha_{0}}>0,
\end{equation}
where $\alpha_{0}>2$ and for all $l=1,2,\ldots$%
\[
0<c_{1}\leq G ( l ) \leq c_{2}<+\infty.
\]
\end{condition}

Condition \ref{A} seems very mild, as it is basically requiring only some
form of
regular variation on the tail behavior of the angular power spectrum
$C_{l}$. For instance, in the CMB framework the so-called
\textit{Sachs--Wolfe} power
spectrum (i.e., the leading model for fluctuations of the primordial
gravitational potential) takes the form (\ref{reg2}), the spectral
index $%
\alpha_{0}$ capturing the scale invariance properties of the field
itself ($%
\alpha_{0}$ is expected to be close to 2 from theoretical
considerations, a~prediction so far in good agreement with
observations, see, e.g., \cite{dode2004} and \cite{Larson}). For our
asymptotic results below, we shall need to strengthen it somewhat; as
we shall see, Condition \ref{A1} will turn out to be sufficient to
establish a rate of convergence for our estimator, under Condition
\ref{A2} we will be able to provide a Law of Large Numbers, while under
Condition \ref{A3} our estimates will be shown to be asymptotically
Gaussian and centered, thus making statistical inference feasible. On
the other hand, in Section \ref{narrowband} we shall be able to provide
narrow-band estimates with asymptotically centred limiting Gaussian law
under Condition \ref{A1}, to the price of a logarithmic term in the
rate of convergence. Of course, the conditions below are nested, that
is, Condition \ref{A3} implies Condition~\ref{A2}, which trivially
implies Condition~\ref{A1}.\looseness=1
%
\begin{condition}
\label{A1}Condition \ref{A} holds and moreover, $G(l)$ satisfies the
smoothness condition%
\[
G(l)=G_{0} \biggl\{ 1+\RMO\biggl(\frac{1}{l}\biggr) \biggr\}.
\]
\end{condition}
%
\begin{condition}
\label{A2} Condition \ref{A1} holds and moreover, $G(l)$ satisfies%
\[
G(l)=G_{0} \biggl\{ 1+\frac{\kappa}{l}+\RMo\biggl(\frac{1}{l}
\biggr) \biggr\}.
\]
\end{condition}
%
\begin{condition}
\label{A3} Condition \ref{A2} holds with $\kappa=0$, that is, $G(l)$ satisfies
the smoothness condition%
\[
G(l)=G_{0} \biggl\{ 1+\RMo\biggl(\frac{1}{l}\biggr) \biggr\}.
\]
\end{condition}

A straightforward example that satisfies the previous assumptions is
provided by the rational function%
%
\begin{equation}
\label{poly} G ( l ) =\frac{\Pi_{1} ( l ) }{\Pi_{2} ( l )
}=%
\frac{p_{k}l^{k}+\cdots+p_{1}l+p_{0}}{q_{k}l^{k}+\cdots+q_{1}l+pq},
\end{equation}
where $\Pi_{1}(l)$ and $\Pi_{2} ( l ) $ are positive valued
polynomials of order $k\in\mathbb{N}$, such that:
\[
0<c_{1}\leq\frac{\Pi_{1} ( l ) }{\Pi_{2} ( l )
}\leq c_{2}<+\infty.
\]
Clearly (\ref{poly}) satisfies Condition \ref{A2} (and hence Condition \ref{A1}) for
\[
G_{0}=\frac{p_{k}}{q_{k}}\quad\mbox{and}\quad\kappa=
\frac{p_{k-1}}{p_{k}}-\frac
{%
q_{k-1}}{q_{k}};
\]
Condition \ref{A3} is satisfied when $p_{k-1}=q_{k-1}=0$, or, more
generally, for $\frac{p_{k-1}}{p_{k}}=\frac{q_{k-1}}{q_{k}}$.

\section{A Whittle-like approximation to the likelihood function} \label{secsphwhittle}

Our aim in this section is to discuss heuristically a Whittle-like
approximation for the log-likelihood of isotropic spherical Gaussian fields,
and to derive the corresponding estimator. Assume that the triangular
array $%
\{ a_{lm} \} $, $m=-l,\ldots,l$, $l=1,2,\ldots,L$, is
evaluated from the
observed field $ \{ T(x) \} $, by means of (\ref{invfour}). Our
motivating rationale is the idea that a set of harmonic components up to
multipole $L$ can be reconstructed without observational noise or numerical
error, whereas the following are simply discarded; this is clearly a
simplified picture, but we believe it provides an accurate
approximation to
many current experimental set-ups. Of course, $L$ grows larger when more
sophisticated experiments are run ($L$ can be considered in the order of
$500/600$ for data collected from \textit{WMAP} and $1500/2000$ for those from
\textit{Planck}).  It is readily seen from (\ref{powest}) that%
\[
\widehat{C}_{l}=\frac{1}{2l+1} \Biggl\{ a_{l0}^{2}+2
\sum_{m=1}^{l} \bigl[ \Re \{
a_{lm} \} \bigr]^{2}+2\sum_{m=1}^{l}
\bigl[ \Im%
\{ a_{lm} \} \bigr]^{2} \Biggr\},
\]
where the variables $ \{ a_{l0},\sqrt{2}\Re \{ a_{l1} \},
\sqrt{2}\Im \{ a_{l1} \},\ldots,\sqrt{2}\Re \{
a_{ll} \},\sqrt{2}\Im \{ a_{ll} \}  \} $ are
i.i.d. Gaussian variables with law $\mathcal{N}(0,C_{l})$, see \cite{bm}.
The likelihood function can then be written down as
\[
-2\log\mathcal{L}_{l}\bigl(\theta; \{ a_{lm}
\}_{m=-l}^{l}\bigr)=\mathrm{const}+(2l+1)\frac{\widehat{C}_{l}}{C_{l}(\theta
)}-(2l+1)\log
\frac{\widehat{C}_{l}}{C_{l}(\theta)}.
\]

Clearly this landscape is overly simplified, for instance, due to numerical
errors and aliasing effects the expected value $\mathbb{E}\llvert
a_{lm}\rrvert^{2}$ may not be exactly equal to the population
model $C_{l}(\theta)$; however in Conditions \ref{A} and following we
are allowing
the two to differ to various degrees, and we expect this to cover to
some of
effect these experimental features that we are neglecting. Also, rather than
a sharp cutoff at $L$, a smooth transition toward noisier frequencies would
represent more efficiently actual experimental circumstances; we shall
address this issue later on in this paper. Finally, it may be unreasonable
to assume that the spherical surface is fully observed; for most
experimental set-ups, either in Cosmology or in Geophysics, only
subsets are
actually sampled. This problem can be addressed by focussing on wavelet
transforms rather than standard Fourier analysis; we shall consider this
extension in a different work.

An alternative heuristics for our framework can be introduced considering
that for $l=1,2,\ldots,L$, the following Fourier components can be
observed on
a discrete grid of points $ \{ x_{1},\ldots,x_{K} \} $%
\[
\overrightarrow{T}_{l}= \bigl\{ T_{l}(x_{1}),
\ldots,T_{l}(x_{k}),\ldots,T_{l}(x_{K})
\bigr\}.
\]
To simplify our discussion, we shall also pretend that $\mathcal{X}%
_{k}:= \{ x_{1},\ldots,x_{K} \} $ form a set of approximate
\textit{cubature points} with constant \textit{cubature weights} $\lambda_{k}=4\uppi /K$
(see, e.g., \cite{MNW,npw1}), so that we have%
\[
\sum_{k}\frac{4\uppi }{K}Y_{lm_{1}}(x_{k})
\overline {Y}_{lm_{2}}(x_{k})\simeq \delta_{m_{1}}^{m_{2}}
\qquad\mbox{for }l=1,2,\ldots,L.
\]
As discussed also by \cite{bkmpBer}, the number of cubature points must
grow at least as quickly as the square of the highest multipole
considered, that is, $L^{2}=\RMO(\operatorname{card}(\mathcal{X}_{k}))$. For instance,
for a satellite experiment such as \textit{Planck} the pixelization has
cardinality of order $5\times 10^{6}$, and the highest multipole that
can be analyzed correspond broadly to the order $l=2\times10^{3}$. As
before, this landscape is overly simplified; for instance, cubature
weights on the sphere are known not to be constant, but their variation
is usually considered numerically negligible.

The frequency components $T_{l}$ are well known to be independent and
we can
hence write down the likelihood function as
\[
\mathcal{L}(\theta;T):=\prod_{l=1}^{L}
\mathcal{L}_{l}(\theta;%
\overrightarrow{T}_{l}),
\]
where
\begin{eqnarray*}
\mathcal{L}_{l}(\theta;\overrightarrow{T}_{l})&=&(2\uppi
)^{-(2l+1)/2}\Omega_{l}^{-1/2}\exp \biggl\{ -
\frac{1}{2}\overrightarrow{T}_{l}^{\prime
}
\Omega_{l}^{-1}\overrightarrow{T}_{l} \biggr\},
\\
\{ \Omega_{l} \}_{jk}&=& \biggl\{ \Omega_{l}(x_{j},x_{k})=
\frac
{2l+1%
}{4\uppi }C_{l}P_{l}\bigl( \langle
x_{j},x_{k} \rangle\bigr) \biggr\}.
\end{eqnarray*}
The matrix $\Omega_{l}$ can be (approximately) decomposed as follows:%
\begin{eqnarray*}
\Omega_{l}&\simeq&\sqrt{\frac{4\uppi }{K}} \left[\matrix{
Y_{l,-l}(x_{1}) & Y_{l,-l+1}(x_{1}) &
\cdots& Y_{l,l}(x_{1})
\cr
Y_{l,-l}(x_{2})
& \cdots& \cdots& Y_{l,l}(x_{2})
\cr
\vdots& \vdots& \vdots&
\vdots
\cr
Y_{l,-l}(x_{K}) & Y_{l,-l+1}(x_{K})
& \cdots& Y_{l,l}(x_{K})} \right]
\\
&&{}\times\frac{K}{4\uppi }C_{l}I_{2l+1}\times\sqrt{
\frac{4\uppi }{K}} \left[ \matrix{ \overline{Y}_{l,-l}(x_{1})
& \overline{Y}_{l,-l}(x_{2}) & \cdots& \overline{Y}%
_{l,-l}(x_{K})
\cr
\overline{Y}_{l,-l+1}(x_{1}) & \cdots& \cdots&
\overline{Y}_{l,-l+1}(x_{K})
\cr
\vdots& \vdots& \vdots& \vdots
\cr
\overline{Y}_{l,l}(x_{1}) & \overline{Y}_{l,l}(x_{2})
& \cdots& \overline {Y}_{l,l}(x_{K})} \right]
\\
&=&\!\!:\mathcal{Y}_{l}\times C_{l}(\theta)I_{2l+1}
\times\mathcal {Y}_{l}^{\ast}.
\end{eqnarray*}
In fact
\[
\mathcal{Y}_{l}^{\ast}\mathcal{Y}_{l}\simeq
I_{2l+1}\quad\mbox{and}\quad\det \{ \Omega_{l} \} \simeq
C_{l}^{2l+1}(\theta).
\]
Hence,
\[
-2\log\mathcal{L}_{l}(\theta;\overrightarrow{T}_{l})\simeq
K+(2l+1)\log C_{l}(\theta)+ \bigl\{ \overrightarrow{T}_{l}^{\prime}
\mathcal {Y}_{l}\times C_{l}^{-1}(
\theta)I_{2l+1}\times\mathcal{Y}_{l}^{\ast}
\overrightarrow {T}%
_{l} \bigr\}.
\]
Now%
\begin{eqnarray*}
\mathcal{Y}_{l}^{\ast}\overrightarrow{T}_{l} &=&
\sqrt{\frac{4\uppi
}{K}} \left[ \matrix{ \overline{Y}_{l,-l}(x_{1})
& \overline{Y}_{l,-l}(x_{2}) & \cdots& \overline{Y}%
_{l,-l}(x_{K})
\cr
\overline{Y}_{l,-l+1}(x_{1}) & \cdots& \cdots&
\overline{Y}_{l,-l+1}(x_{K})
\cr
\vdots& \vdots& \vdots& \vdots
\cr
\overline{Y}_{l,l}(x_{1}) & \overline{Y}_{l,l}(x_{2})
& \cdots& \overline {Y}_{l,l}(x_{K})} \right]
\\
&&{}\times \left[ \matrix{ \sum_{m}a_{lm}Y_{lm}(x_{1})
\cr
\sum_{m}a_{lm}Y_{lm}(x_{2})
\cr
\vdots
\cr
\sum_{m}a_{lm}Y_{lm}(x_{K})}
\right]
\\
&=&\sqrt{\frac{4\uppi }{K}} \left[ \matrix{ \sum_{m_{1}}a_{lm_{1}}
\sum_{k}Y_{lm_{1}}(x_{k})\overline
{Y}_{l,-l}(x_{k})
\cr
\sum_{m_{1}}a_{lm_{1}}
\sum_{k}Y_{lm_{1}}(x_{k})
\overline{Y}_{l,-l+1}(x_{k})
\cr
\vdots
\cr
\sum
_{m_{1}}a_{lm_{1}}\sum_{k}Y_{lm_{1}}(x_{k})
\overline{Y}_{l,l}(x_{k})} \right] \\
&\simeq&\sqrt{
\frac{K}{4\uppi }} \left[ \matrix{ a_{l,-l}
\cr
a_{l,-l+1}
\cr
\vdots
\cr
a_{l,l}} \right],
\end{eqnarray*}
whence
\[
\biggl\{ \overrightarrow{T}_{l}^{\prime}\mathcal{Y}_{l}
\times\frac
{4\uppi }{K}%
\frac{1}{C_{l}(\theta)}I_{2l+1}\times
\mathcal{Y}_{l}^{\ast}%
\overrightarrow{T}_{l}
\biggr\} \simeq\sum_{m}\frac{\llvert
a_{lm}\rrvert^{2}}{C_{l}(\theta)}=(2l+1)
\frac{\widehat{C}_{l}}{%
C_{l}(\theta)}.
\]
As before, we can then conclude heuristically that%
%
\begin{equation}
\label{appwhit} -2\log\mathcal{L}_{l}(\theta;\overrightarrow{T}_{l})
\simeq \mathrm{const}+(2l+1)%
\frac{\widehat{C}_{l}}{C_{l}(\theta)}-(2l+1)\log\frac{\widehat
{C}_{l}}{%
C_{l}(\theta)}.
\end{equation}
Again we stress that for a general spherical random field with an
infinite-terms expansion such as (\ref{dirfour}) the relationship (\ref%
{appwhit}) cannot hold exactly; indeed, precise cubature formulae can be
established only for finite order spherical harmonics. In general, this may
introduce some numerical error: as mentioned before, however, we
pretend in
this paper that such correction factors are covered by Conditions
\ref{A}--\ref{A3}. In other words, we envisage a situation where data analysis is
carried over on multipoles $l$ where numerical errors are of smaller order
and the approximation (\ref{reg2}) holds for the expected variance of the
sample coefficients $ \{ a_{lm} \} $.

\section{Asymptotic results: Consistency and asymptotic Gaussianity}
\label{asconsistency}

As motivated in the \hyperref[SecIntroduction]{Introduction}, in this paper we shall not assume we have
actually available a fully parametric model for the angular power spectrum.
Instead, the idea will be to use an approximate maximum likelihood
estimator, which shall exploit the asymptotic approximation provided by
Condition \ref{A}, that is, $C_{l}\simeq Gl^{-\alpha}$. In view of the
discussion in the previous section, the following Definition seems rather
natural:
%
\begin{definition}
The \textup{Spherical Whittle estimator} for the parameters $(\alpha_{0},G_{0})$ is provided by%
\[
(\widehat{\alpha}_{L},\widehat{G}_{L}):=\arg
\min_{\alpha\in A,G\in
(0,\infty)}\sum_{l=1}^{L} \biggl\{
(2l+1)\frac{\widehat
{C}_{l}}{Gl^{-\alpha}}%
-(2l+1)\log\frac{\widehat{C}_{l}}{Gl^{-\alpha}} \biggr\}.
\]
\end{definition}
%
\begin{remark}
For general parametric models $C_{l}=C_{l}(\vartheta)$, the \textit{Spherical
Whittle estimator} for a parameter $\vartheta\in\Theta\subset\mathbb
{R}%
^{p}$ can be obviously defined as%
\[
\widehat{\vartheta}_{L}:=\arg\min_{\vartheta\in\Theta
}\sum
_{l=1}^{L} \biggl\{ (2l+1)\frac{\widehat{C}_{l}}{C_{l}(\vartheta)}%
-(2l+1)\log\frac{\widehat{C}_{l}}{C_{l}(\vartheta)} \biggr\}.
\]
\end{remark}
%
\begin{remark}
To ensure that the estimator exists, as usual we shall assume throughout
this paper that the parameter space for $\alpha$ is a compact subset
of $\mathbb{R}$; more precisely we take $\alpha\in A= [
a_{1},a_{2} ]$, $2<a_{1}<a_{2}<\infty$, and $G\in(0,\infty)$. This
is little more
than a formal requirement that\vadjust{\goodbreak} is standard in the literature on
(pseudo-)maximum likelihood estimation. It should be noted that Spherical
Whittle estimates are computationally extremely convenient, while their
counterpart in the real domain is for all practical purposes unfeasible,
given the dimension of current datasets.
\end{remark}
%
\begin{remark}
Under Condition \ref{A3}, it is readily seen that $(2l+1)\widehat{C}%
_{l}/Gl^{-\alpha_{0}}$ is asymptotically distributed as a Gamma random
variables of parameters $ \{ 2l+1,1 \} $, and the Spherical Whittle
estimator is asymptotically equivalent to exact maximum likelihood.
\end{remark}

We can rewrite in a more transparent form the previous estimator following
an argument analogous to \cite{Robinson}, that is, ``concentrating out''
the parameter $G$. Indeed, the previous
minimization problem is equivalent to let us consider%
\begin{eqnarray*}
(\widehat{\alpha}_{L},\widehat{G}_{L}) &:=&\arg
\min_{\alpha,G}\mathcal {R}%
_{L}(G,\alpha),
\\
\mathcal{R}_{L}(G,\alpha) &:=& \sum_{l=1}^{L}(2l+1)
\frac{\widehat
{C}_{l}}{%
Gl^{-\alpha}}+\sum_{l=1}^{L}(2l+1)\log
G\\
&&{}+\sum_{l=1}^{L}(2l+1)\log
l^{-\alpha}.
\end{eqnarray*}
Simple computations show that the minimization problem can be equivalently
reformulated as
%
\begin{eqnarray}
\label{defest} \widehat{\alpha}_{L} &=&\arg\min_{\alpha}R_{L}(
\alpha),
\nonumber\\[-8pt]\\[-8pt]
R_{L}(\alpha) &=& \Biggl( \log\widehat{G}(\alpha)-\frac{\alpha}{%
\sum_{l=1}^{L}(2l+1)}
\sum_{l=1}^{L}(2l+1)\log l \Biggr).
\nonumber
\end{eqnarray}
The proof of the following result is quite standard and goes largely along
the lines of an analogous results provided in \cite{Robinson}. As most of
the ones to follow, is delayed to the \hyperref[app]{Appendix}.
%
\begin{theorem}
\label{consistenza} Under Condition \ref{A}, as $L\rightarrow\infty$ we
have
\[
\widehat{\alpha}_{L}\rightarrow_{p}\alpha_{0};
\]
moreover, under Condition \ref{A1},
\[
\widehat{G}_{L}\rightarrow_{p}G_{0}.
\]
\end{theorem}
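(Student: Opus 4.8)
The plan is to treat $\widehat{\alpha}_L$ as the argmin over the compact set $A=[a_1,a_2]$ of the profiled criterion $R_L(\alpha)$ and to run the standard consistency argument for extremum estimators: it suffices to exhibit a nonrandom limit function, minimized at $\alpha_0$ in a well-separated way, to which $Q_L(\alpha):=R_L(\alpha)-R_L(\alpha_0)$ converges uniformly in probability. First I would set up the bookkeeping. Write $U_l:=\widehat{C}_l/C_l$, so that $(2l+1)U_l$ are independent $\chi^2_{2l+1}$ variables with $\mathbb{E}U_l=1$ and $\mathrm{Var}(U_l)=2/(2l+1)$, and substitute $\widehat{C}_l=G(l)\,l^{-\alpha_0}U_l$ into $\widehat{G}(\alpha)=D_L^{-1}\sum_{l=1}^{L}(2l+1)\widehat{C}_l\,l^{\alpha}$, where $D_L:=\sum_{l=1}^{L}(2l+1)\sim L^{2}$. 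Setting $\beta:=\alpha-\alpha_0$ this gives $\widehat{G}(\alpha)=D_L^{-1}\sum_{l=1}^{L}(2l+1)G(l)\,l^{\beta}U_l$, and the quantity to analyse is
\[
Q_L(\alpha)=\log\frac{\widehat{G}(\alpha)}{\widehat{G}(\alpha_0)}-\beta\,\overline{m}_L,\qquad \overline{m}_L:=\frac{\sum_{l=1}^{L}(2l+1)\log l}{D_L}.
\]

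Next I would isolate the deterministic skeleton by replacing $U_l$ with its mean. The controlling elementary asymptotics are $D_L^{-1}\sum_{l=1}^{L}(2l+1)(l/L)^{\beta}\to 2/(2+\beta)$ for $\beta>-2$ and $\overline{m}_L=\log L-\tfrac12+o(1)$; hence, to leading order, $\log\widehat{G}(\alpha)=\beta\log L+\log\!\big(2/(2+\beta)\big)+O(1)$, the divergent term $\beta\log L$ cancelling against $\beta\,\overline{m}_L$ in $Q_L$. This suggests the limiting nonrandom criterion
\[
Q(\alpha)=\log\frac{2}{2+\beta}+\frac{\beta}{2},\qquad \beta=\alpha-\alpha_0,
\]
which is strictly convex and satisfies $Q(\alpha_0)=0$, $Q'(\alpha_0)=0$, so that it is uniquely and well-separatedly minimized at $\alpha=\alpha_0$. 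Following the argument of \cite{Robinson}, I would split $A$ at $\alpha=\alpha_0$ and treat the range $\alpha\le\alpha_0-2$ separately, since there the defining sums are dominated by the low-$l$ terms and the asymptotics above break down; on the complementary range strict convexity gives the separation $\inf_{|\alpha-\alpha_0|\ge\delta}Q(\alpha)>0$.

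The stochastic part requires showing that the fluctuation is uniformly negligible. The centred increment $D_L^{-1}\sum_{l=1}^{L}(2l+1)G(l)\,l^{\beta}(U_l-1)$ has variance $2D_L^{-2}\sum_{l=1}^{L}(2l+1)G(l)^{2}l^{2\beta}$, and using $c_1\le G(l)\le c_2$ together with $D_L\sim L^{2}$ one checks this tends to $0$ for each $\alpha\in A$; since the summands are monotone in $\alpha$ I would upgrade pointwise convergence to uniform convergence by a finite-grid/monotonicity argument. Combined with a bound keeping $\widehat{G}(\alpha)L^{-\beta}$ away from $0$ and $\infty$, the continuous mapping theorem then gives $\sup_{\alpha\in A}|Q_L(\alpha)-Q(\alpha)|\rightarrow_p 0$, and the standard argmin theorem yields $\widehat{\alpha}_L\rightarrow_p\alpha_0$.

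The hard part will be the identification step rather than the stochastic bookkeeping: the genuine limit of $Q_L$ carries, in addition to $Q(\alpha)$, a bounded contribution coming from the $G$-weighted averages of $\log l$, namely terms of the form $\langle\log l\rangle_{(2l+1)G(l)l^{\beta}}-\langle\log l\rangle_{(2l+1)}$, and the crux is to show that this correction does not displace the minimiser away from $\alpha_0$. This is exactly where the regularity imposed on $G$ is used — the boundedness in Condition \ref{A} confines the correction and, together with the separation of $Q$, pins the argmin down — and it is the step I expect to demand the most care. Finally, for $\widehat{G}_L\rightarrow_p G_0$ I would write $\widehat{G}_L=\widehat{G}(\widehat{\alpha}_L)$ and first establish $\widehat{G}(\alpha_0)=D_L^{-1}\sum_{l=1}^{L}(2l+1)G(l)U_l\rightarrow_p G_0$: the variance vanishes as above, while Condition \ref{A1} (so that $G(l)=G_0\{1+O(1/l)\}$) forces the weighted mean of $G(l)$ to converge to $G_0$; a stochastic-equicontinuity bound in $\alpha$ then transfers this from $\alpha_0$ to the already-consistent $\widehat{\alpha}_L$.
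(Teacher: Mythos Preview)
Your approach is essentially the paper's --- both run the Brillinger--Robinson argmin scheme, and your limit $Q(\alpha)=\beta/2-\log(1+\beta/2)$ agrees with Lemma~\ref{consistency1}. The paper's decomposition differs slightly and is worth noting: it introduces the idealized deterministic quantity $G(\alpha):=D_L^{-1}\sum(2l+1)G_0\,l^{\alpha-\alpha_0}$ (with $G_0 l^{-\alpha_0}$ in place of $C_l$) and writes $\Delta R_L=U_L-T_L$, where $U_L$ is purely deterministic and $T_L=\log(\widehat G(\alpha_0)/G(\alpha_0))-\log(\widehat G(\alpha)/G(\alpha))$. This absorbs both the randomness and your ``hard part'' (the $G(l)$-weighted correction) into $T_L$; uniform negligibility of $T_L$ is then handled by Lemma~\ref{consistency2} via Lemma~\ref{lemmagiovedi}. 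Both of those supporting lemmas are in fact stated under Condition~\ref{A1}, not Condition~\ref{A}, so your concern about whether boundedness alone pins down the minimiser is well placed --- the paper's own argument invokes the stronger regularity at exactly this point.

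One step in your plan would not go through as written. For uniform negligibility of the stochastic part you propose to ``upgrade pointwise convergence to uniform convergence by a finite-grid/monotonicity argument,'' but the centred factors $U_l-1$ take both signs, so $\sum(2l+1)G(l)\,l^{\beta}(U_l-1)$ is not monotone in $\alpha$ and a bracketing argument does not apply directly. The paper's device (Lemma~\ref{lemmagiovedi}) is to bound $\sup_\alpha|\widehat G(\alpha)/G(\alpha)-1|$ by the $\alpha$-free quantity $\sup_l\sqrt{2l+1}\,|\widehat C_l/(G_0 l^{-\alpha_0})-1|$ and then control the latter by a high-moment union bound over $l$; this delivers uniformity without any monotonicity. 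For $\widehat G_L\to_p G_0$, the paper splits $\widehat G(\widehat\alpha_L)-G_0$ into two pieces and feeds in the rate $\mathbb E|\widehat\alpha_L-\alpha_0|=O(L^{-1}\log L)$ established later in Theorem~\ref{clt0}; your stochastic-equicontinuity route is a reasonable alternative that avoids this forward reference.
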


Next step is the investigation of the asymptotic distribution. To this aim,
we shall exploit some classical argument on asymptotic Gaussianity for
extremum estimates, as recalled, for instance, by \cite{NmcF}, Theorem 3.1.
%
\begin{theorem}
\label{clt0} Let $\widehat{\alpha}_{L}=\arg\min_{\alpha\in A}
R_{L}(\alpha)$ defined as in (\ref{defest}).

\begin{longlist}[(a)]
\item[(a)]
Under Condition \ref{A1} we have that%
%
\begin{equation}
\label{carave1}
\hspace*{-11pt}\bigl\{ \mathbb{E} ( \widehat{\alpha}_{L}-
\alpha_{0} )^{2} \bigr\}^{1/2}=\RMO\biggl(
\frac{\log L}{L}\biggr)\qquad\mbox{whence } ( \widehat{%
\alpha}_{L}-\alpha_{0} ) =\RMO_{p}\biggl(
\frac{\log L}{L}\biggr)\qquad\mbox{as }%
L\rightarrow\infty.
\end{equation}

\item[(b)] Under Condition \ref{A2} we have that%
%
\begin{equation}
\label{carave2} \frac{L}{4\log L} ( \widehat{\alpha}_{L}-
\alpha_{0} ) \longrightarrow_{p}-\kappa.
\end{equation}

\item[(c)] Under Condition \ref{A3} we have that
%
\begin{equation}
\label{carave3} \frac{\sqrt{2}L}{4} ( \widehat{\alpha}_{L}-
\alpha_{0} ) \stackrel{d%
} {\longrightarrow}\mathcal{N} (
0,1 ).
\end{equation}
\end{longlist}
\end{theorem}
\begin{pf}
We note first that under Condition \ref{A3}, (\ref{carave3}) is an immediate
consequence of (\ref{carave2}); on the other hand, the proof of (\ref%
{carave1}) follows on exactly the same lines as (\ref{carave2}), the only
difference here being that the asymptotic bias term cannot be given an
analytic expression but only bounded. It is then sufficient to
establish (%
\ref{carave2}), as we shall do below.

Following the notation introduced above, for each $L$ there exists
$\overline{%
\alpha}_{L}\in ( \alpha_{0}-\widehat{\alpha},\alpha_{0}+\widehat{%
\alpha} ) $ such that, with probability one:%
\[
( \widehat{\alpha}_{L}-\alpha_{0} ) =-\frac{S_{L}(\alpha_{0})}{%
Q_{L}(\overline{\alpha}_{L})},
\]
where $S_{L}(\alpha)$ is the score function corresponding to
$R_{L}(\alpha)
$, given by:
\[
S_{L}(\alpha)=\frac{\mrmdd}{\mrmdd\alpha}R(\alpha)=\frac{\widehat
{G}_{1}(\alpha)}{%
\widehat{G}(\alpha)}-
\frac{1}{\sum_{l=1}^{L}(2l+1)}\sum_{l=1}^{L}(2l+1)\log
l
\]
and%
\begin{eqnarray*}
Q_{L}(\alpha)&=&\frac{\mrmdd}{\mrmdd\alpha}S_{L}(\alpha)=
\frac{\mrmdd^{2}}{\mrmdd\alpha^{2}}%
R(\alpha)=\frac{\widehat{G}_{2}(\alpha)\widehat{G}(\alpha)-\widehat
{G}%
_{1}^{2}(\alpha)}{\widehat{G}{}^{2}(\alpha)}
\\[-2pt]
&=&\Biggl(\sum_{l=1}^{L}(2l+1)\bigl(\log^{2}l\bigr)\frac{\widehat
{C}_{l}}{l^{-\alpha}}%
\Biggl\{ \sum_{l=1}^{L}(2l+1)\frac{\widehat{C}_{l}}{l^{-\alpha}} \Biggr\}
- \Biggl\{ \sum_{l=1}^{L}(2l+1)(\log l)\frac{\widehat{C}_{l}}{l^{-\alpha
}}%
\Biggr\}^{2}\Biggr)\\[-2pt]
&&{}\Big/
\Biggl\{ \sum_{l=1}^{L}(2l+1)\frac{\widehat
{C}_{l}}{l^{-\alpha
}} \Biggr\}^{2},
\end{eqnarray*}
where $\widehat{G}(\alpha)$, $\widehat{G}_{1}(\alpha)$, $\widehat{G}%
_{2}(\alpha)$ are, respectively, the estimate of $G$ and its first and second
derivatives, as in Lemma \ref{lemmagiovedi}. By direct substitution, we have
immediately:\vspace*{-1pt}%
\[
S_{L}(\alpha)=\frac{1}{\sum_{l=1}^{L}(2l+1)}\sum_{l=1}^{L}(2l+1)
\log l \biggl\{ \frac{\widehat{C}_{l}}{\widehat{G}(\alpha)l^{-\alpha
}}-1 \biggr\}.\vspace*{-1pt}
\]
Now,
\begin{eqnarray*}
S_{L}(\alpha_{0}) &=&\frac{G_{0}}{\widehat{G}(\alpha_{0})}\frac{1}{%
\sum_{l=1}^{L}(2l+1)}
\sum_{l=1}^{L}(2l+1)\log l \biggl\{
\frac{\widehat
{C}_{l}%
}{G_{0}l^{-\alpha_{0}}}-\frac{\widehat{G}(\alpha_{0})}{G_{0}} \biggr\}
\\[-2pt]
&=&\frac{G_{0}}{\widehat{G}(\alpha_{0})}\overline{S}_{L}(\alpha_{0}),\vspace*{-1pt}
\end{eqnarray*}
where\vspace*{-1pt}%
\[
\overline{S}_{L}(\alpha_{0}) =\frac{1}{\sum_{l=1}^{L}(2l+1)}%
\sum_{l=1}^{L}(2l+1)\log l \biggl\{
\frac{\widehat{C}_{l}}{G_{0}l^{-\alpha
_{0}}}-1 \biggr\}
\]
and
\[
\frac{G_{0}}{\widehat{G}(\alpha_{0})} =1+\RMo_{p}(1)\qquad\mbox{as }%
L
\rightarrow\infty
\]
in view of Lemma \ref{lemmagiovedi}. Also%
\begin{eqnarray*}
\mathbb{E}\overline{S}_{L}(\alpha_{0}) &=&
\frac{1}{\sum_{l=1}^{L}(2l+1)}%
\sum_{l=1}^{L}(2l+1)
\log l \biggl\{ \frac{C_{l}}{G_{0}l^{-\alpha_{0}}}%
-1 \biggr\}
\\[-2pt]
&=&\frac{\kappa}{\sum_{l=1}^{L}(2l+1)}\sum_{l=1}^{L}(2l+1)
\frac{\log
l}{l}%
+\RMo\biggl(\frac{\log L}{L}\biggr)=\RMO\biggl(
\frac{\log L}{L}\biggr)\rightarrow0
\end{eqnarray*}
and%
%
\begin{equation}
\label{varSl} \lim_{L\rightarrow\infty}2L^{2}\operatorname{Var} \bigl\{
\overline{S}_{L}(\alpha_{0}) \bigr\} =1.
\end{equation}
In fact, we have:%
\[
\operatorname{Var} \bigl\{ \overline{S}_{L}(\alpha_{0}) \bigr\}
=V_{1}+V_{2}+V_{3},
\]
where%
\begin{eqnarray*}
V_{1} &=& \biggl\{ \frac{1}{\sum_{l=1}^{L}(2l+1)} \biggr\}^{2}\sum
_{l=1}^{L} ( 2l+1 )^{2} ( \log l
)^{2}\operatorname{Var} \biggl\{ \frac{\widehat{C}_{l}}{G_{0}l^{-\alpha_{0}}} \biggr\}
\\
&=& \biggl( \frac{1}{\sum_{l=1}^{L}(2l+1)} \biggr)^{2}2\sum
_{l=1}^{L} ( 2l+1 ) ( \log l )^{2};
\\
V_{2}&=& \biggl\{ \frac{1}{\sum_{l=1}^{L}(2l+1)} \biggr\}^{2} \Biggl(
\sum_{l=1}^{L} ( 2l+1 ) \log l
\Biggr)^{2}\operatorname{Var} \biggl( \frac
{\widehat{G%
}(\alpha_{0})}{G_{0}} \biggr);
\\
V_{3}&=&\frac{-2}{\sum_{l}(2l+1)}\sum_{l=1}^{L}
( 2l+1 ) \log l\operatorname{Cov} \biggl( \frac{\widehat{C}_{l}}{C_{l}},\frac{\widehat{G}(\alpha_{0})}{%
G_{0}} \biggr) \cdot
\frac{-2}{\sum_{l}(2l+1)}\sum_{l=1}^{L} ( 2l+1 )
\log l.
\end{eqnarray*}
Now because%
\begingroup
\abovedisplayskip=7pt
\belowdisplayskip=7pt
\begin{eqnarray}
\label{varG}
\operatorname{Var}
\biggl( \frac{\widehat{G}(\alpha_{0})}{G_{0}} \biggr) &=& \biggl\{ \frac
{1}{%
\sum_{l=1}^{L}(2l+1)} \biggr \}^{2}\sum_{l=1}^{L} ( 2l+1
)^{2}\operatorname{Var} \biggl\{
\frac{\widehat{C}_{l}}{G_{0}l^{-\alpha_{0}}} \biggr\} \nonumber
\nonumber\\[-9pt]\\[-9pt]
&=&\frac{2}{\sum_{l=1}^{L}(2l+1)};
\nonumber\\[-2pt]
\label{covarG}
\operatorname{Cov} \biggl( \frac{\widehat{C}_{l}}{C_{l}},\frac{\widehat{G}(\alpha_{0})}{G_{0}%
} \biggr) &=&
\frac{1}{\sum_{l^{\prime}=1}^{L}(2l^{\prime}+1)}%
\sum_{l^{\prime}=1}^{L}
\bigl( 2l^{\prime}+1 \bigr) \operatorname{Cov} \biggl( \frac{%
\widehat{C}_{l}}{C_{l}},
\frac{\widehat{C}_{l^{\prime}}}{C_{l^{\prime
}}}%
\biggr)
\nonumber\\[-9pt]\\[-9pt]
&=&\frac{2}{\sum_{l^{\prime}=1}^{L}(2l+1)};\nonumber
\end{eqnarray}
we have
\begin{eqnarray*}
\operatorname{Var} \bigl\{ \overline{S}_{L}(\alpha_{0}) \bigr\}
&=&\frac{2}{ ( \sum_{l=1}^{L}(2l+1) )^{3}} \Biggl( \sum_{l=1}^{L}
( 2l+1 ) \sum_{l=1}^{L} ( 2l+1 ) ( \log l
)^{2}- \Biggl( \sum_{l=1}^{L} (
2l+1 ) \log l \Biggr)^{2} \Biggr)
\\[-2pt]
&=&\frac{2}{L^{6}}\frac{L^{4}}{4}=\frac{1}{2L^{2}}
\end{eqnarray*}
by using (\ref{sumLcorol}) and (\ref{slog0}) with $s=0$ to obtain (\ref%
{varSl}). In order to establish the central limit theorem, it is sufficient
to perform a careful analysis of fourth-order cumulants (note our statistics
belong to the second-order Wiener chaos with respect to a Gaussian white
noise random measure). Write:%
\[
LS_{L}(\alpha_{0})=\frac{1}{L+\RMO_{L}(1)}\sum
_{l}(A_{l}+B_{l}),
\]
where%
%
\begin{eqnarray}
\label{Al} A_{l} &=&(2l+1)\log l \biggl\{ \frac{\widehat{C}_{l}}{C_{l}}-1 \biggr
\},
\\[-2pt]
\label{Bl} B_{l} &=&(2l+1)\log l \biggl\{ \frac{\widehat{G}_{L}(\alpha_{0})}{G_{0}}%
-1 \biggr\}.
\end{eqnarray}
In the \hyperref[app]{Appendix}, we show that%
\[
\frac{1}{L^{4}}\operatorname{cum} \biggl\{ \sum_{l_{1}}(A_{l_{1}}+B_{l_{1}}),
\sum_{l_{2}}(A_{l_{2}}+B_{l_{2}}), \sum
_{l_{3}}(A_{l_{3}}+B_{l_{3}}),\sum
_{l_{4}}(A_{l_{4}}+B_{l_{4}}) \biggr\}
=\RMO_{L}\biggl(\frac{\log^{4}L}{L^{2}}\biggr),
\]
whence the central limit theorem follows easily from results in
\cite{nourdinpeccati}. Indeed, using recent results from the latter
authors a
stronger result follows, that is,
\[
d_{\mathrm{TV}}\Biggl(\sum_{l=1}^{L}X_{l;L},Z
\Biggr)=\RMO\biggl(\frac{1}{L}\biggr),\qquad Z\stackrel{d} {=}%
\mathcal{N}(0,1),\vadjust{\goodbreak}
\]
\endgroup
where $d_{\mathrm{TV}}(W,V)$ denotes the total variation distance between the random
variables $W,V$, that is,
\[
d_{\mathrm{TV}}(W,V)=\sup_{x}\bigl\llvert \Pr \{ W\in B \} -\Pr \{
V\in B \} \bigr\rrvert \qquad\mbox{any Borel set }B.
\]
Also%
\[
\frac{L}{\log L}\mathbb{E}\overline{S}_{L}(\alpha_{0})=
\kappa\frac{L}{
\sum_{l=1}^{L}(2l+1)}\sum_{l=1}^{L}
\frac{(2l+1)}{l}\frac{\log l}{\log L} 
+\RMo(1)\rightarrow-\kappa\qquad
\mbox{as }L\rightarrow\infty.
\]
Let us now focus on the second order derivative. From consistency, it is
sufficient to focus on $\llvert \alpha-\alpha_{0}\rrvert <2$; here
we can apply again Lemma \ref{lemmagiovedi}, replacing the random quantities
$\widehat{G}_{k}(\alpha)$ with the corresponding deterministic $%
G_{k}(\alpha)$ values, to obtain
\[
Q_{L}(\alpha)=\frac{G_{2}(\alpha)G(\alpha)-G_{1}^{2}(\alpha)}{%
G^{2}(\alpha)}+\RMo_{p}(1),
\]
uniformly over $\alpha$. It is convenient to write%
\[
\frac{G_{2}(\alpha)G(\alpha)-G_{1}^{2}(\alpha)}{G^{2}(\alpha)}=\frac
{%
Q_{L}^{\mathrm{num}}(\alpha)}{Q_{L}^{\mathrm{den}}(\alpha)}.
\]
Let us start by studying $Q_{L}^{\mathrm{den}}(\alpha)$. We have, by using (\ref
{slog0}) with $s=0$ and $s=\alpha-\alpha_{0}$:
\begin{eqnarray*}
\frac{Q_{L}^{\mathrm{den}}(\alpha)}{L^{2 ( \alpha-\alpha_{0} ) }} &=& 
\frac{1}{L^{2 ( \alpha-\alpha_{0} ) }} \Biggl( \frac{1}{%
\sum_{l=1}^{L}(2l+1)}
\sum_{l=1}^{L}(2l+1)\frac{G_{0}l^{-\alpha_{0}}}{%
l^{-\alpha}}
\Biggr)^{2}
\\[-2pt]
&=&G_{0}^{2} \biggl( \frac{1}{ ( 1+({\alpha-\alpha_{0}})/{2} )^{2}}+
\RMo_{L}(1) \biggr).
\end{eqnarray*}
Consider now $Q_{L}^{\mathrm{num}}(\alpha)$, where we have:%
\begin{eqnarray*}
&& \frac{Q_{L}^{\mathrm{num}}(\alpha)}{L^{2 ( \alpha-\alpha_{0} ) }}
\\
&&\quad= \biggl( \frac{G_{0}L^{- ( \alpha-\alpha_{0} ) }}{%
\sum_{l=1}^{L} ( 2l+1 ) } \biggr)^{2}\\
&&\qquad{}\times \Biggl[ \Biggl( \sum
_{l=1}^{L} ( 2l+1 ) \frac{l^{-\alpha_{0}}}{l^{-\alpha
}}
\log^{2}l \Biggr) \Biggl( \sum_{l=1}^{L}(2l+1)
\frac{l^{-\alpha
_{0}}}{l^{-\alpha}}%
\Biggr) - \Biggl( \sum_{l=1}^{L}(2l+1)
\frac{l^{-\alpha_{0}}}{l^{-\alpha
}}%
\log l \Biggr)^{2} \Biggr]
\\
&&\quad=\frac{G_{0}^{2}}{L^{4+2 ( \alpha-\alpha_{0} ) }}\! \Biggl[\! \Biggl( \sum_{l=1}^{L}
( 2l+1 ) \frac{l^{-\alpha_{0}}}{l^{-\alpha
}}\log^{2}l \Biggr)\! \Biggl( \sum
_{l=1}^{L}(2l+1)\frac{l^{-\alpha
_{0}}}{l^{-\alpha}}%
\Biggr)\,{-}\,\Biggl( \sum_{l=1}^{L}(2l+1)
\frac{l^{-\alpha_{0}}}{l^{-\alpha
}}%
\log l \Biggr)^{\!\!2} \Biggr]
\\
&&\quad=G_{0}^{2} \biggl[ \frac{1}{4 ( 1+({\alpha-\alpha_{0}})/{2} )^{4}%
} \biggr] +
\RMo_{L} ( 1 )
\end{eqnarray*}
by using (\ref{sumLcorol}), $s=\alpha-\alpha_{0}$. Combining all
terms, we find that, uniformly over $\alpha$%
\begingroup
\abovedisplayskip=7pt
\belowdisplayskip=7pt
\[
Q_{L}(\alpha)=\frac{G_{0}^{2}({1}/({4 ( 1+({\alpha-\alpha_{0}})/{2%
} )^{4}}))+\RMo_{L}(1)}{G_{0}^{2} ( {1}/{ ( 1+({\alpha
-\alpha_{0}})/{2} )^{2}}+\RMo_{L}(1) ) }=\frac{1}{4 ( 1+
({%
\alpha-\alpha_{0}})/{2} )^{2}}+
\RMo_{L}(1).
\]
Finally, from the consistency result%
\[
\biggl( 1+\frac{\overline{\alpha}_{L}-\alpha_{0}}{2} \biggr)^{2}\stackrel{%
\mathbb{P}} {\longrightarrow}1,\qquad Q_{L}(\overline{\alpha}_{L})
\stackrel{\mathbb{P}} {\longrightarrow}\frac{1}{4}
\]
and thus, as claimed:%
\[
\frac{\sqrt{2}L}{4}\frac{S_{L}(\alpha_{0})}{Q_{L}(\overline{\alpha
}_{L})}%
\stackrel{d} {\longrightarrow}
\mathcal{N} ( -\sqrt{2}\kappa,1 ).
\]
\upqed
\end{pf}

In the \hyperref[app]{Appendix} we describe in details the results concerning the analysis
of fourth-order cumulants.\vspace*{-2pt}
%
\begin{remark}
\label{corr} In the statement of the previous theorem, we decided to report
normalization factors in the neatest possible form. A careful
inspection of
the proofs reveals however that the asymptotic result in (\ref
{carave2}) and
(\ref{carave3}) can be improved in finite samples introducing a correction
factor $c_{L}=\frac{1}{L}\sum_{l=1}^{L}\frac{\log l}{\log L}\rightarrow1$,
as $L\rightarrow\infty$, as follows%
\[
\frac{L}{4\log L\times c_{L}} ( \widehat{\alpha}_{L}-\alpha_{0} )
\longrightarrow_{p}\kappa
\]
under Condition \ref{A2}, and
\[
\frac{\sqrt{2}L}{4\times c_{L}} ( \widehat{\alpha}_{L}-\alpha_{0} )
\stackrel{d} {\longrightarrow}\mathcal{N} ( 0,1 ),
\]
under Condition \ref{A3}. Note that $c_{L}<1$ for all finite $L$, whence
the asymptotic bias and variance are slightly underestimated in Theorem
\ref%
{clt0}. For instance, the correction factors for $L=1000,2000,4000$ are,
respectively, $c_{1000}\simeq0.86$, $c_{2000}\simeq0.87$, and $%
c_{4000}\simeq0.88$.\vspace*{-2pt}
\end{remark}
%
\begin{remark}
Under Condition \ref{A2}, it is possible to implement consistent estimates
for the parameter~$\kappa$, with a slower rate of convergence. We leave
this issue as a topic for further research.\vspace*{-2pt}
\end{remark}

The previous result provides a sharp rate of convergence for the spherical
Whittle estimator. However in the general case the asymptotic bias term
$-\sqrt{2}\kappa$ is unknown, which makes inference unfeasible. To
address these issues, we shall consider in the next section an
alternative \textit{narrow-band} estimator (compare \cite{Robinson})
which achieves an unbiased limiting distribution, to the price of a log
factor in the rate of convergence.\vspace*{-3pt}
\endgroup

\section{Narrow-band estimates} \label{narrowband}\vspace*{-3pt}

In the previous section, we have shown that under Conditions \ref{A1},
\ref{A2}, it is possible to establish a rate of convergence for the
spherical\vadjust{\goodbreak}
Whittle estimates; however, due to the presence of an asymptotic bias term,
statistical inference turned out to be unfeasible. The purpose of this
section is to propose a narrow band estimator allowing for feasible
inference under broad circumstances. We start from the following definition.
%
\begin{definition}
The \textup{Narrow--Band Spherical Whittle estimator} for the parameters $%
\vartheta=(\alpha,G)$ is provided by%
\[
(\widehat{\alpha}_{L;L_{1}},\widehat{G}_{L;L_{1}}):=\arg
\min_{\alpha
,G}\sum_{l=L_{1}}^{L} \biggl\{
(2l+1)\frac{\widehat{C}_{l}}{Gl^{-\alpha}} 
-(2l+1)\log\frac{\widehat{C}_{l}}{Gl^{-\alpha}} \biggr\}
\]
or equivalently%
%
\begin{eqnarray}
\label{narrowest} \widehat{\alpha}_{L;L_{1}} &=&\arg\min_{\alpha}R_{L;L_{1}}
\bigl(\alpha, \widehat{G}(\alpha)\bigr),
\nonumber\\[-8pt]\\[-8pt]
R_{L;L_{1}}\bigl(\alpha,\widehat{G} ( \alpha ) \bigr) &=& \Biggl( \log
\widehat{G}_{L;L_{1}}(\alpha)-\frac{\alpha}{\sum_{l=L_{1}}^{L}(2l+1)}%
\sum
_{l=L_{1}}^{L}(2l+1)\log l \Biggr),
\nonumber
\end{eqnarray}
where $L_{1}<L$ is chosen such that%
\[
L-L_{1}\rightarrow\infty,\qquad \frac{L}{L_{1}}=1+\RMO\biggl(
\frac{1}{\log L}\biggr)%
\qquad\mbox{as }L\rightarrow\infty.
\]
\end{definition}

We can write%
\[
L_{1}=L \bigl( 1-g ( L ) \bigr),
\]
where
\[
g ( L ) =g (
L;L_{1} ) =1-\frac{L_{1}}{L}=\RMO\biggl(\frac{1}{\log L}\biggr),\qquad
\lim_{L\rightarrow\infty}\bigl(L\times g(L)\bigr)=\infty.
\]
%
\begin{theorem}
\label{theonarrowband}Let $\widehat{\alpha}_{L;L_{1}}$ defined as in
(\ref%
{narrowest}). Then under Condition \ref{A2} we have
\[
\frac{L\cdot\sqrt{g^{3} ( L ) }}{\sqrt{12}} ( \widehat {\alpha}%
_{L;L_{1}}-
\alpha_{0} ) \stackrel{d} {\longrightarrow}\mathcal {N} ( 0,1 ).
\]
\end{theorem}
\begin{pf}
The proof of the consistency for $\widehat{\alpha}_{L;L_{1}}$ can be
carried out analogously to the argument provided in Section \ref%
{asconsistency}, and hence is omitted for brevity's sake. The proof for the
central limit theorem can also be carried along the same lines as done
earlier, noting in particular that for the form (\ref{reg2}) of $C_{l}$
under Condition \ref{A2}%
\begin{eqnarray*}
\mathbb{E}\overline{S}_{L;L_{1}}(\alpha_{0}) &=&
\frac{1}{%
\sum_{l=L_{1}}^{L}(2l+1)}\sum_{l=L_{1}}^{L}(2l+1) \{
\log l \} \biggl\{ \frac{C_{l}}{G_{0}l^{-\alpha_{0}}}-\frac{\widehat
{G}_{L;L_{1}}}{%
G_{0}} \biggr\}
\\
&=&\frac{\kappa}{\sum_{l=L_{1}}^{L}(2l+1)}\sum_{l=L_{1}}^{L}
\biggl[ (2l+1)%
\frac{\log l}{l}-\frac{\sum_{l=L_{1}}^{L}(2+{1}/{l})}{%
\sum_{l=L_{1}}^{L}(2l+1)} \biggr]
\\
&=&\kappa\frac{\log L_{1}}{L_{1}}+\RMo \biggl( \frac{\log
L_{1}}{L_{1}} \biggr)\\
&=&\RMO
\biggl(\frac{\log L_{1}}{L_{1}}\biggr)
\end{eqnarray*}
and
\begin{eqnarray*}
L\cdot\sqrt{g^{3} ( L ) }\mathbb{E} \bigl[ \overline{S}%
_{L;L_{1}}(
\alpha_{0}) \bigr] &=&\RMO \biggl( \frac{\log L_{1}}{L_{1}} \biggr)
\frac{L}{\log^{{3}/{2}}L}\\
&=&\RMO\biggl(\frac{L}{L_{1}}\biggr)\RMO \biggl( \frac{\log L}{\log^{{3}/{2}}L}
\biggr)\\
&=&\RMO \biggl( \frac{1}{\log^{{1}/{2}}L} \biggr) =\RMo_{L}(1).
\end{eqnarray*}
On the other hand%
%
\begin{eqnarray}
\label{narrowvar} && \operatorname{Var} \bigl\{ \overline{S}_{L;L_{1}}(
\alpha_{0}) \bigr\}
\nonumber
\\
&&\quad=\frac{1}{ [ \sum_{l=L_{1}}^{L}(2l+1) ]^{2}}\operatorname{Var} \Biggl\{ \sum_{l=L_{1}}^{L}(2l+1)
\{ \log l \} \biggl( \frac{\widehat
{C}_{l}}{%
G_{0}l^{-\alpha_{0}}}-\frac{\widehat{G}_{L;L_{1}} ( \alpha )
}{%
G_{0}} \biggr) \Biggr\}
\nonumber\\[-8pt]\\[-8pt]
&&\quad=\frac{2}{ [ \sum_{l=L_{1}}^{L}(2l+1) ]^{3}} \Biggl( \sum_{l=L_{1}}^{L}(2l+1)
\sum_{l=L_{1}}^{L}(2l+1) \bigl\{
\log^{2}l \bigr\} - \Biggl( \sum_{l=L_{1}}^{L}(2l+1)
\{ \log l \} \Biggr)^{2} \Biggr)\qquad
\nonumber\\
&&\quad=\frac{2}{ [ \sum_{l=L_{1}}^{L}(2l+1) ]^{3}}Z_{L;g (
L ) } ( 0 )
\nonumber
\end{eqnarray}
by using (\ref{varG}) and (\ref{covarG}) and following the notation of
Proposition \ref{sumLintegralscomplete} with $s=0$.

Proposition \ref{sumLintegralscomplete} leads to:
\[
\tfrac{1}{4}Z_{L;g ( L ) }=\tfrac{1}{3}g^{4} ( L )
L^{4}+\RMo \bigl( g^{4} ( L ) L^{4} \bigr),
\]
while
\[
\Biggl[ \sum_{l=L_{1}}^{L}(2l+1)
\Biggr]^{3}= \bigl( L^{2}-L_{1}^{2}
\bigr)^{3}=8L^{6}g^{3} ( L ) +\RMo_{L}
\bigl( L^{6}g^{3} ( L ) \bigr).
\]
By substituting these results in (\ref{narrowvar}), we obtain
\[
\operatorname{Var} \bigl\{ \overline{S}_{L;L_{1}}(\alpha_{0}) \bigr\} =
\frac{g (
L ) }{12L^{2}}=\frac{1}{12L^{2}\log ( L ) }.
\]
Rewrite now the term $Q_{L_{1}L}(\alpha)$ as%
\[
Q_{L_{1};L}(\alpha)=\frac{Q_{\dot{L}_{1};L}^{\mathrm{num}}(\alpha)}{Q_{\dot{L}%
_{1};L}^{\mathrm{den}}(\alpha)},
\]
where we have:%
\begin{eqnarray*}
Q_{L_{1};L}^{\mathrm{num}}(\alpha)&=&\frac{G_{0}^{2}}{ ( \sum_{l=1}^{L} (
2l+1 )  )^{2}}Z_{L,g ( L ) } (
s ),
\\
Q_{L_{1};L}^{\mathrm{den}}(\alpha)&=&G_{0}^{2} \biggl(
\frac{1}{\sum_{l=L_{1}}^{L}(2l+1)}%
\biggr)^{2} \Biggl( \sum
_{l=L_{1}}^{L}(2l+1)l^{s}
\Biggr)^{2},
\end{eqnarray*}
where $s=\alpha-\alpha_{0}$.

From (\ref{slog0}) and (\ref{venerdi}), we have%
\begin{eqnarray*}
Q_{L_{1};L}^{\mathrm{den}}(\alpha) &=&\frac{G_{0}^{2}}{ ( 1+
{s}/{2} )^{2}}
\frac{L^{4 ( 1+{s}/{2} ) } ( 1- ( 1-g (
L )  )^{2 ( 1+{s}/{2} ) } )^{2}}{L^{4} (
1- ( 1-g ( L )  )^{2} )^{2}}+\RMo_{L} ( 1 )
\\
&=&\frac{4G_{0}^{2}L^{2s}g^{2} ( L ) }{ ( 1- ( 1-g (
L )  )^{2} )^{2}}+\RMo_{L} ( 1 ).
\end{eqnarray*}

Consider now $Q_{L_{1};L}^{\mathrm{num}}(\alpha)$, where we have:%
\[
Q_{L_{1};L}^{\mathrm{num}}(\alpha)=G_{0}^{2}
\frac{L^{2s}g^{4} ( L )
K (
s ) }{ ( 1- ( 1-g ( L )  )^{2} )^{2}}%
+\RMo_{L} ( 1 ).
\]
Combining the two results, we obtain:%
\[
\lim_{L\rightarrow\infty}Q_{L_{1};L}(\alpha)=\frac{g^{2} (
L )
K ( s ) }{4}.
\]
Finally, from the consistency results, we have:%
\[
\frac{12}{g^{2} ( L ) }Q_{L_{1};L}(\overline{\alpha })\rightarrow_{p}1.
\]

The analysis of fourth-order moments is exactly the same as in the previous
section, and the result follows accordingly.
\end{pf}
%
\begin{remark}
It should be noted that an asymptotic unbiased estimator is obtained with
the loss of only a logarithmic term to the power $3/2$ in the rate of
convergence. This result highlights the fact that for spherical random
fields the highest order multipoles have a dominating role in the estimation
procedure. This is a consequence of the peculiar features of Fourier
analysis under isotropy -- the number of random spherical harmonic
coefficients grows linearly with the order of the multipoles.
\end{remark}
%
\begin{remark}
A careful inspection of the proof reveals that, in case it is assumed that
the scale parameter $G=G_{0}$ is known, a faster rate of convergence
results. This is consistent with results from \cite{wu}, where stationary
Gaussian processes on $\mathbb{R}^{d}$ are considered and asymptotic
Gaussianity for the spectral index and the scale parameters are separately
established.
\end{remark}

\section{Estimation with noise} \label{noise}

The previous sections have been developed under an overly simplified
assumption, that is, the condition that the random spherical harmonic
coefficients $ \{ a_{lm} \} $ can be observed without noise. Of
course, this assumption is untenable under realistic experimental
circumstances. The purpose of the present section is to show how our
approach can be extended to cope with noise. More precisely, and following
earlier work by \cite{polenta,glm} (see also \cite{marpecbook}), we
shall assume that observations the observed spherical field takes the form
\[
\RMO ( x ):=T ( x ) +N ( x ),\qquad x\in\mathbb{%
S}^{2},
\]
where $N ( x ) $ is taken to be a zero-mean, square-integrable,
isotropic random field representing noise, which is Gaussian and independent
from the signal $T(x)$. The spherical harmonic coefficients then become%
\[
a_{lm}=\int_{\mathbb{S}^{2}}\RMO ( x ) \overline{Y}_{lm}
( x ) \mrmd x=a_{lm}^{T}+a_{lm}^{N},
\]
where the set $ \{ a_{lm}^{T},a_{lm}^{N} \} $ are associated,
respectively, to the random field $T ( x ),N(x)$. More precisely
%
\begin{condition}
\label{gammacond}The random field $N ( x ) $ is Gaussian and
isotropic, independent form $T ( x ) $ and with angular power
spectrum
\[
C_{N,l}=G_{N}l^{-\gamma},\qquad \gamma>2,
G_{N}>0.
\]
\end{condition}

Clearly%
\[
\widehat{C}_{l}=\frac{1}{2l+1} \Biggl[ \sum
_{m=-l}^{l}\bigl\llvert a_{lm}^{T}
\bigr\rrvert^{2}+\sum_{m=-l}^{l}
\bigl\llvert a_{lm}^{N}\bigr\rrvert^{2}+2\Re
\Biggl( \sum_{m=-l}^{l}a_{lm}^{T}
\overline{a}_{lm}^{N} \Biggr) %
\Biggr],
\]
so that%
\[
\mathbb{E} ( \widehat{C}_{l} ) =C_{T,l}+C_{N,l},\qquad
\operatorname{Var} ( \widehat{C}_{l} ) =\frac{2}{2l+1} \bigl(
C_{T,l}^{2}+C_{N,l}^{2} \bigr).
\]
The naive estimator $ \{ \widehat{C}_{l} \} $ is then biased for
the power spectrum of interest $ \{ C_{T,l} \} $. In the
cosmological literature, this issue is addressed by two alternative methods:
\begin{itemize}
\item(A) For most experimental set-ups, it may be reasonable to assume that
the angular power spectrum is known a priori, and hence can be
subtracted from the data. This leads to the so-called \textit{auto-power
spectrum} estimator.

\item(B) Most experiments in a CMB framework are actually
\textit{multi-channel}, that is, they provide a vector of observations,
such that the signal $a_{lm}^{T}$ is constant across all components,
while noise is independent from one component to the other. This leads
easily to an unbiased estimator even without the assumption that the
noise angular power spectrum is known in advance -- this estimator is
known as the \textit{cross-power spectrum.}
\end{itemize}

A detailed comparison among the two estimators and consistent tests on
the functional form of the noise power spectrum are again discussed in
\cite{polenta,glm} (see also \cite{marpecbook}, Chapter 8.3). Here, for
brevity and notational simplicity we shall focus on case (A), that is,
on the unbiased estimator:
\begin{eqnarray*}
\widetilde{C}_{l} &=&\frac{1}{2l+1}\sum
_{m=-l}^{l} \bigl[ \bigl\llvert a_{lm}^{T}+a_{lm}^{N}
\bigr\rrvert^{2} \bigr] -C_{l}^{N}
\\
&=&\frac{1}{2l+1}\sum_{m=-l}^{l} \Biggl[
\bigl\llvert a_{lm}^{T}\bigr\rrvert^{2}+\bigl
\llvert a_{lm}^{N}\bigr\rrvert^{2}+2\Re \Biggl(
\sum_{m=-l}^{l}a_{lm}^{T}
\overline{a}_{lm}^{N} \Biggr) \Biggr] -C_{N,l},
\end{eqnarray*}
where $\mathbb{E} ( \widetilde{C}_{l} ) =C_{T,l}$ and
\begin{eqnarray*}
\operatorname{Var} \biggl( \frac{\widetilde{C}_{l}}{C_{T,l}} \biggr) &=&\frac
{2}{2l+1} \biggl(
1+%
\frac{C_{N,l}^{2}}{C_{T,l}^{2}}+2\frac{C_{N,l}}{C_{T,l}} \biggr)
\\
&=&\frac{2}{2l+1} \biggl( \biggl( 1+ \biggl( \frac{G_{N}}{G_{0}} \biggr)
l^{- ( \gamma-\alpha_{0} ) } \biggr)^{2}+\RMO \bigl( l^{-\min
(
2 ( \gamma-\alpha_{0} ), ( \gamma-\alpha_{0} )
) } \bigr)
\biggr).
\end{eqnarray*}
%
\begin{remark}
There are three asymptotic regimes for the behaviour of $\operatorname{Var} (
\widetilde{C}_{l}/C_{T,l} )$:
\end{remark}
\begin{enumerate}
\item$\alpha_{0}<\gamma$, where
\[
\operatorname{Var} \biggl( \frac{\widetilde{C}_{l}}{C_{T,l}} \biggr) =\frac{2}{2l+1} \bigl( 1+\RMO
\bigl( l^{- ( \gamma-\alpha_{0} ) } \bigr) \bigr).
\]

\item$\alpha_{0}=\gamma$, where
\[
\operatorname{Var} \biggl( \frac{\widetilde{C}_{l}}{C_{T,l}} \biggr) =\frac{2}{2l+1} \biggl( \biggl( 1+
\frac{G_{N}}{G_{0}} \biggr)^{2}+\RMO \bigl( l^{-1} \bigr)
\biggr).
\]

\item$\alpha_{0}>\gamma$, so that
\[
\operatorname{Var} \biggl( \frac{\widetilde{C}_{l}}{C_{T,l}} \biggr) =\frac{2}{2l+1} \bigl(
G_{N}^{2}l^{-2 ( \gamma-\alpha_{0} ) }+\RMO \bigl( l^{-\min (
2\alpha_{0}, ( \gamma+\alpha_{0} )  ) }
\bigr) \bigr).
\]
\end{enumerate}

In the first case the presence of instrumental noise is asymptotically
negligible and the results of the previous sections will remain unaltered.
As before, we define:%
%
\begin{eqnarray}
\label{noiseest} \widetilde{G}_{L}&=&\frac{1}{\sum_{l=1}^{L}(2l+1)}\sum
_{l=1}^{L}(2l+1)\frac{%
\widetilde{C}_{l}}{l^{-\alpha}};
\nonumber\\[-8pt]\\[-8pt]
\widetilde{\alpha}_{L}&=&\arg\min_{\alpha>2}R_{L}^{\mathrm{noise}}(
\alpha ),
\nonumber
\end{eqnarray}
where%
\[
R_{L}^{\mathrm{noise}}(\alpha)=\log\frac{1}{\sum_{l=1}^{L}(2l+1)}%
\sum
_{l=1}^{L}(2l+1)\frac{\widetilde{C}_{l}}{l^{-\alpha}}-
\frac{\alpha
}{%
\sum_{l=1}^{L}(2l+1)}\sum_{l=1}^{L}(2l+1)\log
l.
\]

The proof of the consistency of the estimator $\widetilde{\alpha}_{L}$
follows strictly the argument that was provided above in the noiseless case.
Indeed, for $\alpha_{0}<\gamma$ noise is asymptotically negligible, and
all proofs are basically unaltered; for $\alpha_{0}\geq\gamma+1$
consistency can no longer be established. Finally, for $\gamma<\alpha_{0}<\gamma+1$ the arguments go through with some changes in the
convergence rates; details are provided in the \hyperref[app]{Appendix}.
%
\begin{theorem}
\label{theonoise}Let $\widetilde{\alpha}_{L}$ defined as in (\ref
{noiseest}%
). Then under Conditions \ref{A2} and \ref{gammacond}, we have for
$\gamma
>\alpha_{0}-1$
\[
\frac{L}{4\log L} ( \widetilde{\alpha}_{L}-\alpha_{0} )
\longrightarrow_{p}-\kappa.
\]
If moreover Condition \ref{A3} holds, we have that,
\begin{eqnarray*}
\frac{\sqrt{2}L}{4} ( \widetilde{\alpha}_{L}-\alpha_{0} )
&\stackrel{d} {\longrightarrow}&\mathcal{N} ( 0,1 ) \qquad\mbox{for }
\alpha_{0}<\gamma;
\\
\frac{\sqrt{2}L}{4} \biggl( 1+\frac{G_{N}}{G_{0}} \biggr)^{2} (
\widetilde{%
\alpha}_{L}-\alpha_{0} ) &
\stackrel{d} {\longrightarrow}&\mathcal {N}%
( 0,1 ) \qquad\mbox{for }
\alpha_{0}=\gamma;
\\
L^{1- ( \alpha_{0}-\gamma ) }\frac{\sqrt{2}}{4\sqrt{H (
\alpha_{0}-\gamma ) }} \biggl( \frac{G_{0}}{G_{N}} \biggr) (
\widetilde{\alpha}_{L}-\alpha_{0} ) &\stackrel{d} {
\longrightarrow}& \mathcal{N} ( 0,1 ) \qquad\mbox{for }\gamma<
\alpha_{0}<\gamma+1,
\end{eqnarray*}
where%
\[
H ( u ):= \biggl( \frac{7+4u+u^{2}}{4 ( 1+u )^{3}} \biggr).
\]
\end{theorem}

The rate of convergence and the asymptotic variance of $\widetilde
{\alpha}%
_{L}$, for example, $L^{1- ( \alpha_{0}-\gamma ) }$ depend on the unknown
parameters $\alpha_{0},G_{0}$. However, these unknown values can be
replaced by their consistent estimates, with no effect on the asymptotic
results; indeed it is easily seen that, for instance,
\[
L^{1- ( \widetilde{\alpha}_{L}-\gamma ) }\frac{\sqrt {2}}{4\sqrt{%
H ( \widetilde{\alpha}_{L}-\gamma ) }} \biggl( \frac{\widetilde
{G}%
_{0}}{G_{N}} \biggr) (
\widetilde{\alpha}_{L}-\alpha_{0} ) \stackrel{d} {
\longrightarrow}\mathcal{N} ( 0,1 ) \qquad\mbox{for }%
\gamma<
\alpha_{0}<\gamma+1,
\]
because $ ( \widetilde{\alpha}_{L}-\alpha_{0} ) =\RMo_{p}(\log L)$,
whence the result follows by noting that%
\[
\frac{G_{N}}{\widetilde{G}_{L} ( \alpha_{0} ) }\rightarrow_{p}%
\frac{G_{N}}{G_{0}},\qquad
\frac{L^{1- ( \widetilde{\alpha}%
_{L}-\gamma ) }\sqrt{H ( \alpha_{0}-\gamma )
}}{L^{1- (
\alpha_{0}-\gamma ) }\sqrt{H ( \widetilde{\alpha}_{L}-\gamma
) }}\rightarrow_{p}1\qquad\mbox{as }L\rightarrow\infty.
\]
Analogous extensions to address observational noise can be considered for
the narrow-band estimators; this case is omitted, however, for brevity's
sake.

\section{Numerical results} \label{numerical}

In this section, we present some numerical evidence to support the asymptotic
results provided earlier. More precisely, using the statistical
software R,
for given fixed values of $L$, $\alpha_{0}$ and $G_{0}$ and the alternative
conditions discussed in the previous section, we sample random values for
the angular power spectra $\widehat{C}_{l}$ and we implement standard and
narrow-band estimates. We start by analyzing the simplest model, that
is, the
one corresponding to Condition \ref{A3}. Here we fixed $G_{0}=2$. In Figure
\ref{figur1}, we report the distribution of $\widehat{\alpha}_{L}-\alpha_{0}$
normalized by a factor $\sqrt{2}L/4$. In Table \ref{tabl1}, we report instead the
sample frequencies corresponding to the quantiles $%
q=0.05,0.25,0.50.0.75,0.95 $ for a $\mathcal{N} ( 0,1 ) $
distribution.

%
\begin{figure}

\includegraphics{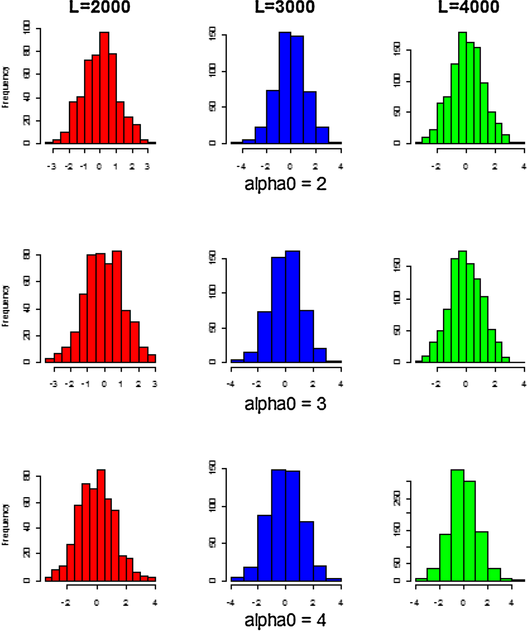}

\caption{Distribution of normalized $ ( \widehat{\alpha
}_{L}-\alpha_{0} ) $ by varying $L$ and $\alpha_{0}$, under
Condition \protect\ref{A3}.}\label{figur1}
\end{figure}

Table \ref{tabl2} provides the results for the classical Shapiro--Wilk
Gaussianity test performed on simulations obtained by varying
$\alpha_{0}$ and the number of multipoles $L$. Asymptotic Gaussianity is
clearly supported.

Let us now focus on the more general Condition \ref{A2}. Figure \ref{figur2} represents
the empirical distribution of $ ( L/4\sqrt{2}\log L )  (
\widehat{\alpha}_{L}-\alpha_{0} ) $ in case $\alpha_{0}=3$,
$\kappa
=1$ and the corresponding narrow-band estimates, whose results are
summarized in Table \ref{tabl3}. The improvement in the bias factor with the latter
procedure is immediately evident.

Once more, asymptotic Gaussianity is strongly supported by the Shapiro--Wilk
test, see again Table \ref{tabl3}.

Considering the correction term $c_{L}$ from Remark \ref{corr}, the sample
bias is consistent with the asymptotic value to three decimal digits.

In Figure \ref{figur3}, we report the results obtained on a set of simulations under
Condition \ref{A1}, where we have:
\[
G(l)=G_{0} \biggl\{ 1+\frac{1}{l}-\frac{1}{l^{2}} \biggr\}
\]
with $G_{0}=2$, $\alpha_{0}=4$, $L=4000$, $L_{1}=3750$.

\begin{table}
\tablewidth=280pt
\caption{Quantiles of $L/4\sqrt{2}\log L ( \widehat{\alpha
}_{L}-\alpha_{0} ) $}\label{tabl1}
\begin{tabular*}{\tablewidth}{@{\extracolsep{\fill}}lllllllll@{}}
\hline
& & \multicolumn{7}{l@{}}{Sample frequencies} \\[-4pt]
& & \multicolumn{7}{l@{}}{\hrulefill} \\
\multicolumn{1}{@{}l}{$\alpha_{0}$} & \multicolumn{1}{l}{$L$}
& $-1.96$ & $-1$ & $-0.68$ & $0$ & $0.68$
& $1 $ & $1.96$ \\
\hline
2 & 2000
& 4 & 19.2 & 29.2 & 48.6 & 22.8 & 14.2 & 4 \\
& 3000
& 4.5 & 18.4 & 26.8 & 51 & 23.33 & 14.36 & 3.6 \\
& 4000 & 4.4 & 17.7 & 25.2 & 49.1 & 23.43 & 13.87 & 4.8 \\
[4pt]
3 & 2000
& 4.4 & 19.2 & 29.2 & 51.5 & 24.03 & 15.17 & 3.6 \\
& 3000 & 4.3 & 18.4 & 26.8 & 48.9 & 23.2 & 13.43
& 3.8
\\
& 4000
& 4.2 & 17.9 & 26.4 & 50.8 & 23.07 & 14.13 & 3.7 \\
[4pt]
4 & 2000
& 4.4 & 21.6 & 30.2 & 50.9 & 22.73 & 14.94 & 5.5 \\
& 3000 & 4.2 & 21.2 & 29.8 & 50.4 & 25.07 & 15.87 & 4.3 \\
& 4000
& 4.2 & 17.9 & 27.1 & 50.4 & 22.7 & 13.73 & 4.2 \\
\hline
\end{tabular*}
\end{table}

\begin{table}
\tablewidth=280pt
\caption{Shapiro--Wilk test under Condition \protect\ref{A3}}\label{tabl2}
\begin{tabular*}{\tablewidth}{@{\extracolsep{4in minus 4in}}llll@{}}
\hline
& & \multicolumn{2}{l@{}}{Shapiro--Wilk test} \\[-4pt]
& & \multicolumn{2}{l@{}}{\hrulefill}\\
$\alpha_{0}$ & $L$ & $W$ & $p$-value \\
\hline
2 & 2000 & 0.9976 & 0.685 \\
& 3000 & 0.9978 & 0.667 \\
& 4000 & 0.9983 & 0.373 \\
[4pt]
3 & 2000 & 0.9976 & 0.691 \\
& 3000 & 0.9980 & 0.842 \\
& 4000 & 0.9985 & 0.945 \\
[4pt]
4 & 2000 & 0.9987 & 0.670 \\
& 3000 & 0.998 & 0.286 \\
& 4000 & 0.9985 & 0.578 \\
\hline
\end{tabular*}
\end{table}

\begin{figure}[b]

\includegraphics{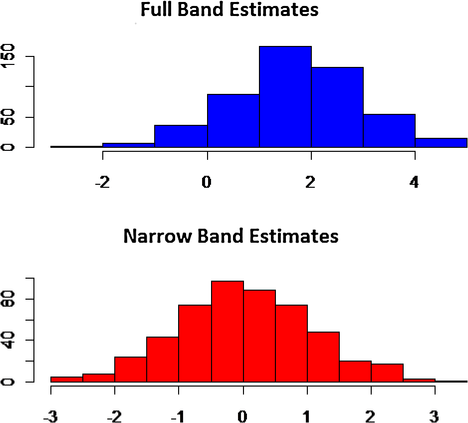}

\caption{Comparison among biased and narrow estimates ($\kappa=1$,
$L=2000$, $\alpha_{0}=3$), under Condition \protect\ref{A2}.}
\label{figur2}
\end{figure}

\begin{table}
\def\arraystretch{0.9}
\tablewidth=280pt
\caption{Normalized Narrow bands data, under Condition \protect\ref
{A2}, $\kappa=1$, $\alpha_{0}=4$, $G_{0}=2$}\label{tabl3}
\begin{tabular*}{\tablewidth}{@{\extracolsep{4in minus 4in}}lld{2.4}lll@{}}
\hline
& & & & \multicolumn{2}{l@{}}{Shapiro--Wilk test} \\[-4pt]
& & & & \multicolumn{2}{l@{}}{\hrulefill} \\
$L$ & $L_{1}$ &
\multicolumn{1}{l}{\hspace*{-2pt}Mean} & Var & $W$ & $p$-value\\
\hline
2000 & 1550 & 0.072 & 0.959 & 0.9985 & 0.950 \\
& 1700 & 0.018 & 0.951 & 0.997 &
0.495 \\
& 1850 & -0.016 & 1.004 & 0.9977 & 0.739 \\
[4pt]
3000 & 2400 & 0.092 & 1.130 & 0.9949 & 0.920 \\
& 2600 & 0.072 & 0.928 & 0.9951 &
0.745 \\
& 2800 & -0.02 & 1.06 & 0.9965 & 0.340 \\
[4pt]
4000 & 3250 & 0.006 & 0.985 & 0.9968 & 0.443 \\
& 3500 &
0.004 & 1.097 & 0.998 & 0.834 \\
& 3750 & 0.0007 & 1.073 & 0.9982 & 0.874 \\
\hline
\end{tabular*}
\end{table}

\begin{figure}[b]

\includegraphics{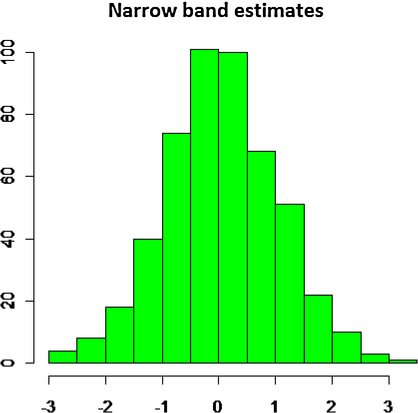}

\caption{Distribution of normalized $ ( \widehat{\alpha
}_{L}-\alpha_{0} ) $ under
Condition \protect\ref{A1}.}
\label{figur3}
\end{figure}

We obtain a mean value $\mathbb{E} ( \widehat{\alpha}_{L}-\alpha_{0} ) =0.040$ and a normalized variance of $0.9918$. Shapiro--Wilk
Gaussianity test gives as result $W=0.9981$ with a $p$-$\mbox{value}=0.8669$.
Table~\ref{tabl4}
compares sample variance, bias and mean squared errors obtained for
simulations with different values of $L$, $\kappa$ and $\alpha_{0}$
with $%
N=5000$ iterations.

\begin{table}
\caption{Sample Variance, Bias and MSE of estimators
$\widehat{\alpha}_{L}$ and $\widehat{\alpha}_{L_{1},L}$ for different
values of $L=1000,2000,5000,10\,000$ and $\kappa=1,2$ ($\alpha_{0}=3$)}\label{tabl4}
\begin{tabular*}{\tablewidth}{@{\extracolsep{\fill}}llllllll@{}}
\hline
%
\multicolumn{1}{@{}l}{$\kappa$} & \multicolumn{1}{l}{Band} & Var &
Bias &
\multicolumn{1}{l}{MSE} & Var & \multicolumn{1}{l}{Bias} &
\multicolumn{1}{l@{}}{MSE} \\
\hline
\multicolumn{4}{@{}l}{$L=1000$} & &
\multicolumn{3}{l@{}}{$L=5000$}\\[4pt]
1 & Full & $7.9\cdot10^{-6}$
& $0.004$ & $2.4\cdot10^{-5}$ & $3.2\cdot10^{-7}$ & $0.0008$ &
$9.7\cdot
10^{-7}$ \\
& Nar. & $1.4\cdot10^{-4}$ & $%
0.001$ & $1.5\cdot10^{-4}$ & $5.4\cdot10^{-5}$ & $0.0003$ & $5.4\cdot
10^{-5}$ \\
[4pt]
2 & \multicolumn{1}{l}{Full} & $8.0\cdot10^{-6}$
& $0.008$ & $7.1\cdot10^{-5}$ & $3.3\cdot10^{-7}$ & $0.002$ &
$6.1\cdot
10^{-6}$ \\
& Nar. & $1.4\cdot
10^{-4}$ & $%
0.002$ & $1.5\cdot10^{-4}$ & $5.3\cdot10^{-5}$ & $0.0006$ & $5.4\cdot
10^{-5}$ \\
%
[6pt]
\multicolumn{4}{@{}l}{$L=2000$} & &
\multicolumn{3}{l@{}}{$L=10\,000$}\\
[6pt]
1 & Full &
$1.9\cdot10^{-6}$ & 0.002 &
$5.8\cdot10^{-6}$ & $8.1\cdot10^{-8}$
& 0.0004 & $2.4\cdot10^{-7}$ \\
& Nar. & $9.6\cdot10^{-5}$ & 0.0005 &
$9.6\cdot10^{-5}$ & $1.3\cdot10^{-5}$ &
$9\cdot10^{-5}$ & $1.3\cdot10^{-5}$ \\
[4pt]
2 & Full & $1.9\cdot10^{-6}$ & 0.004 &
$1.8\cdot10^{-5}$ & $8.1\cdot
10^{-8}$ & 0.0008 & $2.4\cdot
10^{-7}$ \\
& Nar. & $9.6\cdot10^{-5}$
& 0.001 & $9.6\cdot10^{-5}$ & $1.3\cdot10^{-5}$ &
0.0002 & $1.3\cdot10^{-5}$\\
\hline
\end{tabular*}
\end{table}

The simulations show that full-band estimators is characterized by a smaller
MSE with respect to the corresponding narrow band estimators obtained
on the
same data sets, due to the smallest value of the variance. Hence, full band
estimates seem to be more efficient than the narrow band ones, although they
appear to be more robust. Note that for the sake of the brevity we report
only the data concerning $\alpha_{0}=3$, because data obtained for
$\alpha_{0}=2,4$ lead to very similar results.

%
\begin{figure}[b]

\includegraphics{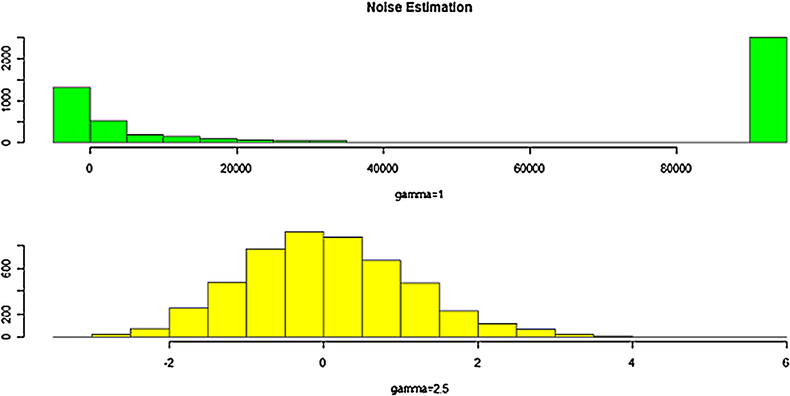}

\caption{Distribution of normalized $\widetilde{\alpha}-\alpha_{0}$
($\alpha_{0}=3$, $L=1000$) in presence of noise ($\gamma=1$ and~$2.5$).}
\label{figur4}
\end{figure}

In Figure \ref{figur4}, we report results on simulations (iterated $N=5000$ times)
which take in account also the presence of the noise, using $\alpha_{0}=3$,
$L=1000$ and by varying the value of $\gamma$. In these simulations, we
consider four cases. In the cases $\gamma=5$ and $\gamma=3$, the
results obtained put in evidence that in the case $\gamma>\alpha_{0}$ the
noise does not affect the signal detected (we omit these results in the
figure). If instead $\gamma=2.5$, we obtain the convergence of the
estimator to $\alpha_{0}$ with the rate of convergence as described in
Theorem~\ref{theonoise}: in this case $\mathbb{E} ( \widetilde
{\alpha}%
_{L} ) =0.005$, while the variance of the normalized $\widetilde
{\alpha
}_{L}$ corresponds to 1.22. Shapiro--Wilk normality test provides $W=0.9919$
with $p$-$\mbox{value}=2.68\cdot10^{-16}$. Finally, if $\gamma=1$ (and then
$\gamma
<\alpha_{0}-1$) the estimate computed assumes mainly values close to $%
\alpha_{\max}$, the highest value which is allowed by the computational
point of view (in the figure $\alpha_{\max}=50$), hence it seems to
diverge.\vadjust{\goodbreak}

\section{Conclusions} \label{conclusions}

We view this paper as a first contribution in an area which deserves much
further research, that is, the investigation of asymptotic properties for
parametric estimators on a single realization of an isotropic random field
on the sphere. As mentioned earlier, an enormous amount of applied papers
have focussed on this issue, especially in a Cosmological framework,
but no
rigorous results seem currently available. Our results suggest that
consistency and asymptotic Gaussianity are feasible for spectral index
estimators, the rate of convergence being $L/\log L$; these estimates are
centred on zero in ``parametric''
circumstances, that is, where the correct model being provided for
$C_{l}$ up to
a factor $\RMo(\frac{1}{l})$. When the latter assumption fails, alternatively,
narrow-band estimates can be entertained; these estimates ensure convergence
to a zero-mean Gaussian distribution, with a slightly slower
convergence rate.

Many questions are left open by these results. The first we mention is
the characterization of a whole class of parameters for which
asymptotic Gaussianity and consistency may continue to hold. More
challenging is the possibility to relax the Gaussian assumption and
consider more general, finite-variance isotropic Gaussian fields. In
this respect, results in \cite{marpec2} suggest that the Gaussianity
assumption may indeed play a crucial role, as high-frequency
consistency and Gaussianity seem very tightly related, for instance,
when considering the asymptotic behavior of the angular power spectrum.
It seems also important to explore the connection between the spherical
estimates we have been considering and fixed-domain asymptotic results
for Matern-type covariances, as discussed on $\mathbb
{R}%
^{d}$ by \cite{loh,anderes,wang,wu} and others.
Likewise, the high-frequency behaviour of Bayesian estimates definitely
deserves some investigation in this framework, especially considering the
growing interest for Bayesian techniques in the astrophysical community.

For future work, we aim at relaxing some of the assumptions introduced in
this paper to make these techniques more directly applicable on existing
datasets. The harmonic estimates we have been focussing on require the
observation of the random field on the full sphere. This condition often
fails in practice: for instance, in a Cosmological framework large regions
of the sky are not observable, because they are masked by Foreground sources
such as the Milky Way. In ongoing research (see \cite{durastanti}), we are
hence considering a Whittle-type estimator based on spherical wavelets
(needlets, see \cite{npw1,mpbb08,bkmpAoS}), rather than
standard Fourier analysis. These estimates have, however, a larger
asymptotic variance than the Fourier methods considered here; in a sense,
this is an instance of the standard trade-off between robustness and
efficiency. Thus, the material in the present paper presents a
benchmark for
optimal procedures under favourable experimental circumstances, and the
right starting point for further developments under more challenging
experimental set-ups.

\begin{appendix}\label{app}
\section*{Appendix}
\subsection*{Consistency results}

\begin{pf*}{Proof of Theorem \ref{consistenza}}
To establish consistency, we shall resort to a
technique developed by \cite{brillinger} and \cite{Robinson}. In particular,
let us now write%
\begin{eqnarray*}
\Delta R_{L}(\alpha,\alpha_{0})&=&R_{L}(
\alpha)-R_{L}(\alpha_{0})
\\
&=&\log\frac{\widehat{G}(\alpha)}{G(\alpha)}-\log\frac{\widehat
{G}(\alpha_{0})}{G(\alpha_{0})}-\frac{(\alpha-\alpha_{0})}{\sum_{l=1}^{L}(2l+1)}%
\sum_{l=1}^{L}(2l+1)\log l\\
&&{}+\log
\frac{G(\alpha)}{G(\alpha_{0})},
\end{eqnarray*}
where%
\begin{eqnarray*}
G(\alpha) &=&\frac{1}{\sum_{l=1}^{L}(2l+1)}\sum_{l=1}^{L}(2l+1)
\frac{%
G_{0}l^{-\alpha_{0}}}{l^{-\alpha}},\qquad G(\alpha_{0})=G_{0},
\\
\log\frac{G(\alpha)}{G(\alpha_{0})} &=&\log \Biggl\{ \frac{1}{%
\sum_{l=1}^{L}(2l+1)}\sum
_{l=1}^{L}(2l+1)l^{\alpha-\alpha_{0}} \Biggr\}
\end{eqnarray*}
so that
%
\begin{eqnarray}
\Delta R_{L}(\alpha,\alpha_{0})&=&U_{L}(
\alpha,\alpha_{0})-T_{L}(\alpha,\alpha_{0}),
\nonumber
\\
\label{intrigila1} U_{L} ( \alpha,\alpha_{0} ) &=&-
\frac{(\alpha-\alpha_{0})}{
\sum_{l=1}^{L}(2l+1)}\sum_{l=1}^{L}(2l+1)\log
l+\log\frac{G(\alpha)}{%
G(\alpha_{0})},
\\
\label{intrigila2} T_{L} ( \alpha,\alpha_{0} ) &=&\log
\frac{\widehat{G}(\alpha_{0})%
}{G(\alpha_{0})}-\log\frac{\widehat{G}(\alpha)}{G(\alpha)}.
\end{eqnarray}

The proof is then completed with the aid of the auxiliary Lemmas \ref%
{consistency1}, \ref{consistency2} that we shall discuss below. Indeed%
\begin{eqnarray*}
\Pr \bigl\{ \llvert \widehat{\alpha}_{L}-\alpha_{0}
\rrvert >\varepsilon \bigr\} &\leq&\Pr \Bigl\{ \inf_{\llvert \alpha-\alpha
_{0}\rrvert >\varepsilon}\Delta
R_{L}(\alpha,\alpha_{0})\leq 0 \Bigr\}
\\
&\leq&\Pr \Bigl\{ \inf_{\llvert \alpha-\alpha_{0}\rrvert
>\varepsilon} \bigl[ U_{L}(\alpha,
\alpha_{0})-T_{L}(\alpha,\alpha_{0})%
\bigr] \leq0 \Bigr\}.
\end{eqnarray*}
For $\alpha_{0}-\alpha<2$ the previous probability is bounded by, for
any $%
\delta>0$%
\[
\leq\Pr \Bigl\{ \inf_{\llvert \alpha-\alpha_{0}\rrvert
>\varepsilon}U_{L} ( \alpha,
\alpha_{0} ) \leq\delta \Bigr\} +\Pr \Bigl\{ \sup_{\llvert \alpha-\alpha_{0}\rrvert
>\varepsilon
}T_{L}
( a,\alpha_{0} ) >0 \Bigr\}
\]
and%
\[
\lim_{L\rightarrow\infty}\Pr \Bigl\{ \sup_{\llvert \alpha-\alpha
_{0}\rrvert >\varepsilon}T_{L} ( a,
\alpha_{0} ) >0 \Bigr\} =0
\]
from Lemma \ref{consistency2}, while from Lemma \ref{consistency1} there
exist $\delta_{\varepsilon}=(1+\varepsilon/2)-\log(1+\varepsilon/2)-1>0$
such that
\[
\lim_{L\rightarrow\infty}\Pr \Bigl\{ \inf_{\llvert \alpha-\alpha
_{0}\rrvert >\varepsilon}U_{L} ( \alpha,
\alpha_{0} ) \leq \delta_{\varepsilon} \Bigr\} =0.
\]
For $\alpha_{0}-\alpha=2$ or $\alpha_{0}-\alpha>2$ the same result is
obtained by dividing $\Delta R_{L}(\alpha,\alpha_{0})$ by,
respectively, $%
\log\log L$ or $\log L$ and then resorting again to Lemmas \ref%
{consistency1}, \ref{consistency2}.

Now note that%
\begin{eqnarray*}
\widehat{G}(\widehat{\alpha}_{L})-G_{0} &=&
\frac{1}{\sum_{l=1}^{L}(2l+1)}%
\sum_{l=1}^{L}(2l+1)
\frac{\widehat{C}_{l}}{l^{-\widehat{\alpha
}_{L}}}\\
&&{}-\frac{%
1}{\sum_{l=1}^{L}(2l+1)}\sum_{l=1}^{L}(2l+1)
\frac{G_{0}l^{-\alpha
_{0}}}{%
l^{-\alpha_{0}}}
\\
&=&\frac{1}{\sum_{l=1}^{L}(2l+1)}\sum_{l=1}^{L}(2l+1)G_{0}l^{- (
\alpha_{0}-\widehat{\alpha}_{L} ) }
\biggl\{ \biggl( \frac{\widehat
{C}%
_{l}}{G_{0}l^{-\alpha_{0}}}-1 \biggr) + \bigl( 1-l^{ ( \alpha_{0}-%
\widehat{\alpha}_{L} ) } \bigr)
\biggr\}.
\end{eqnarray*}
Clearly:%
\begin{eqnarray*}
\bigl\llvert \widehat{G}(\widehat{\alpha}_{L})-G_{0}\bigr
\rrvert &\leq &\Biggl\llvert \frac{1}{\sum_{l=1}^{L}(2l+1)}%
\sum
_{l=1}^{L}(2l+1)G_{0}l^{- ( \alpha_{0}-\widehat{\alpha
}_{L} )
}
\biggl\{ \biggl( \frac{\widehat{C}_{l}}{G_{0}l^{-\alpha_{0}}}-1 \biggr) \biggr\} \Biggr\rrvert
\\
&&{}+\Biggl\llvert \frac{G_{0}}{\sum_{l=1}^{L}(2l+1)}\sum_{l=1}^{L}(2l+1)%
\bigl( l^{- ( \alpha_{0}-\widehat{\alpha}_{L} ) }-1 \bigr) \Biggr\rrvert
\\
&=&\llvert G_{A}\rrvert +\llvert G_{B}\rrvert ,
\end{eqnarray*}
so that
\[
\Pr \bigl( \bigl\llvert \widehat{G}(\widehat{\alpha}_{L})-G_{0}
\bigr\rrvert \geq\varepsilon \bigr) \leq\Pr \biggl( \llvert G_{A}
\rrvert \geq \frac{\varepsilon}{2} \biggr) +\Pr \biggl( \llvert G_{B}
\rrvert \geq \frac{\varepsilon}{2} \biggr).
\]
Observe that:%
\begin{eqnarray*}
\Pr \biggl\{ \llvert G_{A}\rrvert \geq\frac{\varepsilon}{2}%
\biggr\} &\leq&\Pr \biggl\{ \biggl[ \llvert G_{A}\rrvert \geq
\frac{
\varepsilon}{2} \biggr] \cap \biggl[ \llvert \alpha_{0}-\widehat {
\alpha}%
_{L}\rrvert <\frac{1}{3} \biggr] \biggr\}
\\
&&{}+\Pr \biggl\{ \llvert \alpha_{0}-\widehat{\alpha}_{L}
\rrvert \geq \frac{1}{3} \biggr\}
\\
&\leq&\Pr \Biggl\{ \Biggl[ \frac{1}{\sum_{l=1}^{L}(2l+1)}%
\sum
_{l=1}^{L}(2l+1)G_{0}l^{1/3}
\biggl\llvert \frac{\widehat{C}_{l}}{%
G_{0}l^{-\alpha_{0}}}-1\biggr\rrvert \geq\varepsilon \Biggr] \Biggr\}
+\RMo_{L}(1)
\\
&\leq&\frac{1}{\varepsilon}\frac{1}{\sum_{l=1}^{L}(2l+1)}%
\sum
_{l=1}^{L}(2l+1)G_{0}l^{1/3}
\mathbb{E}\biggl\llvert \frac{\widehat
{C}_{l}}{%
G_{0}l^{-\alpha_{0}}}-1\biggr\rrvert +\RMo_{L}(1)
\\
&\leq&\frac{C}{\varepsilon}\frac{1}{\sum_{l=1}^{L}(2l+1)}%
\sum
_{l=1}^{L}(2l+1)G_{0}l^{1/3}l^{-1/2}+
\RMo_{L}(1)
\\
&=&\frac{C}{\varepsilon}\frac{L^{11/6}}{\sum_{l=1}^{L}(2l+1)}%
+\RMo_{L}(1)=
\RMo_{L}(1).
\end{eqnarray*}
As far as the second term is concerned, we have, for a suitably small $%
\delta>0$:%
\begin{eqnarray*}
\Pr \biggl( \llvert G_{B}\rrvert \geq\frac{\varepsilon}{2} \biggr) &=&
\Pr \biggl( \biggl[ \llvert G_{B}\rrvert \geq\frac{\varepsilon
}{2}%
\biggr] \cap \bigl[ \log l ( \alpha_{0}-\widehat{
\alpha}_{L} ) \bigr] <\delta \biggr) \\
&&{}+\Pr \bigl( \log l (
\alpha_{0}-\widehat {\alpha }_{L} ) \geq\delta \bigr)
\\
&=&\Pr \biggl( \biggl[ \llvert G_{B}\rrvert \geq
\frac{\varepsilon
}{2}%
\biggr] \cap \bigl[ \log l ( \alpha_{0}-
\widehat{\alpha}_{L} ) \bigr] <\delta \biggr) +\RMo_{L}(1)
\end{eqnarray*}
and using $\llvert  \RMe^{-x}-1\rrvert \leq x$ for $0\leq x\leq1$, we
obtain%
\begin{eqnarray*}
\bigl\llvert l^{- ( \alpha_{0}-\widehat{\alpha}_{L} )
}-1\bigr\rrvert &=&\bigl\llvert \exp\bigl(-\log l (
\alpha_{0}-\widehat {\alpha}%
_{L} )\bigr) -1\bigr
\rrvert \leq\log l\llvert \alpha_{0}-\widehat {\alpha}%
_{L}
\rrvert ,\\[-27pt]
\end{eqnarray*}
\begin{eqnarray*}
&&\Pr \biggl( \biggl[ \llvert G_{B}\rrvert \geq\frac{\varepsilon
}{2}%
\biggr] \cap \bigl[ \log l ( \alpha_{0}-\widehat{
\alpha}_{L} ) \bigr] <\delta \biggr)
\\
&&\quad\leq\Pr \Biggl( \frac{G_{0}}{\sum_{l=1}^{L}(2l+1)}\sum_{l=1}^{L}(2l+1)
\bigl\llvert \bigl( l^{- ( \alpha_{0}-\widehat{\alpha}_{L} )
}-1 \bigr) \bigr\rrvert \geq
\frac{\varepsilon}{2}\cap \bigl[ \log l ( \alpha_{0}-\widehat{
\alpha}_{L} ) \bigr] <\delta \Biggr)
\\
&&\quad\leq\frac{1}{\varepsilon}\mathbb{E} \Biggl\{ \frac{G_{0}}{%
\sum_{l=1}^{L}(2l+1)}\sum
_{l=1}^{L}(2l+1)\log l\llvert
\alpha_{0}-%
\widehat{\alpha}_{L}\rrvert \Biggr\}
\\
&&\quad\leq\frac{C}{\varepsilon}\frac{G_{0}}{\sum_{l=1}^{L}(2l+1)}%
\sum
_{l=1}^{L}(2l+1)\log l\frac{\log L}{L}=
\RMo_{L}(1),
\end{eqnarray*}
where we have used
\[
\mathbb{E}\llvert \alpha_{0}-\widehat{\alpha}_{L}\rrvert
\leq \bigl\{ \mathbb{E}\llvert \alpha_{0}-\widehat{
\alpha}_{L}\rrvert^{2} \bigr\}^{1/2}=\RMO\biggl(
\frac{\log L}{L}\biggr),
\]
which under Condition \ref{A1} will be established in the proof of
Theorem %
\ref{clt0}.
\end{pf*}

The first auxiliary result we shall need concerns $G,\widehat{G}$ and
their $%
k$th order derivatives $G_{k},\widehat{G}_{k}$, that is,
\begin{eqnarray*}
\widehat{G}_{k} ( \alpha ) &=&\frac{1}{\sum_{l=1}^{L}(2l+1)}%
\sum
_{l=1}^{L}(2l+1) \bigl(
\log^{k}l \bigr) \frac{\widehat{C}_{l}}{%
l^{-\alpha}},\qquad k=0,1,2,\ldots,
\\
G_{k} ( \alpha ) &=&\frac{1}{\sum_{l=1}^{L}(2l+1)}%
\sum
_{l=1}^{L}(2l+1) \bigl( \log^{k}l \bigr)
\frac{G_{0}l^{-\alpha
_{0}}}{%
l^{-\alpha}},\qquad k=0,1,2,\ldots,
\end{eqnarray*}
where $\widehat{G}_{0}(\alpha)=\widehat{G}(\alpha)$ and $G_{0}(\alpha
)=G(\alpha)$ defined as above.
%
\begin{lemma}
\label{lemmagiovedi} Under Condition \ref{A1}, for all $2>\alpha_{0}-\alpha>\varepsilon>0$, as $L\rightarrow\infty$, we have%
\[
\sup_{\alpha}\biggl\llvert \log\frac{\widehat{G}_{k}(\alpha
)}{G_{k}(\alpha)}%
\biggr
\rrvert =\RMo_{p}(1).
\]
On the other hand, if $\alpha_{0}-\alpha\geq2$,
\[
\sup_{\alpha}\biggl\llvert \log\frac{\widehat{G}_{k}(\alpha
)}{G_{k}(\alpha)}%
\biggr
\rrvert =\RMO_{p}(1).
\]
\end{lemma}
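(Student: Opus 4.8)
The plan is to reduce the statement to a law-of-large-numbers for the ratio $\widehat{G}_{k}(\alpha)/G_{k}(\alpha)$: since $\log$ is continuous near $1$, for the first assertion it suffices to show $\sup_{\alpha}|\widehat{G}_{k}(\alpha)/G_{k}(\alpha)-1|=o_{p}(1)$, and for the second that $\widehat{G}_{k}(\alpha)/G_{k}(\alpha)$ is bounded away from $0$ and $\infty$ in probability, uniformly over the relevant $\alpha$. Writing $s=\alpha-\alpha_{0}$ and cancelling the common normalisation $1/\sum_{l}(2l+1)$, I would first record the identity
\begin{equation*}
\frac{\widehat{G}_{k}(\alpha)}{G_{k}(\alpha)}-1=\frac{\sum_{l=1}^{L}(2l+1)(\log^{k}l)\, l^{\alpha}\left(\widehat{C}_{l}-G_{0}l^{-\alpha_{0}}\right)}{\sum_{l=1}^{L}(2l+1)(\log^{k}l)\,G_{0}\,l^{s}}
\end{equation*}
and split the numerator through $\widehat{C}_{l}-G_{0}l^{-\alpha_{0}}=(\widehat{C}_{l}-C_{l})+(C_{l}-G_{0}l^{-\alpha_{0}})$ into a centred stochastic part and a deterministic bias part.

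For the bias part I would invoke Condition \ref{A1}, which gives $C_{l}-G_{0}l^{-\alpha_{0}}=G_{0}l^{-\alpha_{0}}O(1/l)$, so its contribution to the numerator is $O\!\left(\sum_{l}(2l+1)(\log^{k}l)\,l^{s-1}\right)$, one power of $l$ smaller than the denominator; by the summation asymptotics (\ref{slog0}) the resulting ratio is $o(1)$ throughout $-2<s<-\varepsilon$ (indeed $O(1/L)$ once $s>-1$). For the stochastic part I would exploit the independence of $\{\widehat{C}_{l}\}$ across $l$ together with $Var(\widehat{C}_{l})=\tfrac{2}{2l+1}C_{l}^{2}$, yielding
\begin{equation*}
Var\left(\sum_{l}(2l+1)(\log^{k}l)\,l^{\alpha}(\widehat{C}_{l}-C_{l})\right)=2\,G_{0}^{2}\sum_{l}(2l+1)(\log^{2k}l)\,l^{2s}\left(1+o(1)\right).
\end{equation*}
Dividing by the square of the denominator and applying (\ref{slog0}), (\ref{sumLcorol}) with exponents $s$ and $2s$, the relative variance is seen to be $o(1)$ for $s>-2$ (of order $1/L^{2}$ when $s>-1$), so Chebyshev delivers pointwise convergence to $1$.

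The main obstacle is upgrading these pointwise estimates to the stated uniform bound. Here I would use that $\alpha\mapsto\widehat{G}_{k}(\alpha)/G_{k}(\alpha)$ is smooth and that differentiation in $\alpha$ merely inserts an extra $\log l$, i.e.\ sends $\widehat{G}_{k}$ into $\widehat{G}_{k+1}$ and $G_{k}$ into $G_{k+1}$; the very same moment computation then shows $\partial_{\alpha}\!\left(\widehat{G}_{k}(\alpha)/G_{k}(\alpha)\right)=O_{p}(1)$, uniformly on compact $\alpha$-intervals bounded away from $s=-2$. Combining this stochastic equicontinuity with pointwise convergence on a fine grid, via a mean-value / Arzel\`a--Ascoli argument, gives $\sup_{\alpha}|\log(\widehat{G}_{k}(\alpha)/G_{k}(\alpha))|=o_{p}(1)$, which is the first conclusion.

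Finally, for $\alpha_{0}-\alpha\geq2$ (that is $s\leq-2$) the exponent $2+s$ is non-positive, so the sums $\sum_{l}(2l+1)(\log^{k}l)l^{s}$ are dominated by the low multipoles rather than obeying an averaging law: for $s<-2$ they converge and for $s=-2$ they grow only like a power of $\log L$. Consequently both $\widehat{G}_{k}(\alpha)$ and $G_{k}(\alpha)$ equal $(\sum_{l}(2l+1))^{-1}$ times a finite quantity — random in the first case, deterministic in the second — and since $\widehat{C}_{l}\geq0$ possesses a density, their ratio stays bounded away from $0$ and $\infty$ in probability, uniformly on the compact $s$-range $[a_{1}-\alpha_{0},-2]$ by the same equicontinuity argument. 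This yields $\sup_{\alpha}|\log(\widehat{G}_{k}(\alpha)/G_{k}(\alpha))|=O_{p}(1)$ and completes the dichotomy.
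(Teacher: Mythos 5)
Your reduction to $\sup_{\alpha}\vert \widehat{G}_{k}(\alpha)/G_{k}(\alpha)-1\vert=o_{p}(1)$, the splitting of $\widehat{C}_{l}-G_{0}l^{-\alpha_{0}}$ into a centred part and a deterministic bias of relative order $O(1/l)$, and the treatment of the regime $\alpha_{0}-\alpha\geq 2$ (convergent or logarithmically growing sums, positivity of $\widehat{C}_{l}$ plus a first-moment bound) all coincide with the paper's argument, and your variance computation for the centred part at each fixed $\alpha$ is correct. The genuine difficulty of the lemma, however, is the supremum over $\alpha$, and this is where your argument has a gap. You assert that ``the very same moment computation'' shows $\partial_{\alpha}\bigl(\widehat{G}_{k}(\alpha)/G_{k}(\alpha)\bigr)=O_{p}(1)$ \emph{uniformly} on compact $\alpha$-intervals. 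A moment computation at a fixed $\alpha$ only yields a pointwise $O_{p}(1)$ bound on the derivative; passing from pointwise tightness of the derivative to control of $\sup_{\alpha}\vert\partial_{\alpha}(\cdot)\vert$ is exactly the same uniformity problem you are trying to solve for the ratio itself, so as written the stochastic-equicontinuity step is circular. It is repairable (for instance by exploiting monotonicity of $l^{\alpha}$ in $\alpha$, or by the device below), but some additional input is required.

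The paper avoids the grid-plus-equicontinuity route altogether. It bounds the $\alpha$-dependent weighted average by
\begin{equation*}
\left\vert \frac{\sum_{l}(2l+1)(\log l)^{k}l^{\alpha-\alpha_{0}}\left\{\frac{\widehat{C}_{l}}{G_{0}l^{-\alpha_{0}}}-1\right\}}{\sum_{l}(2l+1)(\log l)^{k}l^{\alpha-\alpha_{0}}}\right\vert \leq \left(\sup_{l\leq L}\sqrt{2l+1}\left\vert \frac{\widehat{C}_{l}}{G_{0}l^{-\alpha_{0}}}-1\right\vert\right)\cdot \frac{\sum_{l}\sqrt{2l+1}(\log l)^{k}l^{\alpha-\alpha_{0}}}{\sum_{l}(2l+1)(\log l)^{k}l^{\alpha-\alpha_{0}}}\text{ ,}
\end{equation*}
where the second factor is deterministic and tends to zero over the stated $\alpha$-range, while the first factor does not depend on $\alpha$ at all, so uniformity in $\alpha$ comes for free once that single random supremum is controlled. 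That control is obtained by a union bound over the $L$ multipoles combined with a Markov inequality on a sufficiently high moment $M$ of $\sqrt{2l+1}(\widehat{C}_{l}/C_{l}-1)$, chosen so that $L\cdot O(L^{-M\beta})=o(1)$; note that second moments alone do not survive the union bound over $L$ terms, which is why higher moments of $\widehat{C}_{l}/C_{l}$ are needed there. I would recommend replacing your Chebyshev-plus-grid step by this maximal inequality.
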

\begin{pf}
Let us first focus on the case where $\alpha-\alpha_{0}>-2$. For clarity
of exposition, we start from a simplified parametric version of
Condition %
\ref{A}, that is, we assume that we have exactly%
\[
C_{l}(\vartheta)=C_{l}(G_{0},
\alpha_{0})=G_{0}l^{-\alpha_{0}}.
\]
Let us write first%
\begin{eqnarray*}
\frac{\widehat{G}_{k}(\alpha)}{G_{k}(\alpha)}-1
&=&\frac{(\sum_{l=1}^{L}(2l+1) ( \log l )^{k}{ 
\widehat{C}_{l}}/{l^{-\alpha}})/
({\sum_{l=1}^{L}(2l+1)})}{(%
\sum_{l=1}^{L}(2l+1) ( \log l )^{k}{G_{0}l^{-\alpha
_{0}}}/{%
l^{-\alpha}})/({\sum_{l=1}^{L}(2l+1)})}-1
\\
&=&\frac{\sum_{l=1}^{L}(2l+1) ( \log l )^{k}G_{0}l^{\alpha
-\alpha
_{0}} \{ {\widehat{C}_{l}}/({G_{0}l^{-\alpha_{0}}})-1 \} }{%
\sum_{l=1}^{L}(2l+1) ( \log l )^{k}G_{0}l^{\alpha-\alpha
_{0}}}.
\end{eqnarray*}
Fixed $0<\beta<\frac{1}{2}$, we have, for all $l$:
\begin{eqnarray*}
&&\Pr \biggl( \biggl\llvert
\frac{\sum_{l=1}^{L}(2l+1)G_{0}l^{\alpha-\alpha _{0}} ( \log l )^{k} \{
{\widehat {C}_{l}}/({G_{0}l^{-\alpha _{0}}})-1 \}
}{\sum_{l=1}^{L}(2l+1) ( \log l
)^{k}G_{0}l^{\alpha-\alpha_{0}}}\biggr\rrvert > \delta_{\varepsilon }
\biggr)
\\
&&\quad\leq\Pr \biggl( L^{\beta}\biggl\llvert \frac{\sum_{l=1}^{L}\sqrt{(2l+1)}
( \log l )^{k}l^{\alpha-\alpha_{0}}}{\sum_{l=1}^{L}(2l+1) (
\log l )^{k}l^{\alpha-\alpha_{0}}}\biggr
\rrvert \frac{\sup_{l}\sqrt{%
(2l+1)}\llvert {\widehat{C}_{l}}/({G_{0}l^{-\alpha_{0}}})-1\rrvert
}{L^{\beta}}>\delta_{\varepsilon} \biggr)
\\
&&\quad\leq\Pr \biggl( \sup_{l}\sqrt{(2l+1)}\biggl\llvert
\frac{\widehat
{C}_{l}}{%
G_{0}l^{-\alpha_{0}}}-1\biggr\rrvert >\delta_{\varepsilon}L^{\beta
}
\biggr),
\end{eqnarray*}
because
\[
L^{\beta}\frac{\sum_{l=1}^{L}\sqrt{(2l+1)}
( \log l )^{k}l^{\alpha-\alpha_{0}}}{\sum_{l=1}^{L}(2l+1)
( \log l )^{k}l^{\alpha-\alpha_{0}}}=C\frac{L^{\beta+{3}/{2}+\alpha-\alpha
_{0}}\log^{k}L}{L^{2+\alpha-\alpha_{0}}\log^{k}L}=CL^{\beta
-1/2}=
\RMo(1).
\]

Now%
\begin{eqnarray*}
&&\Pr \biggl\{ \sup_{l}\sqrt{(2l+1)}\biggl\llvert
\frac{\widehat{C}_{l}}{%
G_{0}l^{-\alpha_{0}}}-1\biggr\rrvert >\delta_{\varepsilon}L^{\beta
} \biggr
\}
\\
&&\quad\leq L\max_{l}\Pr \biggl\{ \sqrt{(2l+1)}\biggl\llvert
\frac{\widehat
{C}_{l}}{%
G_{0}l^{-\alpha_{0}}}-1\biggr\rrvert >\delta_{\varepsilon}L^{\beta
} \biggr
\}
\end{eqnarray*}
and%
\[
\Pr \biggl\{ \sqrt{(2l+1)}\biggl\llvert \frac{\widehat
{C}_{l}}{G_{0}l^{-\alpha
_{0}}}-1\biggr\rrvert >
\delta_{\varepsilon}L^{\beta} \biggr\} \leq C\frac{%
\mathbb{E} [ \sqrt{(2l+1)}\llvert {\widehat{C}_{l}}/({%
G_{0}l^{-\alpha_{0}}})-1\rrvert  ]^{M}}{\delta_{\varepsilon
}^{M}L^{M\beta}}=\RMO
\bigl(L^{-M\beta}\bigr),
\]
uniformly in $l$, see, for instance, \cite{marpecbook}, such that
$M>1/\beta$. Hence,
\[
\Pr \biggl\{ \sup_{l}\sqrt{(2l+1)}\biggl\llvert \frac{\widehat{C}_{l}}{%
G_{0}l^{-\alpha_{0}}}-1
\biggr\rrvert >\delta_{\varepsilon}L^{\beta
} \biggr\} =\RMO
\bigl(L^{1-M\beta}\bigr)=\RMo_{L}(1).
\]
For the general semiparametric case, the only difference is to be found in
the expressions for $\mathbb{E}\widehat{C}_{l}$, which under Condition
\ref%
{A1} becomes%
\[
\mathbb{E}\widehat{C}_{l}=G_{0}l^{-\alpha_{0}} \bigl( 1+
\RMO \bigl( l^{-1} \bigr) \bigr),
\]
where the bound $\RMO ( l^{-1} ) $ is uniform over $\alpha$ by
assumption. As before, we hence obtain%
\begin{eqnarray*}
&& \frac{\widehat{G}_{k}(\alpha)}{G_{k}(\alpha)}-1
\\
&&\quad=\frac{\sum_{l=1}^{L}(2l+1) ( \log l )^{k}{\widehat
{C}_{l}}/{%
l^{-\alpha}}-\sum_{l=1}^{L}(2l+1) ( \log l )^{k}{%
G_{0}l^{-\alpha_{0}}}/{l^{-\alpha}}}{\sum_{l=1}^{L}(2l+1) ( \log
l )^{k}{G_{0}l^{-\alpha_{0}}}/{l^{-\alpha}}}
\\
&&\quad=\frac{\sum_{l=1}^{L}(2l+1) ( \log l )^{k}G_{0}l^{\alpha
-\alpha
_{0}} \{ {\widehat{C}_{l}}/({G_{0}l^{-\alpha_{0}}})-{\mathbb
{E}%
\widehat{C}_{l}}/({G_{0}l^{-\alpha_{0}}}) \} }{\sum_{l=1}^{L}(2l+1) (
\log l )^{k}G_{0}l^{\alpha-\alpha_{0}}}
\\
&&\qquad{}+\frac{\sum_{l=1}^{L}(2l+1) ( \log l )^{k}G_{0}l^{\alpha
-\alpha
_{0}} \{ \RMO({1}/{l}) \} }{\sum_{l=1}^{L}(2l+1) ( \log
l )^{k}G_{0}l^{\alpha-\alpha_{0}}}.
\end{eqnarray*}
The second summand is immediately observed to be $\RMO(\frac{1}{L})$. By the
same argument as before, for $0<\beta<\frac{1}{2}$, we have, for all
$l$:
\begin{eqnarray*}
&&\Pr \biggl\{ \biggl\llvert \frac{\sum_{l=1}^{L}(2l+1)
( \log l )^{k}G_{0}l^{\alpha-\alpha_{0}} \{ {\widehat{C}_{l}}/({%
G_{0}l^{-\alpha_{0}}})-{\mathbb{E}\widehat{C}_{l}}/({G_{0}l^{-\alpha
_{0}}%
}) \} }{\sum_{l=1}^{L}(2l+1) ( \log l )^{k}G_{0}l^{\alpha
-\alpha_{0}}}\biggr\rrvert >
\delta_{\varepsilon} \biggr\}
\\
&&\quad\leq\Pr \biggl\{ \sup_{l}\sqrt{(2l+1)}\frac{\mathbb{E}\widehat
{C}_{l}}{%
G_{0}l^{-\alpha_{0}}}\biggl
\llvert \frac{\widehat{C}_{l}}{\mathbb
{E}\widehat{C}%
_{l}}-1\biggr\rrvert >\delta_{\varepsilon}L^{\beta}
\biggr\}
\\
&&\quad\leq\Pr \biggl\{ \sup_{l}\sqrt{(2l+1)} \biggl\{ 1+\RMO\biggl(
\frac{1}{l}\biggr) \biggr\} \biggl\llvert \frac{\widehat{C}_{l}}{\mathbb{E}\widehat{C}_{l}}-1\biggr\rrvert
>\delta_{\varepsilon}L^{\beta} \biggr\}.
\end{eqnarray*}

The rest of the proof is analogous to the argument we provided before, and
hence omitted.

For the case where $\alpha_{0}-\alpha\geq2$, it suffices to note that
\[
\frac{\widehat{G}_{k}(\alpha)}{G_{k}(\alpha)}=\frac{
(\sum_{l=1}^{L}(2l+1) ( \log l )^{k}{
\widehat{C}_{l}}/{l^{-\alpha}})/
({%
\sum_{l=1}^{L}(2l+1)})}{
(\sum_{l=1}^{L}(2l+1) ( \log l )^{k}{G_{0}l^{-\alpha
_{0}}}/{%
l^{-\alpha}})/({\sum_{l=1}^{L}(2l+1)})}>0\qquad\mbox{with probability 1}
\]
and%
\[
\mathbb{E}\frac{\widehat{G}_{k}(\alpha)}{G_{k}(\alpha)}=\frac{
(\sum_{l=1}^{L}(2l+1) ( \log l )^{k}({
G_{0}l^{-\alpha_{0}}}/{l^{-\alpha}}) \{ 1+\RMO({1}/{l}) \})/
({%
\sum_{l=1}^{L}(2l+1)})
}{
(\sum_{l=1}^{L}(2l+1) ( \log l )^{k}{%
G_{0}l^{-\alpha_{0}}}/{l^{-\alpha}})/
({\sum_{l=1}^{L}(2l+1)})}=\RMO(1).\quad
\]
\upqed
\end{pf}

We are now in the position to establish the asymptotic behavior of $%
U_{L} ( \alpha,\alpha_{0} ) $ in (\ref{intrigila1}), for
which we
have the following:
%
\begin{lemma}
\label{consistency1}For all $2>\alpha_{0}-\alpha>\varepsilon>0$, we
have that%
\begin{eqnarray*}
&& \lim_{L\rightarrow\infty} \Biggl\{ -\frac{(\alpha-\alpha_{0})}{%
\sum_{l=1}^{L}(2l+1)}\sum
_{l=1}^{L}(2l+1)\log l+\log\frac{G(\alpha)}{%
G(\alpha_{0})}
\Biggr\}
\\
&&\quad=\lim_{L\rightarrow\infty} \Biggl[ \log \Biggl\{ \frac{1}{%
\sum_{l=1}^{L}(2l+1)}\sum
_{l=1}^{L}(2l+1)l^{\alpha-\alpha_{0}} \Biggr\} -
\frac{(\alpha-\alpha_{0})}{\sum_{l=1}^{L}(2l+1)}\sum_{l=1}^{L}(2l+1)\log
l%
\Biggr]
\\
&&\quad=\bigl(1+(\alpha-\alpha_{0})/2\bigr)-\log\bigl(1+(\alpha-
\alpha_{0})/2\bigr)-1>\delta_{\varepsilon}>0.
\end{eqnarray*}
Moreover, if $\alpha_{0}-\alpha=2$,
\[
\lim_{L\rightarrow\infty}\frac{1}{\log\log L} \Biggl\{ -\frac{(\alpha
-\alpha_{0})}{\sum_{l=1}^{L}(2l+1)}\sum
_{l=1}^{L}(2l+1)\log l+\log
\frac{%
G(\alpha)}{G(\alpha_{0})} \Biggr\} =1>0
\]
and for $\alpha_{0}-\alpha>2$,
\begin{eqnarray*}
&&
\lim_{L\rightarrow\infty}\frac{1}{\log L} \Biggl\{ -\frac{(\alpha
-\alpha_{0})}{\sum_{l=1}^{L}(2l+1)}\sum
_{l=1}^{L}(2l+1)\log l+\log
\frac
{G(\alpha)%
}{G(\alpha_{0})} \Biggr\}\\
&&\quad =\alpha_{0}-\alpha-2>0.
\end{eqnarray*}
\end{lemma}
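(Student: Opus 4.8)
The plan is to set $s=\alpha-\alpha_{0}$ throughout and to reduce the whole statement to the sharp asymptotic evaluation of the three elementary sums $\sum_{l=1}^{L}(2l+1)$, $\sum_{l=1}^{L}(2l+1)\log l$ and $\sum_{l=1}^{L}(2l+1)l^{s}$. Rewriting the quantity of interest in the form
\begin{equation*}
U_{L}(\alpha,\alpha_{0})=-s\,\frac{\sum_{l=1}^{L}(2l+1)\log l}{\sum_{l=1}^{L}(2l+1)}+\log\left\{\frac{1}{\sum_{l=1}^{L}(2l+1)}\sum_{l=1}^{L}(2l+1)l^{s}\right\},
\end{equation*}
I would first record the exact denominator $\sum_{l=1}^{L}(2l+1)=L^{2}+2L$ and, by comparison with $\int_{1}^{L}2x\log x\,dx=L^{2}\log L-L^{2}/2+O(1)$ (in agreement with the summation formulas established in the Appendix), the \emph{sharp} ratio
\begin{equation*}
\frac{1}{\sum_{l=1}^{L}(2l+1)}\sum_{l=1}^{L}(2l+1)\log l=\log L-\tfrac12+o(1).
\end{equation*}
The first summand of $U_{L}$ is therefore $-s\log L+s/2+o(1)$, and everything then hinges on the power sum in the second summand, whose growth changes qualitatively across the three regimes $s>-2$, $s=-2$, $s<-2$.

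In the principal regime $-2<s<-\varepsilon$ (that is, $\varepsilon<\alpha_{0}-\alpha<2$) I would use $\sum_{l=1}^{L}(2l+1)l^{s}=\frac{2}{s+2}L^{s+2}\{1+o(1)\}$, so that
\begin{equation*}
\log\left\{\frac{1}{\sum_{l=1}^{L}(2l+1)}\sum_{l=1}^{L}(2l+1)l^{s}\right\}=s\log L+\log\frac{2}{s+2}+o(1).
\end{equation*}
Adding the two pieces, the divergent terms $\mp s\log L$ cancel and I am left with $U_{L}\to\frac{s}{2}+\log\frac{2}{s+2}=\frac{s}{2}-\log(1+\frac{s}{2})$, which is exactly $(1+\frac{\alpha-\alpha_{0}}{2})-\log(1+\frac{\alpha-\alpha_{0}}{2})-1$. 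Positivity, and hence the existence of a strictly positive $\delta_{\varepsilon}$, follows from the strict inequality $\log(1+x)<x$ for $x\neq0$ applied at $x=s/2$; monotonicity of $x\mapsto x-\log(1+x)$ pins the infimum over the admissible range at the endpoint nearest the origin.

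For the two boundary regimes the same bookkeeping applies and only the growth of the power sum differs. At $s=-2$ the sum is harmonic, $\sum_{l=1}^{L}(2l+1)l^{-2}=2\sum_{l=1}^{L}l^{-1}+\sum_{l=1}^{L}l^{-2}=2\log L+O(1)$, so the logarithm contributes $\log\log L-2\log L+O(1)$; combined with the first summand $2\log L-1+o(1)$ this yields $U_{L}=\log\log L+O(1)$, whence $U_{L}/\log\log L\to1$. For $s<-2$ the series $\sum_{l\ge1}(2l+1)l^{s}$ converges to a finite constant $C$, so the logarithm equals $-2\log L+O(1)$, and together with $-s\log L+O(1)$ this gives $U_{L}=(-s-2)\log L+O(1)=(\alpha_{0}-\alpha-2)\log L+O(1)$, i.e. $U_{L}/\log L\to\alpha_{0}-\alpha-2>0$.

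The only genuinely delicate point is the principal regime: one must retain the additive constant $-\tfrac12$ in the $\log l$-ratio and the exact constant $\frac{2}{s+2}$ in the power sum, because it is precisely their combination that produces the clean closed form once the $s\log L$ terms cancel; a cruder leading-order estimate would drop the surviving $s/2$ and misidentify the limit. The two boundary cases are, by contrast, robust, since only the \emph{order} of growth of the power sum (logarithmic at $s=-2$, bounded for $s<-2$) enters the normalized limits, and the $O(1)$ constants are absorbed after division by $\log\log L$ or $\log L$.
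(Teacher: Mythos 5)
Your proposal is correct and follows essentially the same route as the paper's proof: both arguments rest on the asymptotics $\sum_{l\le L}(2l+1)\log l = (L^{2}+2L)(\log L-\tfrac12)+o(L^{2})$ (the paper phrases this via $-2\int_{0}^{1}x\log x\,dx=\tfrac12$) and $\sum_{l\le L}(2l+1)l^{s}\sim\frac{2}{s+2}L^{s+2}$ for $s>-2$, with the $s\log L$ terms cancelling to leave $\frac{s}{2}-\log(1+\frac{s}{2})$, and both treat the cases $\alpha_{0}-\alpha=2$ and $\alpha_{0}-\alpha>2$ by noting that the power sum grows logarithmically, respectively converges. The only cosmetic difference is that you evaluate the two sums directly and add, whereas the paper first centers the power sum by the factor $(1+\frac{s}{2})L^{-s}$; your remark that the surviving constants $\tfrac{s}{2}$ and $\log\frac{2}{s+2}$ must both be retained is exactly the delicate point the paper's normalization is designed to handle.
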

\begin{pf}
Consider first the case $\alpha-\alpha_{0}>-2$%
\begin{eqnarray*}
&&\log \Biggl\{ \frac{1}{\sum_{l=1}^{L}(2l+1)}\sum_{l=1}^{L}(2l+1)l^{\alpha
-\alpha_{0}}
\Biggr\}
\\
&&\quad=\log \Biggl\{ \frac{(1+(\alpha-\alpha_{0})/2)}{L^{\alpha-\alpha
_{0}}\sum_{l=1}^{L}(2l+1)}\sum_{l=1}^{L}(2l+1)l^{\alpha-\alpha
_{0}}
\Biggr\} \\
&&\qquad{}-\log\bigl(1+(\alpha-\alpha_{0})/2\bigr)+(\alpha-
\alpha_{0})\log L,
\end{eqnarray*}
where%
\begin{eqnarray*}
\frac{(1+(\alpha-\alpha_{0})/2)}{L^{\alpha-\alpha
_{0}}\sum_{l=1}^{L}(2l+1)}\sum_{l=1}^{L}(2l+1)l^{\alpha-\alpha_{0}}-1
&=&\RMo_{L}(1),
\end{eqnarray*}
whence
\begin{eqnarray*}
\log \Biggl\{ \frac{(1+(\alpha-\alpha_{0})/2)}{L^{\alpha-\alpha
_{0}}\sum_{l=1}^{L}(2l+1)}\sum_{l=1}^{L}(2l+1)l^{\alpha-\alpha
_{0}}
\Biggr\} &=&\RMo_{L}(1).
\end{eqnarray*}
Thus,
\begin{eqnarray*}
&&\log \Biggl\{ \frac{1}{\sum_{l=1}^{L}(2l+1)}\sum_{l=1}^{L}(2l+1)l^{\alpha
-\alpha_{0}}
\Biggr\} -\frac{(\alpha-\alpha_{0})}{\sum_{l=1}^{L}(2l+1)}%
\sum_{l=1}^{L}(2l+1)
\log l
\\
&&\quad=-\log\bigl(1+(\alpha-\alpha_{0})/2\bigr)+(\alpha-
\alpha_{0})\log L\\
&&\qquad{}-\frac{%
(\alpha-\alpha_{0})}{\sum_{l=1}^{L}(2l+1)}\sum
_{l=1}^{L}(2l+1)\log l+\RMo_{L}(1)
\\
&&\quad=\frac{(\alpha-\alpha_{0})}{\sum_{l=1}^{L}(2l+1)}\sum_{l=1}^{L}(2l+1)
(%
\log L-\log l)
\\
&&\qquad{}-\frac{(\alpha-\alpha_{0})}{2}+\frac{(\alpha-\alpha_{0})}{2}-\log \bigl(1+(\alpha-
\alpha_{0})/2\bigr)+\RMo_{L}(1).
\end{eqnarray*}
Now%
\begin{eqnarray*}
&&\frac{(\alpha-\alpha_{0})}{\sum_{l=1}^{L}(2l+1)}\sum_{l=1}^{L}(2l+1)
(%
\log L-\log l)-\frac{(\alpha-\alpha_{0})}{2}
\\
&&\quad=-2(\alpha-\alpha_{0})\int_{0}^{1}x
\log x\mrmd x-\frac{(\alpha-\alpha_{0})}{%
2}+\RMo_{L}(1)=\RMo_{L}(1),
\end{eqnarray*}
because%
\[
\int_{0}^{1}x\log x\mrmd x= \biggl[
\frac{x^{2}}{2}\log x \biggr]_{0}^{1}-%
\int
_{0}^{1}\frac{x^{2}}{2}\frac{1}{x}
\mrmd x=-\frac{1}{4}.
\]

We have hence proved that%
\begin{eqnarray*}
&&\log \Biggl\{ \frac{1}{\sum_{l=1}^{L}(2l+1)}\sum_{l=1}^{L}(2l+1)l^{\alpha
-\alpha_{0}}
\Biggr\} -\frac{(\alpha-\alpha_{0})}{\sum_{l=1}^{L}(2l+1)}%
\sum_{l=1}^{L}(2l+1)
\log l
\\
&&\quad=\bigl(1+ ( \alpha-\alpha_{0} ) /2\bigr)-\log\bigl(1+ (
\alpha -\alpha_{0} ) /2\bigr)-1+\RMo_{L}(1)>0
\end{eqnarray*}
for all $\llvert \alpha-\alpha_{0}\rrvert >\varepsilon$,
$\alpha
-\alpha_{0}>-2$.

Consider now the case $\alpha_{0}-\alpha\geq2$. We can rewrite:
\begin{eqnarray*}
&& -\frac{(\alpha-\alpha_{0})}{\sum_{l=1}^{L}(2l+1)}\sum_{l=1}^{L}(2l+1)
\log l+\log\frac{G(\alpha)}{G(\alpha_{0})}
\\
&&\quad=\log \Biggl\{ \frac{1}{\sum_{l=1}^{L}(2l+1)}\sum_{l=1}^{L}(2l+1)l^{\alpha
-\alpha_{0}}
\Biggr\} -\frac{(\alpha-\alpha_{0})}{\sum_{l=1}^{L}(2l+1)}%
\sum_{l=1}^{L}(2l+1)
\log l
\\
&&\quad= ( \alpha_{0}-\alpha ) \log L \Biggl[ \frac{\log
\sum_{l=1}^{L}(2l+1)l^{- ( \alpha_{0}-\alpha ) }}{ (
\alpha_{0}-\alpha ) \log L}-
\frac{\log\sum_{l=1}^{L} ( 2l+1 )
}{%
( \alpha_{0}-\alpha ) \log L}\\
&&\qquad\hspace*{64pt}{}+\frac{1}{\sum_{l=1}^{L}(2l+1)} 
\sum
_{l=1}^{L}(2l+1)\frac{\log l}{\log L} \Biggr]
\\
&&\quad= ( \alpha_{0}-\alpha ) \log L [ A_{L}+B_{L}+C_{L}
].
\end{eqnarray*}
For the term $A_{L}$:%
\[
\sum_{l=1}^{L}(2l+1)l^{- ( \alpha_{0}-\alpha )
}=c
\sum_{l=1}^{L}l^{1- ( \alpha_{0}-\alpha ) }+
\RMo_{L^{2- (
\alpha_{0}-\alpha ) }} ( 1 ) \rightarrow_{L}c>1,
\]
because $\sum_{l=1}^{L}l^{1- ( \alpha_{0}-\alpha ) }$ is a
convergent series when the exponent $1- ( \alpha_{0}-\alpha )
<-1; $ for $1- ( \alpha_{0}-\alpha ) =-1$, we have $ \{
\sum_{l=1}^{L}l^{1- ( \alpha_{0}-\alpha ) }/\log L \}
\rightarrow1$ and the argument is analogous. Therefore,
\[
( \alpha_{0}-\alpha ) \log L\times [ A_{L} ] =\cases{
\RMO ( \log\log L ), &\quad for $\alpha_{0}-\alpha=2$,
\cr
\RMO(1), &
\quad for $\alpha_{0}-\alpha>2$. }
\]
As far as $B_{L}$ is concerned, we have $\log\sum_{l=1}^{L} (
2l+1 ) =2\log L+\RMo ( \log L ) $, so that:%
\[
\lim_{L\rightarrow\infty}B_{L}=-\frac{2}{ ( \alpha_{0}-\alpha
)
};
\]
finally, simple manipulations and standard properties of the logarithm
(which is a slowly varying function, compare \cite{bingham}) yield
\[
\lim_{L\rightarrow\infty}C_{L}=\lim_{L\rightarrow\infty} \Biggl[
\frac
{1}{%
\sum_{l=1}^{L}(2l+1)}\sum_{l=1}^{L}(2l+1)
\frac{\log l}{\log L} \Biggr] =1.
\]

Summing up, we obtain:%
\[
\lim_{L\rightarrow\infty} \bigl\{ ( \alpha_{0}-\alpha ) \log L [
B_{L}+C_{L} ] \bigr\} =\cases{ 0, &\quad for $
\alpha_{0}-\alpha=2$,
\cr
( \alpha_{0}-\alpha ) -2>0, &
\quad for $\alpha_{0}-\alpha>2$, }
\]
and the claimed result follows.
\end{pf}

In \cite{Robinson} a related computation was given for approximate
Whittle estimates on stationary long memory processes in dimension $d=1$,
that is, the limiting lower bound turned out to be $(1+(\alpha-\alpha_{0}))-\log(1+(\alpha-\alpha_{0}))-1+\RMo_{L}(1)>\delta_{\varepsilon
}$. In
view of this, we conjecture that for general $d$-dimensional spheres the
lower bound will take the form%
\begingroup
\abovedisplayskip=7pt
\belowdisplayskip=7pt
\[
\biggl(1+\frac{(\alpha-\alpha_{0})}{d}\biggr)-\log\biggl(1+\frac{(\alpha-\alpha_{0})}{d}%
\biggr)-1+\RMo_{L}(1)>\delta_{\varepsilon}.
\]

Now we look at $T_{L} ( \alpha,\alpha_{0} ) $, for which we
provide the following
lemma.
%
\begin{lemma}
\label{consistency2} Let $T_{L} ( \alpha,\alpha_{0} ) $ defined
as in (\ref{intrigila2}). Under Condition \ref{A1}, as $L\rightarrow
\infty,
$ we have%
\begin{eqnarray*}
\sup_{\alpha}\bigl\llvert T_{L} ( \alpha,\alpha_{0}
) \bigr\rrvert &=&\RMo_{p}(1)\qquad\mbox{for }\alpha_{0}-
\alpha<2,
\\
\sup_{\alpha}\bigl\llvert T_{L} ( \alpha,\alpha_{0}
) \bigr\rrvert &=&\RMO_{p}(1)\qquad\mbox{for }\alpha_{0}-
\alpha\geq2.
\end{eqnarray*}
\end{lemma}
\begin{pf}
For $\alpha_{0}-\alpha<2$, consider first%
\[
\frac{\widehat{G}(\alpha_{0})}{G(\alpha_{0})}-1=\frac{1}{%
\sum_{l=1}^{L}(2l+1)}\sum_{l=1}^{L}(2l+1)
\biggl(\frac{\widehat{C}_{l}}{%
G_{0}l^{-\alpha_{0}}}-1\biggr),
\]
where we have easily, as $L\rightarrow\infty$,
\begin{eqnarray*}
\mathbb{E} \biggl\{ \frac{\widehat{G}(\alpha_{0})}{G(\alpha_{0})}-1 \biggr\} &=&\frac{1}{\sum_{l=1}^{L}(2l+1)}\sum
_{l=1}^{L}(2l+1) \biggl(
\frac
{G_{0}l^{-\alpha
_{0}} \{ 1+\RMO(l^{-1}) \} }{G_{0}l^{-\alpha_{0}}}-1\biggr)\rightarrow 0,
\\
\operatorname{Var} \biggl\{ \frac{\widehat{G}(\alpha_{0})}{G(\alpha_{0})} \biggr\} &=& \biggl\{ \frac{1}{\sum_{l=1}^{L}(2l+1)} \biggr
\}^{2}\sum_{l=1}^{L}(2l+1)^{2}=
\RMO\biggl(\frac{1}{L}\biggr),
\end{eqnarray*}
whence by Slutzky's lemma%
\[
\biggl\{ \frac{\widehat{G}_{L}(\alpha_{0})}{G_{L}(\alpha_{0})}\stackrel {%
\mathbb{P}} {
\longrightarrow}1 \biggr\} \Rightarrow \biggl\{ \log\frac{%
\widehat{G}(\alpha_{0})}{G(\alpha_{0})}\stackrel{
\mathbb{P}} {%
\longrightarrow}0 \biggr\}.
\]
On the other hand, in view of Lemma \ref{lemmagiovedi}, we have that:%
\[
\sup_{\alpha}\biggl\llvert \log\frac{\widehat{G}(\alpha)}{G(\alpha)}%
\biggr
\rrvert =\RMo_{p}(1),
\]
whence the result follows easily. The proof for $\alpha_{0}-\alpha
\geq2$
is immediate.
\end{pf}
\endgroup

\subsection*{Some integral approximation results}

The following lemma is straightforward.
%
\begin{lemma}
\label{sumlog} Let $L_{1}<L $, then we have%
%
\begin{eqnarray}
\label{slog0} \int_{L_{1}}^{L}2x^{1+s}\mrmd
x&=&\frac{1}{ ( 1+{s}/{2} ) } \bigl( L^{2 ( 1+{s}/{2} ) }-L_{1}^{2 ( 1+{s}/{2} )
}
\bigr);\vadjust{\goodbreak}
\end{eqnarray}
\begin{eqnarray*}
\int_{L_{1}}^{L}2x^{1+s}\log x\mrmd x&=&-
\frac{L^{2 ( 1+{s}/{2} )
}-L_{1}^{2 ( 1+{s}/{2} ) }}{2 ( 1+{s}/{2} )^{2}}+%
\frac{L^{2 ( 1+{s}/{2} ) }\log L-L_{1}^{2 ( 1+
{s}/{2}%
) }\log L_{1}}{ ( 1+{s}/{2} ) };
\nonumber\\
\int_{L_{1}}^{L}2x^{1+s}
\log^{2}x\mrmd x &=&\frac{L^{2 ( 1+{s}/{2}%
) }-L_{1}^{2 ( 1+{s}/{2} ) }}{2 ( 1+{s}/{2}%
)^{3}}-\frac{L^{2 ( 1+{s}/{2} ) }\log
L-L_{1}^{2 ( 1+%
{s}/{2} ) }\log L_{1}}{ ( 1+{s}/{2} )^{2}}
\nonumber\\
&&{}+\frac{L^{2 ( 1+{s}/{2} ) }\log^{2}L-L_{1}^{2 (
1+{s%
}/{2} ) }\log^{2}L_{1}}{ ( 1+{s}/{2} ) }.
\nonumber
\end{eqnarray*}
\end{lemma}

The next result is more delicate; for the sake of brevity, we prove
only (%
\ref{Ubello1}); (\ref{sumLcorol}) can be viewed as a simpler special case
with $L_{1}=1$.
%
\begin{proposition}
\label{sumLintegralscomplete}Let%
\[
Z_{L} ( s ):= \Biggl[ \sum_{l=1}^{L}
( 2l+1 ) l^{1+s}\sum_{l=1}^{L} (
2l+1 ) l^{1+s} ( \log l )^{2}- \Biggl( \sum
_{l=1}^{L} ( 2l+1 ) l^{1+s}\log l
\Biggr)^{2} \Biggr].
\]
Then, for $s\in\mathbb{R}$:%
%
\begin{equation}
\label{sumLcorol} \lim_{L\rightarrow\infty}\frac{1}{L^{4+2s}}Z_{L} ( s ) =
\frac
{1}{%
4 ( 1+{s}/{2} )^{4}}.
\end{equation}
Moreover, let $L_{1}=1+L\cdot ( 1-g ( L )  ) $,
where $%
0<g ( L ) <1$ is such that $\lim_{L\rightarrow\infty}g (
L ) =0$. If
%
\begin{eqnarray}
\label{Zdef} Z_{L;g ( L ) } ( s ) &=&\sum_{l=L_{1}}^{L}(2l+1)l^{1+s}
\sum_{l=L_{1}}^{L}(2l+1)l^{1+s} \bigl(
\log^{2}l \bigr) \nonumber\\[-8pt]\\[-8pt]
&&{}- \Biggl( \sum_{l=L_{1}}^{L}(2l+1)l^{1+s}
\log l \Biggr)^{2},\nonumber
\end{eqnarray}
we have
%
\begin{equation}
\label{Ubello1} \lim_{L\rightarrow\infty}\frac{1}{L^{4 ( 1+{s}/{2} )
}g^{4} ( L ) }Z_{L;g ( L ) } ( s ) =K
( s ),
\end{equation}
where%
\[
K ( s ) =\frac{1}{ ( 1+{s}/{2} )^{2}} \biggl( \frac
{1}{%
12}s^{2}-
\frac{1}{8}s+\frac{1}{3} \biggr).
\]
\end{proposition}

Note that for $s=0$,
%
\begin{equation}
\label{K0} K_{0}=K ( s )\vert_{s=0}=\tfrac{1}{3}.
\end{equation}
\begin{pf*}{Proof of Proposition \ref{sumLintegralscomplete}}
We start by observing that
\begin{eqnarray*}
&& \Biggl( \sum_{l=L_{1}}^{L}(2l+1)l^{s}
\log^{2}l \Biggr) \Biggl( \sum_{l=L_{1}}^{L}(2l+1)l^{s}
\Biggr)
\\
&&\quad=\frac{ ( L^{2 ( 1+{s}/{2} ) }-L_{1}^{2(1+{s}/{2})} )^{2}}{ (
1+{s}/{2} )^{2}} \biggl( \frac{1}{ 
2 ( 1+{s}/{2} )^{2}}+\frac{\log L}{ ( 1+{s/2} )
}+
\log^{2}L \biggr)
\\
&&\qquad{}+\frac{ ( L^{2(1+{s}/{2})}-L_{1}^{2(1+{s}/{2})} ) L_{1}^{2(1+{s}/{2})}}{ ( 1+
{s/2}
)^{3}}\log \bigl( 1-g ( L ) \bigr) \\
&&\qquad\hspace*{11pt}{}\times\biggl(
\frac
{1}{ ( 1+%
{s/2} ) }-2\log L-\log^{2} \bigl( 1-g ( L ) \bigr) \biggr) +
\RMo_{L} ( 1 );
\\
&&\Biggl( \sum_{l=L_{1}}^{L}(2l+1)l^{s}
\log l \Biggr)^{2}
\\
&&\quad=\frac{ ( L^{2(1+{s}/{2})}-L_{1}^{2(1+{s}/{2})} )^{2}}{ (
1+{s/2} )^{2}} \biggl( \frac
{1}{%
4 ( 1+{s/2} )^{2}}-\frac{\log L}{ ( 1+{s/2} )
}+
\log^{2}L \biggr)
\\
&&\qquad{}+\frac{ ( L^{2(1+{s}/{2})}-L_{1}^{2(1+{s}/{2})} ) }{ (
1+{s/2} )^{2}}L_{1}^{2(1+{s}/{2})}\log \bigl( 1-g ( L ) \bigr)
\biggl( \frac
{1}{ ( 1+%
{s/2} ) }-2\log L \biggr)
\\
&&\qquad{}+\frac{L_{1}^{4(1+{s}/{2})}\log^{2} ( 1-g (
L )  ) }{ ( 1+{s/2} )^{2}}+\RMo_{L} ( 1 ),
\end{eqnarray*}
so we obtain%
\begin{eqnarray*}
Z_{L,g ( L ) } ( s ) &=&\frac{ ( L^{2(1+{s}/{2})}-L_{1}^{2(1+{s}/{2})}
)^{2}}{4 (
1+%
{s/2} )^{4}}\\
&&{}-\frac{L^{2(1+{s}/{2})}L_{1}^{2(1+{s}/{2})}\log^{2} (
1-g ( L ) ) }{ ( 1+{s/2} )^{2}}+ \RMo_{L} ( 1 )
\\
&=&\frac{L^{4(1+{s}/{2})} (  ( 1- (
1-g (
L )  )^{2(1+{s}/{2})} )  )^{2}}{%
4 ( 1+{s/2} )^{4}}
\\
&&{}-\frac{L^{4(1+{s}/{2})} ( 1-g ( L )
)^{2(1+{s}/{2})}\log^{2} ( 1-g ( L )
) }{%
( 1+{s/2} )^{2}}+\RMo_{L} ( 1 ).
\end{eqnarray*}
Observe that%
\begin{eqnarray*}
\log^{2}\bigl(1-g ( L) \bigr) &=& \bigl( -g ( L ) -
\tfrac{1}{2}%
g^{2} ( L ) -\tfrac{1}{3}g^{3}
( L ) +\RMO \bigl( g^{4} ( L ) \bigr) \bigr)^{2}
\\
&=&g^{2} ( L ) +g^{3} ( L ) + \bigl( \tfrac
{11}{12}
\bigr) g^{4} ( L ) +\RMo \bigl( g^{4} ( L ) \bigr),
\end{eqnarray*}
while%
%
\begin{eqnarray}
\label{venerdi} \frac{ ( 1-g ( L )  )^{2(1+{s}/{2})}}{%
( 1+{s/2} ) } &=&\frac{1}{ ( 1+{s/2} ) }%
-2g ( L ) +
\biggl( 2 \biggl( 1+\frac{s}{2} \biggr) -1 \biggr) g^{2} ( L )
\nonumber\\[-8pt]\\[-8pt]
&&{}-\frac{ ( 2 ( 1+{s/2} ) -1 )  ( 2 ( 1+
{s/2} ) -2 ) }{3}g^{3} ( L ) +\RMo \bigl( g^{3} ( L )
\bigr).\nonumber
\end{eqnarray}

Thus%
\begin{eqnarray*}
&& \frac{L^{4(1+{s}/{2})} (  ( 1- ( 1-g (
L )  )^{2(1+{s}/{2})} )  )^{2}}{%
4 ( 1+{s/2} )^{4}}
\\
&&\quad=\frac{L^{4(1+{s}/{2})}g^{2} ( L ) }{ ( 1+
{s/2} )^{2}} \bigl[ 1+ ( s+1 ) g ( L ) +\tfrac{1%
}{4} (
s+1 ) \bigl( \tfrac{7}{3}s+1 \bigr) g^{2} ( L ) 
\bigr] +\RMo \bigl( L^{4}g^{4} ( L ) \bigr),
\end{eqnarray*}
while simple calculations lead to%
\begin{eqnarray*}
&& \frac{L^{4(1+{s}/{2})} ( 1-g ( L )  )^{2(1+{s}/{2})}\log^{2} ( 1-g ( L )
) }{%
( 1+{s/2} )^{2}}
\\
&&\quad=\frac{L^{4(1+{s}/{2})}g^{2} ( L ) }{ ( 1+
{s/2} )^{2}} \biggl( 1+ ( s+1 ) g ( L ) + \biggl(
\frac{s^{2}}{2}+\frac{23}{24}s-\frac{1}{12} \biggr)
g^{2} ( L ) \biggr) \\
&&\qquad{}+\RMo \bigl( L^{4}g^{4} ( L )
\bigr).
\end{eqnarray*}
By using (\ref{Ubello1}), we have%
\[
Z_{L,g ( L ) } ( s ) =\frac{L^{4(1+{s}/{2})}g^{4} ( L ) }{ (
1+{s/2} )^{2}}K ( s ) +\RMo \bigl( L^{4}g^{4} ( L ) \bigr)
\]
as claimed.
\end{pf*}

\subsection*{Asymptotic Gaussianity}

In this subsection, we present the analysis of the fourth-order cumulants.
%
\begin{lemma}
\label{cumulants}Let $A_{l}$ and $B_{l}$ be defined as in (\ref{Al}) and
(\ref{Bl}). As $L\rightarrow\infty$,
\[
\frac{1}{L^{4}}\operatorname{cum} \biggl\{ \sum_{l_{1}}(A_{l_{1}}+B_{l_{1}}),
\sum_{l_{2}}(A_{l_{2}}+B_{l_{2}}), \sum
_{l_{3}}(A_{l_{3}}+B_{l_{3}}),\sum
_{l_{4}}(A_{l_{4}}+B_{l_{4}}) \biggr\}
=\RMO_{L}\biggl(\frac{\log^{4}L}{L^{2}}\biggr).
\]
\end{lemma}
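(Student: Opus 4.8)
The plan is to exploit the fact that, for distinct multipoles, the empirical spectra are built from independent families of Gaussian coefficients, so that the whole statistic collapses to a single linear combination of independent second-chaos variables whose cumulants are known explicitly. Concretely, I set $X_{l}:=\widehat{C}_{l}/C_{l}-1$. From the representation recalled in Section \ref{sec: sphwhittle}, $(2l+1)(1+X_{l})=(2l+1)\widehat{C}_{l}/C_{l}$ is a sum of $2l+1$ squared i.i.d. standard Gaussians, so $(2l+1)(1+X_{l})\sim\chi^{2}_{2l+1}$, and the families $\{X_{l}\}$ are mutually independent across $l$. Since the $r$-th cumulant of a $\chi^{2}_{n}$ variable equals $2^{r-1}(r-1)!\,n$, rescaling by $(2l+1)^{-1}$ gives $cum_{r}(X_{l})=2^{r-1}(r-1)!\,(2l+1)^{1-r}$; in particular $cum_{2}(X_{l})=2/(2l+1)$, consistent with (\ref{varG}), and $cum_{4}(X_{l})=48/(2l+1)^{3}$.

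Next I would rewrite the whole sum as a single linear form in the $X_{l}$. By (\ref{A_l}) one has $A_{l}=(2l+1)(\log l)X_{l}$, while
\begin{equation*}
\frac{\widehat{G}_{L}(\alpha_{0})}{G_{0}}-1=\frac{1}{\sum_{l'=1}^{L}(2l'+1)}\sum_{l'=1}^{L}(2l'+1)X_{l'}+(\text{negligible}),
\end{equation*}
the remainder collecting a deterministic term and a linear form in the $X_{l'}$ with coefficients smaller by a factor $O(1/l')$, arising from the discrepancy between $C_{l}$ and $G_{0}l^{-\alpha_{0}}$ under Condition \ref{A1}. Substituting into (\ref{B_l}) and collecting coefficients,
\begin{equation*}
\sum_{l}(A_{l}+B_{l})=\sum_{l}c_{l}X_{l}+(\text{negligible}),\qquad c_{l}=(2l+1)\left(\log l+\frac{\sum_{l'=1}^{L}(2l'+1)\log l'}{\sum_{l'=1}^{L}(2l'+1)}\right).
\end{equation*}
The purpose of this step is that the coupling across frequencies introduced by $\widehat{G}_{L}$ has now been absorbed entirely into the scalar weights $c_{l}$.

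The fourth cumulant then collapses to a diagonal sum. By multilinearity of joint cumulants together with the independence of the $X_{l}$ across $l$ — which forces every joint cumulant with a non-constant index to vanish — I obtain
\begin{equation*}
cum\left\{\sum_{l_{1}}(A_{l_{1}}+B_{l_{1}}),\ldots,\sum_{l_{4}}(A_{l_{4}}+B_{l_{4}})\right\}=\sum_{l}c_{l}^{4}\,cum_{4}(X_{l})+(\text{lower order})=48\sum_{l=1}^{L}\frac{c_{l}^{4}}{(2l+1)^{3}}+(\text{lower order}).
\end{equation*}
Finally I would bound the weights crudely: since $0\le\log l\le\log L$ and likewise the weighted average of $\log l'$ lies in $[0,\log L]$ for every $1\le l\le L$, we have $|c_{l}|\le 2(2l+1)\log L$, hence $c_{l}^{4}/(2l+1)^{3}\le 16(2l+1)\log^{4}L$, and using $\sum_{l=1}^{L}(2l+1)=O(L^{2})$,
\begin{equation*}
cum\left\{\sum_{l_{1}}(A_{l_{1}}+B_{l_{1}}),\ldots,\sum_{l_{4}}(A_{l_{4}}+B_{l_{4}})\right\}=O(L^{2}\log^{4}L),
\end{equation*}
so that dividing by $L^{4}$ yields the claimed rate $O_{L}(\log^{4}L/L^{2})$.

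The genuinely routine steps are multilinearity, independence and the explicit $\chi^{2}$ cumulants; the one place calling for a little care is the control of the terms labelled \emph{negligible}. There one must verify that the $O(1/l)$ corrections to $\widehat{G}_{L}/G_{0}-1$ contribute only linear combinations of the $X_{l}$ with coefficients smaller by a factor $1/l$, so that by the same diagonalization their fourth cumulant is of strictly smaller order and cannot spoil the bound. I note also that the crude coefficient estimate is insensitive to the sign of the weighted-mean term, so the argument is unaffected by whether the statistic is written with $A_{l}+B_{l}$ or with the centred combination.
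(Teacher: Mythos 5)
Your proposal is correct and rests on the same two facts as the paper's own proof --- mutual independence of the $\widehat{C}_{l}$ across multipoles and the $O(l^{-3})$ (in your version exactly $48/(2l+1)^{3}$) fourth cumulant of $\widehat{C}_{l}/C_{l}$ --- so the mathematical content is identical; you merely pre-collect the $A_{l}$ and $B_{l}$ contributions into a single linear form $\sum_{l}c_{l}X_{l}$ before invoking multilinearity, whereas the paper expands the joint cumulant into the five mixed $A/B$ cases and bounds each one separately. Your bookkeeping is arguably cleaner, and your closing remark that the crude bound is insensitive to the sign of the weighted-mean term correctly disposes of the only delicate point (the $A_{l}+B_{l}$ versus $A_{l}-B_{l}$ form of the centred score).
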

\begin{pf}
It is readily checked that%
\begin{eqnarray*}
\operatorname{cum} \biggl\{ \frac{\widehat{C}_{l}}{C_{l}},\frac{\widehat
{C}_{l}}{C_{l}},\frac{%
\widehat{C}_{l}}{C_{l}},
\frac{\widehat{C}_{l}}{C_{l}} \biggr\}
&=&\RMO\bigl(l^{-3}\bigr),
\end{eqnarray*}
\begin{eqnarray*}
&&\operatorname{cum} \biggl\{ \frac{\widehat{G}_{L}(\alpha_{0})}{G_{0}},\frac{\widehat
{G}%
_{L}(\alpha_{0})}{G_{0}},\frac{\widehat{G}_{L}(\alpha_{0})}{G_{0}},
\frac{%
\widehat{G}_{L}(\alpha_{0})}{G_{0}} \biggr\}
\\
&&\quad=\frac{1}{L^{8}}\sum_{l}(2l+1)^{4}\operatorname{cum}
\biggl\{ \frac{\widehat
{C}_{l}}{C_{l}}, \frac{\widehat{C}_{l}}{C_{l}},\frac{\widehat{C}_{l}}{C_{l}},
\frac
{\widehat{C}%
_{l}}{C_{l}} \biggr\} =\RMO\bigl(L^{-6}\bigr).
\end{eqnarray*}
The proof can be divided into 5 cases:
\begin{enumerate}
\item
\begin{eqnarray*}
&&\frac{1}{L^{4}}\operatorname{cum} \biggl\{ \sum_{l_{1}}A_{l_{1}},
\sum_{l_{2}}A_{l_{2}},\sum
_{l_{3}}A_{l_{3}}, \sum_{l_{4}}A_{l_{4}}
\biggr\}
\\
&&\quad=\frac{1}{L^{4}}\sum_{l}(2l+1)^{4}
\bigl\{ \log^{4}l \bigr\} \operatorname{cum} \biggl\{ \frac{\widehat{C}_{l}}{C_{l}},
\frac{\widehat{C}_{l}}{C_{l}},\frac
{\widehat{C}%
_{l}}{C_{l}},\frac{\widehat{C}_{l}}{C_{l}} \biggr\}
\\
&&\quad=\RMO\biggl(\frac{1}{L^{4}}\sum_{l}(2l+1)^{2}
\log^{4}l\biggr)=\RMO\biggl(\frac{\log^{4}L}{L^{2}}\biggr);
\end{eqnarray*}

\item
\begin{eqnarray*}
&&\frac{1}{L^{4}}\operatorname{cum} \biggl\{ \sum_{l_{1}}B_{l_{1}},
\sum_{l_{2}}B_{l_{2}},\sum
_{l_{3}}B_{l_{3}}, \sum_{l_{4}}B_{l_{4}}
\biggr\}
\\
&&\quad=\frac{1}{L^{4}} \biggl\{ \sum_{l}(2l+1)
\log l \biggr\}^{4}\operatorname{cum} \biggl\{ \frac{%
\widehat{G}_{L}(\alpha_{0})}{G_{0}},
\frac{\widehat{G}_{L}(\alpha_{0})}{%
G_{0}},\frac{\widehat{G}_{L}(\alpha_{0})}{G_{0}},\frac{\widehat{G}%
_{L}(\alpha_{0})}{G_{0}} \biggr\}
\\
&&\quad=\frac{1}{L^{4}} \biggl\{ \sum_{l}(2l+1)
\log l \biggr\}^{4}\frac
{1}{L^{6}}=\RMO\biggl(%
\frac{\log^{4}L}{L^{2}}\biggr);
\end{eqnarray*}

\item
\begin{eqnarray*}
&& \frac{1}{L^{4}}\operatorname{cum} \biggl\{ \sum_{l_{1}}A_{l_{1}},
\sum_{l_{2}}B_{l_{2}},\sum
_{l_{3}}B_{l_{3}}, \sum_{l_{4}}B_{l_{4}}
\biggr\}
\\
&&\quad=\frac{1}{L^{4}} \biggl\{ \sum_{l_{1}}(2l_{1}+1)
\log l_{1} \biggr\}^{3}\\
&&\qquad{}\times\sum_{l_{2}}(2l_{2}+1)
\{ \log l_{2} \} \operatorname{cum} \biggl\{ \frac{%
\widehat{C}_{l_{2}}}{C_{l_{2}}},\frac{\widehat{G}_{L}(\alpha_{0})}{G_{0}},
\frac{\widehat{G}_{L}(\alpha_{0})}{G_{0}},\frac{\widehat{G}_{L}(\alpha_{0})%
}{G_{0}} \biggr\}
\\
&&\quad=\frac{1}{L^{10}} \biggl\{ \sum_{l_{1}}(2l_{1}+1)
\log l_{1} \biggr\}^{3}\sum_{l_{2}}(2l_{2}+1)
\log l_{2}
\\
&&\qquad{}\times \operatorname{cum} \biggl\{ \frac{\widehat{C}_{l_{2}}}{C_{l_{2}}},\sum
_{l_{3}}(2l_{3}+1)\frac{\widehat{C}_{l_{3}}}{C_{l_{3}}},\sum
_{l_{3}}(2l_{4}+1)\frac{\widehat{C}_{l_{4}}}{C_{l_{4}}},\sum
_{l_{5}}(2l_{5}+1)\frac{\widehat{C}_{l_{5}}}{C_{l_{5}}} \biggr\}
\\
&&\quad=\frac{\log^{3}L}{L^{4}}\sum_{l}(2l+1)^{4}
\{ \log l \} \operatorname{cum} \biggl\{ \frac{\widehat{C}_{l}}{C_{l}},\frac{\widehat{C}_{l}}{C_{l}},
\frac
{\widehat{C}%
_{l}}{C_{l}},\frac{\widehat{C}_{l}}{C_{l}} \biggr\}
\\
&&\quad=\RMO\biggl(\frac{\log^{3}L}{L^{4}}\sum_{l}(2l+1)
\log l\biggr)=\RMO\biggl(\frac{\log^{4}L}{L^{2}}\biggr);
\end{eqnarray*}

\item
\begin{eqnarray*}
&&\frac{1}{L^{4}}\operatorname{cum} \biggl\{ \sum_{l_{1}}A_{l_{1}},
\sum_{l_{2}}A_{l_{2}},\sum
_{l_{3}}B_{l_{3}}, \sum_{l_{4}}B_{l_{4}}
\biggr\}
\\
&&\quad=\frac{1}{L^{4}}\sum_{l}(2l+1)^{2}
\log^{2}l\operatorname{cum} \biggl\{ \frac{\widehat
{C}_{l}%
}{C_{l}},\frac{\widehat{C}_{l}}{C_{l}},\sum
_{l_{3}}(2l_{3}+1)\log l_{3}
\frac{%
\widehat{G}_{L}(\alpha_{0})}{G_{0}},\\
&&\qquad\hspace*{115.5pt}\sum_{l_{3}}(2l_{4}+1)
\log l_{4}\frac{%
\widehat{G}_{L}(\alpha_{0})}{G_{0}} \biggr\}
\\
&&\quad=\frac{1}{L^{8}} \biggl\{ \sum_{l}(2l+1)
\log l \biggr\}^{2}\\
&&\qquad{}\times\sum_{l}(2l+1)^{2}
\bigl\{ \log^{2}l \bigr\} \operatorname{cum} \biggl\{ \frac
{\widehat{C}%
_{l}}{C_{l}},
\frac{\widehat{C}_{l}}{C_{l}},\sum_{l_{3}}(2l_{3}+1)
\frac{%
\widehat{C}_{l_{3}}}{C_{l_{3}}},\sum_{l_{4}}(2l_{4}+1)
\frac{\widehat{C}%
_{l_{4}}}{C_{l_{4}}} \biggr\}
\\
&&\quad=\frac{1}{L^{8}} \biggl\{ \sum_{l}(2l+1)
\log l \biggr\}^{2}\sum_{l}(2l+1)^{4}
\bigl\{ \log^{2}l \bigr\} \operatorname{cum} \biggl\{ \frac
{\widehat{C}%
_{l}}{C_{l}},
\frac{\widehat{C}_{l}}{C_{l}},\frac{\widehat
{C}_{l}}{C_{l}}, \frac{\widehat{C}_{l}}{C_{l}} \biggr\}
\\
&&\quad=\frac{K}{L^{8}} \biggl\{ \sum_{l}(2l+1)
\log l \biggr\}^{2}\sum_{l}(2l+1)
\log^{2}l=\RMO\biggl(\frac{\log^{4}L}{L^{2}}\biggr);
\end{eqnarray*}

\item
\begin{eqnarray*}
&& \frac{1}{L^{4}}\operatorname{cum} \biggl\{ \sum_{l_{1}}A_{l_{1}},
\sum_{l_{2}}A_{l_{2}},\sum
_{l_{3}}A_{l_{3}}, \sum_{l_{4}}B_{l_{4}}
\biggr\}
\\
&&\quad=\frac{1}{L^{4}}\sum_{l}(2l+1)^{3}
\bigl\{ \log^{3}l \bigr\} \operatorname{cum} \biggl\{ \frac{\widehat{C}_{l}}{C_{l}},
\frac{\widehat{C}_{l}}{C_{l}},\frac
{\widehat{C}%
_{l}}{C_{l}},\sum_{l_{1}}(2l_{1}+1)
\log l_{1}\frac{\widehat
{G}_{L}(\alpha_{0})}{G_{0}} \biggr\}
\\
&&\quad=\frac{1}{L^{6}} \biggl\{ \sum_{l_{1}}(2l_{1}+1)
\log l_{1} \biggr\} \sum_{l}(2l+1)^{3}
\bigl\{ \log^{3}l \bigr\} \operatorname{cum} \biggl\{ \frac{\widehat
{C}_{l}%
}{C_{l}},
\frac{\widehat{C}_{l}}{C_{l}},\frac{\widehat
{C}_{l}}{C_{l}},\sum_{l_{2}}(2l_{2}+1)
\frac{\widehat
{C}_{l_{2}}}{C_{l_{2}}} \biggr\}
\\
&&\quad=\frac{1}{L^{6}} \biggl\{ \sum_{l_{1}}(2l_{1}+1)
\log l_{1} \biggr\} \sum_{l}(2l+1)^{4}
\bigl\{ \log^{3}l \bigr\} \operatorname{cum} \biggl\{ \frac{\widehat
{C}_{l}%
}{C_{l}},
\frac{\widehat{C}_{l}}{C_{l}},\frac{\widehat
{C}_{l}}{C_{l}},\frac{%
\widehat{C}_{l}}{C_{l}} \biggr\}
\\
&&\quad=\frac{1}{L^{6}} \biggl\{ \sum_{l_{1}}(2l_{1}+1)
\log l_{1} \biggr\} \sum_{l}(2l+1)
\log^{3}l=\RMO\biggl(\frac{\log^{4}L}{L^{2}}\biggr).
\end{eqnarray*}\upqed
\end{enumerate}
\end{pf}

\subsection*{Estimation with noise}
%
\begin{lemma}
Under Conditions \ref{A1} and \ref{gammacond}, with $0<\alpha_{0}-\gamma<1$, for all $2>\alpha_{0}-\alpha>\varepsilon>0$, as
$L\rightarrow
\infty$, we have%
\[
\sup_{\alpha}\biggl\llvert \log\frac{\widetilde{G}_{k}(\alpha
)}{G_{k}(\alpha
)}\biggr\rrvert =
\RMo_{p}(1).
\]
On the other hand, if $\alpha_{0}-\alpha\geq2$,
\[
\sup_{\alpha}\biggl\llvert \log\frac{\widetilde{G}_{k}(\alpha
)}{G_{k}(\alpha
)}\biggr\rrvert =
\RMO_{p}(1).
\]
\end{lemma}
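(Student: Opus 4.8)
The plan is to follow the proof of Lemma~\ref{lemmagiovedi} almost verbatim, replacing the noiseless empirical spectrum $\widehat{C}_l$ by its noise-corrected counterpart $\widetilde{C}_l$ and tracking the single place where the probabilistic input changes, namely the per-frequency fluctuation size. Writing $\widetilde{G}_k(\alpha)=\{\sum_l(2l+1)\}^{-1}\sum_l(2l+1)(\log l)^k\widetilde{C}_l/l^{-\alpha}$, I first record that $\mathbb{E}\widetilde{C}_l=C_{T,l}=G_0l^{-\alpha_0}(1+O(l^{-1}))$ under Condition~\ref{A1}, so that exactly as in Lemma~\ref{lemmagiovedi} one decomposes
\[
\frac{\widetilde{G}_k(\alpha)}{G_k(\alpha)}-1=\underbrace{\frac{\sum_l v_l(\alpha)\,O(l^{-1})}{\sum_l v_l(\alpha)}}_{=O(1/L)\text{, uniform in }\alpha}+\frac{\sum_l v_l(\alpha)\,\eta_l}{\sum_l v_l(\alpha)},\qquad v_l(\alpha):=(2l+1)(\log l)^k l^{\alpha-\alpha_0},
\]
where $\eta_l:=\widetilde{C}_l/C_{T,l}-1$ are independent across $l$ and centred. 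The deterministic summand is handled as before; everything reduces to controlling $F(\alpha):=\sum_l v_l(\alpha)\eta_l/\sum_l v_l(\alpha)$ uniformly in $\alpha$.

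The one genuinely new ingredient is the size of $\eta_l$. Since $\widetilde{C}_l$ is a noise-subtracted chi-square average, one has the exact factorization $\eta_l=(1+\tfrac{G_N}{G_0}l^{\alpha_0-\gamma})\,\zeta_l$, where $\zeta_l$ is the standardized fluctuation of a $(2l+1)$-fold chi-square average and hence obeys $\mathbb{E}[\sqrt{2l+1}\,|\zeta_l|]^{M}=O(1)$ uniformly in $l$ (precisely the bound quoted from \cite{marpecbook} in Lemma~\ref{lemmagiovedi}). In the regime $0<\alpha_0-\gamma<1$ this gives $\mathrm{Var}(\eta_l)\sim(G_N/G_0)^2 l^{2(\alpha_0-\gamma)-1}$, so that the natural standardizing power is $l^{\rho}$ with $\rho=\tfrac12-(\alpha_0-\gamma)$, and $l^{\rho}\eta_l$ has uniformly bounded moments of every order. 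A direct second-moment computation then yields, uniformly over the compact $\alpha$-range, $\mathrm{Var}(F(\alpha))\sim C(\alpha)L^{-2(1-(\alpha_0-\gamma))}$, which tends to zero precisely because $\alpha_0-\gamma<1$. This is the quantitative reason why noise degrades but does not destroy consistency up to the threshold $\alpha_0=\gamma+1$.

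The main obstacle is the passage from pointwise to uniform control of $F(\alpha)$. When $\alpha_0-\gamma<\tfrac12$ one has $\rho>0$ and the maximal-inequality argument of Lemma~\ref{lemmagiovedi} goes through unchanged: bounding $|\eta_l|\le l^{-\rho}\sup_{l'}(l')^{\rho}|\eta_{l'}|$ costs a factor $\sum_l v_l l^{-\rho}/\sum_l v_l\sim L^{-\rho}$, leaving room for the diverging threshold $L^{\rho}$ in $\Pr\{\sup_l(l)^{\rho}|\eta_l|>\delta L^{\rho}\}$, which a union bound plus the uniform moment estimate makes $o(1)$. When $\tfrac12\le\alpha_0-\gamma<1$ this crude bound fails, since $\rho\le0$ makes the standardizing weight decrease in $l$ while the summation weights increase, so the supremum is controlled at low frequencies whereas the sum lives at high ones. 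I would circumvent this by a chaining/net argument exploiting that each $F(\alpha)$ lies in the second Wiener chaos: hypercontractivity upgrades the variance bound to $\mathbb{E}|F(\alpha)|^{M}\le C_M L^{-M(1-(\alpha_0-\gamma))}$, a union bound over a polynomially fine net $\{\alpha_j\}$ of the $\alpha$-range gives $\max_j|F(\alpha_j)|=o_p(1)$ for $M$ large, and the increments between net points are absorbed by a crude polynomial bound on $\partial_\alpha F$ (whose only extra cost is the $\log l=\partial_\alpha\log l^{\alpha-\alpha_0}$ factors). This delivers $\sup_\alpha|F(\alpha)|=o_p(1)$, hence $\sup_\alpha|\log(\widetilde{G}_k/G_k)|=o_p(1)$, throughout $0<\alpha_0-\gamma<1$.

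For the range $\alpha_0-\alpha\ge2$ the relevant sums converge and are dominated by low frequencies, where $\eta_l$ has $O(1)$ variance; the ratio $\widetilde{G}_k(\alpha)/G_k(\alpha)$ then has positive limiting mean and finite limiting variance and is therefore $O_p(1)$, giving the stated bound after taking logarithms. The delicate point here, absent in the noiseless Lemma~\ref{lemmagiovedi}, is that $\widetilde{C}_l$ need not be nonnegative, so $\widetilde{G}_k(\alpha)$ is not automatically positive and the logarithm is not defined with probability one. I would address this by working on the event $\{\widetilde{G}_k(\alpha)>0\}$, on which the bound holds; its complement only serves to assign an infinite value to the objective at such far-away $\alpha$, and thus reinforces, rather than undermines, the exclusion of that region in the consistency argument, where this term is in any case dominated by the diverging deterministic contribution $U_L$.
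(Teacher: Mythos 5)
Your proposal is correct and, for the core of the argument, follows the same route as the paper: the identical decomposition of $\widetilde{G}_k(\alpha)/G_k(\alpha)-1$ into a uniform $O(1/L)$ bias plus a normalized weighted sum of the centred fluctuations $\eta_l=\widetilde{C}_l/C_{T,l}-1$, and the same identification of the noise-inflated per-multipole scale (the paper renormalizes by $\sqrt{2l+1}\,l^{-(\alpha_0-\gamma)}$ exactly where you introduce $l^{\rho}$ with $\rho=\tfrac{1}{2}-(\alpha_0-\gamma)$). You genuinely diverge, and in fact improve on the printed proof, in two places. First, the paper closes the supremum step with a union bound plus a Chebyshev (second-moment) estimate, yielding $O(L^{1-2\beta})$ with $\beta<\tfrac{1}{2}$, which is not $o(1)$ as written; your uniform all-order moment control of $l^{\rho}\eta_l$, obtained from the exact factorization $\eta_l=(1+\tfrac{G_N}{G_0}l^{\alpha_0-\gamma})\zeta_l$ (a clean observation consistent with the paper's variance formula for $\widetilde{C}_l/C_{T,l}$), is the correct repair and mirrors the $M>1/\beta$ moment argument of Lemma \ref{lemmagiovedi}. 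Second, the paper's exponent bookkeeping ($L^{\beta-1/2+(\alpha_0-\gamma)}=o(1)$ with $\beta>0$) only closes for $\alpha_0-\gamma<\tfrac{1}{2}$, while the lemma is asserted on the whole range $0<\alpha_0-\gamma<1$; your hypercontractivity-plus-net chaining argument for $\tfrac{1}{2}\le\alpha_0-\gamma<1$ is a legitimate and more robust substitute that the paper does not supply. Your remark on the possible non-positivity of $\widetilde{G}_k(\alpha)$ in the case $\alpha_0-\alpha\ge 2$ addresses a point the paper passes over in silence (it only proves the case $\alpha_0-\alpha<2$); the only caveat is that to conclude $\sup_{\alpha}|\log(\widetilde{G}_k/G_k)|=O_p(1)$ one also needs the ratio bounded away from zero in probability, not merely $O_p(1)$ with positive limiting mean, but this gap is equally present in the paper's own noiseless argument and does not single out your proof.
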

\begin{pf}
For the sake of brevity, we report only the proof of the case where
$\alpha
-\alpha_{0}>-2$, using simplified parametric version of Condition \ref{A},
that is, we assume that we have exactly%
\[
C_{l}(\vartheta)=C_{l}(G_{0},
\alpha_{0})=G_{0}l^{-\alpha_{0}}.
\]
As for $\widehat{G}_{k}(\alpha)$,
\[
\frac{\widetilde{G}_{k}(\alpha)}{G_{k}(\alpha)}-1=\frac{%
\sum_{l=1}^{L}(2l+1) ( \log^{k}l ) G_{0}l^{\alpha-\alpha
_{0}} \{ {\widetilde{C}_{l}}/({G_{0}l^{-\alpha_{0}}})-1 \}
}{%
\sum_{l=1}^{L}(2l+1) ( \log^{k}l ) G_{0}l^{\alpha-\alpha
_{0}}}.
\]
Fixed $\max (  ( \alpha_{0}-\gamma ) -1/2,0 )
<\beta<%
\frac{1}{2}$, we have, for all $l$:
\begin{eqnarray*}
&& \Pr \biggl( \biggl\llvert \frac{\sum_{l=1}^{L}(2l+1)G_{0}l^{\alpha-\alpha
_{0}} ( \log^{k}l )  \{ {\widetilde{C}_{l}}/({%
G_{0}l^{-\alpha_{0}}})-1 \} }{\sum_{l=1}^{L}(2l+1) ( \log^{k}l ) G_{0}l^{\alpha-\alpha_{0}}}\biggr\rrvert >
\delta_{\varepsilon
} \biggr)
\\
&&\quad\leq\Pr \biggl( \sup_{l}\sqrt{(2l+1)}l^{- ( \alpha_{0}-\gamma
)
}\biggl
\llvert \frac{\widetilde{C}_{l}}{G_{0}l^{-\alpha_{0}}}-1\biggr\rrvert >\delta_{\varepsilon}L^{\beta}
\biggr),
\end{eqnarray*}
because
\[
L^{\beta}\frac{\sum_{l=1}^{L}\sqrt{(2l+1)} ( \log l )^{k}l^{ ( \alpha-\gamma ) }}{\sum_{l=1}^{L}(2l+1) ( \log
l )^{k}l^{\alpha-\alpha_{0}}}=CL^{\beta-1/2+ ( \alpha
_{0}-\gamma ) }=\RMo(1).
\]

Now%
\begin{eqnarray*}
&&
\Pr \biggl\{ \sup_{l}\sqrt{(2l+1)}l^{- ( \alpha_{0}-\gamma )
}\biggl\llvert
\frac{\widetilde{C}_{l}}{G_{0}l^{-\alpha_{0}}}-1\biggr\rrvert >\delta_{\varepsilon}L^{\beta} \biggr
\} \\
&&\quad\leq L\max_{l}\Pr \biggl\{ \sqrt{%
(2l+1)}l^{- ( \alpha_{0}-\gamma ) }
\biggl\llvert \frac
{\widetilde{C}%
_{l}}{G_{0}l^{-\alpha_{0}}}-1\biggr\rrvert >\delta_{\varepsilon
}L^{\beta
}
\biggr\}
\end{eqnarray*}
and%
\begin{eqnarray*}
&&
\Pr \biggl\{ \sqrt{(2l+1)}l^{- ( \alpha_{0}-\gamma ) } \biggl\llvert
\frac{\widetilde{C}_{l}}{G_{0}l^{-\alpha_{0}}}-1
\biggr\rrvert >\delta_{\varepsilon}L^{\beta} \biggr\} \\
&&\quad\leq C
\frac{\operatorname{Var} [ \sqrt{(2l+1)}%
l^{- ( \alpha_{0}-\gamma ) } ( {\widetilde{C}_{l}}/({%
G_{0}l^{-\alpha_{0}}})-1 )  ] }{\delta_{\varepsilon
}^{2}L^{2\beta}}
\\
&&\quad=\RMO\bigl(L^{-2\beta}\bigr),
\end{eqnarray*}
uniformly in $l$. Hence,
\[
\Pr \biggl\{ \sup_{l}\sqrt{(2l+1)}\biggl\llvert \frac{\widetilde{C}_{l}}{%
G_{0}l^{-\alpha_{0}}}-1
\biggr\rrvert >\delta_{\varepsilon}L^{\beta
} \biggr\} =\RMO
\bigl(L^{-2\beta+1}\bigr)=\RMo_{L}(1).
\]
\upqed
\end{pf}
%
\begin{lemma}
\label{consistency3} Under Conditions \ref{A1} and \ref{gammacond},
with $%
0<\alpha_{0}-\gamma<1$, as $L\rightarrow\infty$, we have%
\begin{eqnarray*}
\sup_{\alpha}\bigl\llvert T_{L} ( \alpha,\alpha_{0}
) \bigr\rrvert &=&\RMo_{p}(1)\qquad\mbox{for }\alpha_{0}-
\alpha<2,
\\
\sup_{\alpha}\bigl\llvert T_{L} ( \alpha,\alpha_{0}
) \bigr\rrvert &=&\RMO_{p}(1)\qquad\mbox{for }\alpha_{0}-
\alpha\geq2.
\end{eqnarray*}
\end{lemma}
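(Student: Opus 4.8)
The plan is to follow the proof of Lemma~\ref{consistency2} essentially verbatim, the only structural change being that every appeal to Lemma~\ref{lemmagiovedi} is replaced by an appeal to its noisy counterpart established in the preceding Lemma (which bounds $\sup_\alpha|\log\{\widetilde{G}_k(\alpha)/G_k(\alpha)\}|$ under Conditions~\ref{A1} and~\ref{gammacond} with $0<\alpha_0-\gamma<1$). Since the defining decomposition (\ref{intrigila2}) of $T_L(\alpha,\alpha_0)$ is unchanged when $\widehat{G}$ is replaced by $\widetilde{G}$, it suffices to treat the two summands separately. The term $\sup_\alpha|\log\{\widetilde{G}(\alpha)/G(\alpha)\}|=o_p(1)$ is immediate from the preceding Lemma with $k=0$, and the $O_p(1)$ assertion for $\alpha_0-\alpha\geq 2$ follows at once from the corresponding $O_p(1)$ bound there. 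Hence the only genuinely new work is to show that $\log\{\widetilde{G}(\alpha_0)/G(\alpha_0)\}\rightarrow_p 0$ when $\alpha_0-\alpha<2$.

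First I would write, as in Lemma~\ref{consistency2},
\[
\frac{\widetilde{G}(\alpha_0)}{G(\alpha_0)}-1=\frac{1}{\sum_{l=1}^{L}(2l+1)}\sum_{l=1}^{L}(2l+1)\left(\frac{\widetilde{C}_l}{G_0l^{-\alpha_0}}-1\right),
\]
and control its first two moments, upgrading to the logarithm through Slutsky's lemma at the end. The mean is harmless: $\widetilde{C}_l$ is unbiased for $C_{T,l}$ and Condition~\ref{A1} gives $C_{T,l}=G_0l^{-\alpha_0}\{1+O(l^{-1})\}$, so that $\mathbb{E}\{\widetilde{G}(\alpha_0)/G(\alpha_0)-1\}=\{\sum_{l=1}^{L}(2l+1)\}^{-1}\sum_{l=1}^{L}(2l+1)O(l^{-1})=O(L^{-1})\rightarrow 0$.

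The hard part will be the variance, and this is exactly where the hypothesis $0<\alpha_0-\gamma<1$ is used. Starting from
\[
Var\left(\frac{\widetilde{G}(\alpha_0)}{G_0}\right)=\left\{\frac{1}{\sum_{l=1}^{L}(2l+1)}\right\}^2\sum_{l=1}^{L}(2l+1)^2\,Var\left(\frac{\widetilde{C}_l}{G_0l^{-\alpha_0}}\right),
\]
I would insert the noise-inflated variance derived in Section~\ref{noise}, whose leading behaviour is $Var(\widetilde{C}_l/C_{T,l})\sim (2G_N^2/G_0^2)(2l+1)^{-1}l^{2(\alpha_0-\gamma)}$. This gives $(2l+1)^2Var(\widetilde{C}_l/C_{T,l})=O(l^{1+2(\alpha_0-\gamma)})$; since $1+2(\alpha_0-\gamma)>-1$, summation yields $\sum_{l\leq L}l^{1+2(\alpha_0-\gamma)}=O(L^{2+2(\alpha_0-\gamma)})$, and dividing by $\{\sum_{l=1}^{L}(2l+1)\}^2=O(L^4)$ gives
\[
Var\left(\frac{\widetilde{G}(\alpha_0)}{G_0}\right)=O\!\left(L^{-2+2(\alpha_0-\gamma)}\right)=o(1),
\]
the final step being valid precisely because $\alpha_0-\gamma<1$. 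Combining the vanishing mean and variance with Slutsky's lemma gives $\log\{\widetilde{G}(\alpha_0)/G(\alpha_0)\}\rightarrow_p 0$; adding the uniform bound on the second summand yields $\sup_\alpha|T_L(\alpha,\alpha_0)|=o_p(1)$ for $\alpha_0-\alpha<2$, and the case $\alpha_0-\alpha\geq 2$ has already been disposed of. I expect the bookkeeping in the variance sum---in particular verifying that the cross term $2(G_N/G_0)l^{\alpha_0-\gamma}$ and the lower-order remainders from Condition~\ref{A1} contribute only negligibly---to be the one place demanding care, though it is routine once the dominant power of $l$ has been identified.
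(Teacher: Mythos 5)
Your proposal matches the paper's own proof essentially step for step: the same decomposition of $T_{L}$ via (\ref{intrigila2}), the same appeal to the noisy analogue of Lemma \ref{lemmagiovedi} for the supremum term, and the same mean/variance computation at $\alpha_{0}$, in which the noise-inflated variance $Var(\widetilde{C}_{l}/C_{T,l})$ contributes the factor $l^{2(\alpha_{0}-\gamma)}$ and the hypothesis $\alpha_{0}-\gamma<1$ is exactly what makes $Var\{\widetilde{G}(\alpha_{0})/G(\alpha_{0})\}=O(L^{-2(1-(\alpha_{0}-\gamma))})=o(1)$, followed by Slutsky's lemma. The only difference is that the paper's proof additionally establishes consistency of $\widetilde{G}(\widetilde{\alpha}_{L})$, which is not part of the lemma as stated, so your argument covers precisely the claimed assertion.
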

\begin{pf}
For $\alpha_{0}-\alpha<2$, consider first%
\[
\frac{\widetilde{G}(\alpha_{0})}{G(\alpha_{0})}-1=\frac{1}{%
\sum_{l=1}^{L}(2l+1)}\sum_{l=1}^{L}(2l+1)
\biggl(\frac{\widetilde{C}_{l}}{%
G_{0}l^{-\alpha_{0}}}-1\biggr),
\]
where we have easily, as $L\rightarrow\infty$,
\begin{eqnarray*}
\mathbb{E} \biggl\{ \frac{\widetilde{G}(\alpha_{0})}{G(\alpha_{0})}%
-1 \biggr\} &=&
\frac{1}{\sum_{l=1}^{L}(2l+1)}\sum_{l=1}^{L}(2l+1)
\biggl(\frac{%
G_{0}l^{-\alpha_{0}} \{ 1+\RMO(l^{-1}) \} }{G_{0}l^{-\alpha
_{0}}}%
-1\biggr)\rightarrow0,
\\
\operatorname{Var} \biggl\{ \frac{\widetilde{G}(\alpha_{0})}{G(\alpha_{0})} \biggr\} &=& \biggl\{ \frac{1}{\sum_{l=1}^{L}(2l+1)} \biggr
\}^{2}2G_{N}^{2}\sum_{l=1}^{L}(2l+1)
\bigl( l^{2 ( \alpha_{0}-\gamma
) }+\RMO \bigl( l^{-\min ( 2\alpha_{0}, ( \gamma+\alpha
_{0} )  ) } \bigr) \bigr)
\\
&=&\RMO\biggl(\frac{1}{L^{4}}L^{2 ( 1+ ( \alpha_{0}-\gamma )
)
}\biggr)\RMO\biggl(
\frac{1}{L^{2 ( 1- ( \alpha_{0}-\gamma )  ) }}\biggr),
\end{eqnarray*}
whence by Slutzky's lemma%
\[
\biggl\{ \frac{\widetilde{G}(\alpha_{0})}{G(\alpha_{0})}\stackrel {\mathbb{P}}%
{\longrightarrow}1
\biggr\} \Rightarrow \biggl\{ \log\frac{\widehat{G}%
(\alpha_{0})}{G(\alpha_{0})}\stackrel{\mathbb{P}} {
\longrightarrow}%
0 \biggr\}.
\]
On the other hand, in view of Lemma \ref{lemmagiovedi}, we have that:%
\[
\sup_{\alpha}\biggl\llvert \log\frac{\widetilde{G}(\alpha_{0})}{G(\alpha_{0})}\biggr\rrvert =
\RMo_{p}(1),
\]
whence the result follows easily. The proof for $\alpha_{0}-\alpha
\geq2$
is immediate.

It remains to prove the consistency of $\widetilde{G}(\widetilde{\alpha
}%
_{L})$. Observe that
\begin{eqnarray*}
\widetilde{G}(\widetilde{\alpha}_{L})-G_{0} &=&
\frac{1}{\sum_{l=1}^{L}(2l+1)%
}\sum_{l=1}^{L}(2l+1)
\frac{\widetilde{C}_{l}}{l^{-\widetilde{\alpha
}_{L}}}-%
\frac{1}{\sum_{l=1}^{L}(2l+1)}\sum
_{l=1}^{L}(2l+1)\frac{G_{0}l^{-\alpha
_{0}}%
}{l^{-\alpha_{0}}}
\\
&=&\frac{1}{\sum_{l=1}^{L}(2l+1)}\sum_{l=1}^{L}(2l+1)G_{0}l^{- (
\alpha_{0}-\widetilde{\alpha}_{L} ) }
\biggl\{ \biggl( \frac
{\widetilde{C%
}_{l}}{G_{0}l^{-\alpha_{0}}}-1 \biggr) + \bigl( 1-l^{ ( \alpha_{0}-%
\widetilde{\alpha}_{L} ) }
\bigr) \biggr\}.
\end{eqnarray*}
Clearly%
\begin{eqnarray*}
\bigl\llvert \widetilde{G}(\widetilde{\alpha}_{L})-G_{0}
\bigr\rrvert &\leq &\Biggl\llvert \frac{1}{\sum_{l=1}^{L}(2l+1)}%
\sum
_{l=1}^{L}(2l+1)G_{0}l^{- ( \alpha_{0}-\widetilde{\alpha}%
_{L} ) }
\biggl\{ \biggl( \frac{\widetilde{C}_{l}}{G_{0}l^{-\alpha
_{0}}}%
-1 \biggr) \biggr\} \Biggr\rrvert
\\
&&{}+\Biggl\llvert \frac{G_{0}}{\sum_{l=1}^{L}(2l+1)}\sum_{l=1}^{L}(2l+1)%
\bigl( 1-l^{ ( \alpha_{0}-\widetilde{\alpha}_{L} ) } \bigr) \Biggr\rrvert =\llvert G_{A}\rrvert
+\llvert G_{B}\rrvert ,
\end{eqnarray*}
so that
\[
\Pr \bigl( \bigl\llvert \widetilde{G}(\widetilde{\alpha}_{L})-G_{0}
\bigr\rrvert \geq\varepsilon \bigr) \leq\Pr \biggl( \llvert G_{A}
\rrvert \geq\frac{\varepsilon}{2} \biggr) +\Pr \biggl( \llvert G_{B}
\rrvert \geq\frac{\varepsilon}{2} \biggr).
\]
Observe that:%
\begin{eqnarray*}
\Pr \biggl\{ \llvert G_{A}\rrvert \geq\frac{\varepsilon
}{2} \biggr\} &
\leq&\Pr \biggl\{ \biggl[ \llvert G_{A}\rrvert \geq\frac
{\varepsilon}{%
2}
\biggr] \cap \biggl[ \llvert \alpha_{0}-\widetilde{
\alpha}%
_{L}\rrvert <\frac{1}{3} \biggr] \biggr\} +
\Pr \biggl\{ \llvert \alpha_{0}-\widetilde{\alpha}_{L}
\rrvert \geq\frac{1}{3} \biggr\}
\\
&\leq&\Pr \Biggl\{ \Biggl[ \frac{1}{\sum_{l=1}^{L}(2l+1)}%
\sum
_{l=1}^{L}(2l+1)G_{0}l^{1/3}
\biggl\llvert \frac{\widetilde{C}_{l}}{%
G_{0}l^{-\alpha_{0}}}-1\biggr\rrvert \geq\varepsilon \Biggr] \Biggr\}
+\RMo_{L}(1)
\\
&\leq&\frac{1}{\varepsilon^{2}}\frac{1}{ ( \sum_{l=1}^{L}(2l+1) )^{2}}\sum_{l=1}^{L}(2l+1)^{2}G_{0}^{2}l^{2/3}\operatorname{Var}
\biggl( \frac{\widehat
{C}_{l}%
}{G_{0}l^{-\alpha_{0}}}-1 \biggr) +\RMo_{L}(1)
\\
&=&\RMO \biggl( \frac{L^{8/3+2 ( \alpha_{0}-\gamma )
}}{L^{4}} \biggr) =\RMo_{L}(1).
\end{eqnarray*}
As far as the second term is concerned, we have, for a suitably small $%
\delta>0$:%
\begin{eqnarray*}
\Pr \biggl( \llvert G_{B}\rrvert \geq\frac{\varepsilon}{2} \biggr) &=&
\Pr \biggl( \biggl[ \llvert G_{B}\rrvert \geq\frac{\varepsilon
}{2}%
\biggr] \cap \bigl[ \log l ( \alpha_{0}-\widetilde{\alpha
}_{L} ) %
\bigr] <\delta \biggr) +\Pr \bigl( \log l (
\alpha_{0}-\widetilde{ 
\alpha}_{L} ) \geq\delta
\bigr)
\\
&=&\Pr \biggl( \biggl[ \llvert G_{B}\rrvert \geq
\frac{\varepsilon
}{2}%
\biggr] \cap \bigl[ \log l ( \alpha_{0}-
\widetilde{\alpha }_{L} ) %
\bigr] <\delta \biggr) +
\RMo_{L}(1)
\end{eqnarray*}
and using $\llvert  \RMe^{-x}-1\rrvert \leq x$ for $0\leq x\leq1$, we
obtain%
\begin{eqnarray*}
\bigl\llvert l^{- ( \alpha_{0}-\widetilde{\alpha}_{L} )
}-1\bigr\rrvert &=&\bigl\llvert \exp\bigl(-\log l (
\alpha_{0}-\widetilde {\alpha }_{L} )\bigr) -1\bigr\rrvert \leq 
\log l\llvert \alpha_{0}-\widetilde{ 
\alpha}_{L}
\rrvert ,\\[-27pt]
\end{eqnarray*}
\begin{eqnarray*}
&&\Pr \biggl( \biggl[ \llvert G_{B}\rrvert \geq\frac{\varepsilon
}{2}%
\biggr] \cap \bigl[ \log l ( \alpha_{0}-\widetilde{\alpha
}_{L} ) %
\bigr] <\delta \biggr)
\\
&&\quad\leq\Pr \Biggl( \frac{G_{0}}{\sum_{l=1}^{L}(2l+1)}\sum_{l=1}^{L}(2l+1)
\bigl\llvert \bigl( l^{- ( \alpha_{0}-\widetilde{\alpha}_{L} )
}-1 \bigr) \bigr\rrvert \geq
\frac{\varepsilon}{2}\cap \bigl[ \log l ( \alpha_{0}-\widetilde{
\alpha}_{L} ) \bigr] <\delta \Biggr)
\\
&&\quad\leq\frac{1}{\varepsilon^{2}}\operatorname{Var} \Biggl\{ \frac{G_{0}}{\sum_{l=1}^{L}(2l+1)%
}\sum
_{l=1}^{L}(2l+1)\log l\llvert \alpha_{0}-
\widetilde{\alpha}%
_{L}\rrvert \Biggr\}
\\
&&\quad=\RMO \biggl( \frac{1}{L^{4}}L^{2}\log L\frac{1}{L^{2-2 ( \alpha
_{0}-\gamma ) }}
\biggr) =\RMo_{L}(1),
\end{eqnarray*}
where we have used
\[
\operatorname{Var} ( \alpha_{0}-\widetilde{\alpha}_{L} ) =\RMO\biggl(
\frac{1}{%
L^{2-2 ( \alpha_{0}-\gamma ) }}\biggr),
\]
which under Condition \ref{A1} will be established in the proof of
Theorem %
\ref{theonoise}.
\end{pf}

Finally, we provide the proof of the central limit theorem in the
presence of
observational noise.
\begin{pf*}{Proof of Theorem \ref{theonoise}}
The main difference with the argument in the
noiseless case concerns the variance of the score $\overline
{S}_{L}(\alpha_{0})$; we just sketch the main steps and leave the details to the reader.
Indeed, we can split $\operatorname{Var} \{ \overline{S}_{L}(\alpha_{0}) \}
$ as
\[
\operatorname{Var} \bigl\{ \overline{S}_{L}(\alpha_{0}) \bigr\}
=V_{1}+V_{2}+V_{3},
\]
where%
\begin{eqnarray*}
V_{1}&=& \biggl\{ \frac{1}{\sum_{l=1}^{L}(2l+1)} \biggr\}^{2}\sum
_{l=1}^{L} ( 2l+1 )^{2} ( \log l
)^{2}\operatorname{Var} \biggl\{ \frac{\widetilde{C}_{l}}{G_{0}l^{-\alpha_{0}}} \biggr\},
\\
V_{2}&=& \biggl\{ \frac{1}{\sum_{l=1}^{L}(2l+1)} \biggr\}^{2} \Biggl( \sum
_{l=1}^{L} ( 2l+1 ) \log l
\Biggr)^{2}\operatorname{Var} \biggl( \frac{%
\widetilde{G}(\alpha_{0})}{G_{0}} \biggr),
\\
V_{3}&=&\frac{-2 ( \sum_{l=1}^{L} ( 2l+1 ) \log l )
}{ (
\sum_{l=1}^{L}(2l+1) )^{2}}\cdot\sum_{l=1}^{L}
\bigl\{ ( 2l+1 ) \log l \bigr\} \operatorname{Cov} \biggl( \frac{\widetilde
{C}_{l}}{C_{l}},
\frac{%
\widetilde{G}(\alpha_{0})}{G_{0}} \biggr).
\end{eqnarray*}
Here%
%
\begingroup
\abovedisplayskip=7pt
\belowdisplayskip=7pt
\begin{eqnarray}
\label{varGgen} \operatorname{Var}
\biggl( \frac{\widehat{G}(\alpha_{0})}{G_{0}} \biggr) &=&\frac{2}{%
\sum_{l=1}^{L}(2l+1)}\nonumber\\[-2pt]
&&\hspace*{0pt}{}\times
\biggl( 1+ \biggl( \frac{G_{N}}{G_{0}} \biggr)^{2}\frac{%
\sum_{l=1}^{L} ( 2l+1 ) l^{-2 ( \gamma-\alpha_{0} )
}}{%
\sum_{l=1}^{L} ( 2l+1 ) }\nonumber\\[-9pt]\\[-9pt]
&&\qquad\hspace*{0pt}{}+
\biggl( \frac{G_{N}}{G_{0}} \biggr) \frac{%
\sum_{l=1}^{L} ( 2l+1 ) l^{- ( \gamma-\alpha_{0} )
}}{%
\sum_{l=1}^{L} ( 2l+1 ) } \biggr)
\nonumber\\[-2pt]
&&{}+\RMO \bigl( L^{-\min ( 2 ( \gamma-\alpha_{0} ), (
\gamma
-\alpha_{0} )  ) -2} \bigr),
\nonumber
\\[-2pt]
\label{CovGgen} \operatorname{Cov}
\biggl( \frac{\widetilde{C}_{l}}{C_{l}^{T}},\frac{\widehat{G}(\alpha_{0})%
}{G_{0}} \biggr) &=&
\frac{2}{\sum_{l=1}^{L}(2l+1)} \nonumber\\[-2pt]
&&{}\times\biggl( 1+ \biggl( \frac
{G_{N}}{%
G_{0}} \biggr)^{2}l^{-2 ( \gamma-\alpha_{0} ) }\\[-2pt]
&&\qquad{}+2
\frac
{G_{N}}{%
G_{0}}l^{- ( \gamma-\alpha_{0} ) }+\RMO \bigl( l^{-\min (
2 ( \gamma-\alpha_{0} ), ( \gamma-\alpha_{0} )
) } \bigr) \biggr);\nonumber
\end{eqnarray}
hence
\begin{eqnarray*}
V_{1} &=& \biggl( \frac{1}{\sum_{l=1}^{L}(2l+1)} \biggr)^{2}\\[-2pt]
&&{}\times 2\sum
_{l=1}^{L} ( 2l+1 ) ( \log l
)^{2}
\biggl( 1+ \biggl( \frac{G_{N}}{G_{0}} \biggr)^{2}l^{-2 (
\gamma
-\alpha_{0} ) }\\[-2pt]
&&\qquad\quad\hspace*{69pt}{}+2
\frac{G_{N}}{G_{0}}l^{- ( \gamma+\alpha
_{0} ) }+\RMo \bigl( l^{-\min ( 2 ( \gamma-\alpha_{0}
), ( \gamma-\alpha_{0} )  ) } \bigr) \biggr);
\\[-2pt]
V_{2}&=& \biggl( \frac{1}{\sum_{l=1}^{L}(2l+1)} \biggr)^{3}2 \Biggl( \sum
_{l=1}^{L} ( 2l+1 ) \log l
\Biggr)^{2}
\\[-2pt]
&&{}\times \biggl( 1+ \biggl( \frac{G_{N}}{G_{0}} \biggr)^{2}
\frac{%
\sum_{l=1}^{L} ( 2l+1 ) l^{-2 ( \gamma-\alpha_{0} )
}}{%
\sum_{l=1}^{L} ( 2l+1 ) }+ \biggl( \frac{G_{N}}{G_{0}} \biggr) \frac{%
\sum_{l=1}^{L} ( 2l+1 ) l^{- ( \gamma-\alpha_{0} )
}}{%
\sum_{l=1}^{L} ( 2l+1 ) }
\biggr) \\[-2pt]
&&{}+\RMo \bigl( L^{-\min (
2 (
\gamma-\alpha_{0} ), ( \gamma-\alpha_{0} )  )
} \bigr);
\\[-2pt]
V_{3} &=&\frac{-4 ( \sum_{l=1}^{L} ( 2l+1 ) \log l )
}{%
( \sum_{l=1}^{L}(2l+1) )^{3}}\sum_{l=1}^{L}
( 2l+1 ) \log l
\\[-2pt]
&&\hspace*{111.4pt}{}\times \biggl( 1+ \biggl( \frac{G_{N}}{G_{0}} \biggr)^{2}l^{-2 (
\gamma-\alpha_{0} ) }+2
\frac{G_{N}}{G_{0}}l^{- (
\gamma+\alpha_{0} ) }\\[-2pt]
&&\hspace*{129pt}{}+\RMO \bigl( l^{-\min ( 2 (
\gamma-\alpha_{0} ),
(\gamma-\alpha_{0} )  )} \bigr) \biggr).
\end{eqnarray*}
\endgroup
For $\gamma\geq\alpha_{0}$, we have hence
%
\begin{equation}
\label{varSl1} \lim_{L\rightarrow\infty}2 \biggl( 1+\frac{G_{N}}{G_{0}}
\delta_{\alpha
_{0}}^{\gamma} \biggr)^{2}L^{2}\operatorname{Var}
\bigl\{ \overline{S}_{L}(\alpha_{0}) \bigr\} =1.
\end{equation}
In fact, for $\alpha_{0}<\gamma$, we obtain%
\begin{eqnarray*}
V_{1}&=& \biggl( \frac{1}{\sum_{l=1}^{L}(2l+1)} \biggr)^{2}2\sum
_{l=1}^{L} ( 2l+1 ) ( \log l )^{2} \bigl(
1+\RMO \bigl( l^{- ( \gamma
-\alpha_{0} ) } \bigr) \bigr);
\\
V_{2}&=& \biggl\{ \frac{1}{\sum_{l=1}^{L}(2l+1)} \biggr\}^{3}2 \Biggl(
\sum_{l=1}^{L} ( 2l+1 ) \log l
\Biggr)^{2}+\RMO \bigl( L^{- (
\gamma-\alpha_{0} ) -2} \bigr);
\\
V_{3}&=&-4 \biggl\{ \frac{1}{\sum_{l=1}^{L}(2l+1)} \biggr\}^{3} \Biggl(
\sum_{l=1}^{L} ( 2l+1 ) \log l
\Biggr)^{2}+\RMO \bigl( L^{- (
\gamma-\alpha_{0} ) -2} \bigr),
\end{eqnarray*}
so that
\begin{eqnarray*}
&& \operatorname{Var} \bigl\{ \overline{S}_{L}(\alpha_{0}) \bigr\}
\\
&&\quad=\frac{2}{ ( \sum_{l=1}^{L}(2l+1) )^{3}} \Biggl( \sum_{l=1}^{L}
( 2l+1 ) \sum_{l=1}^{L} ( 2l+1 ) ( \log l
)^{2}- \Biggl( \sum_{l=1}^{L} (
2l+1 ) \log l \Biggr)^{2} \Biggr) \\
&&\qquad{}+\RMO \bigl( L^{- ( \gamma-\alpha_{0} ) -2} \bigr)
\\
&&\quad=\frac{2}{L^{6}}\frac{L^{4}}{4}+\RMO \bigl( L^{- ( \gamma-\alpha
_{0} ) -2}
\bigr) =\frac{1}{2L^{2}}+\RMO \bigl( L^{- ( \gamma
-\alpha
_{0} ) -2} \bigr)
\end{eqnarray*}
by using (\ref{sumLcorol}) and (\ref{slog0}) with $s=0$ to obtain (\ref%
{varSl1}). Similarly, if $\alpha_{0}=\gamma$, we have%
\begin{eqnarray*}
\operatorname{Var} \biggl( \frac{\widehat{G}(\alpha_{0})}{G_{0}} \biggr) &=&\frac{2}{%
\sum_{l=1}^{L}(2l+1)} \biggl( 1+
\frac{G_{N}}{G_{0}} \biggr)^{2}+\RMO \bigl( L^{- ( \gamma-\alpha_{0} ) -2} \bigr);
\\
\operatorname{Cov} \biggl( \frac{\widetilde{C}_{l}}{C_{l}^{T}},\frac{\widehat{G}(\alpha_{0})%
}{G_{0}} \biggr) &=&\frac{2}{\sum_{l^{\prime}=1}^{L}(2l+1)}
\biggl( 1+\frac
{G_{N}%
}{G_{0}} \biggr)^{2}+\RMO \bigl( L^{- ( \gamma-\alpha_{0} )
-2}
\bigr).
\end{eqnarray*}
Simple calculations lead then to (\ref{varSl1}). For $\gamma<\alpha_{0}<\gamma+1$, we have%
\begin{eqnarray*}
V_{1} &=&\frac{2 ( {G_{N}}/{G_{0}} )^{2}}{ (
\sum_{l=1}^{L}(2l+1) )^{4}} \Biggl( \sum
_{l=1}^{L}(2l+1) \Biggr)^{2}\sum
_{l=1}^{L} ( 2l+1 ) ( \log l )^{2} \bigl(
l^{2 ( \alpha_{0}-\gamma ) }+\RMo \bigl( l^{2 ( \alpha
_{0}-\gamma ) } \bigr) \bigr)
\\
&=&\frac{2 ( {G_{N}}/{G_{0}} )^{2}}{ (
\sum_{l=1}^{L}(2l+1) )^{4}}\frac{L^{6+2 ( \alpha_{0}-\gamma
) }}{1+ ( \alpha_{0}-\gamma ) } \\
&&{}\times\biggl( \log^{2}L-
\frac
{\log
L}{ ( 1+ ( \alpha_{0}-\gamma )  ) }+\frac{L^{2 (
1+ ( \alpha_{0}-\gamma )  ) }}{ ( 1+ ( \alpha_{0}-\gamma )  )^{2}}+\RMo(1) \biggr);
\\
V_{2} &=&\frac{2 ( {G_{N}}/{G_{0}} )^{2}}{ (
\sum_{l=1}^{L}(2l+1) )^{4}} \Biggl( \sum
_{l=1}^{L} ( 2l+1 ) \log l \Biggr)^{2}\sum
_{l=1}^{L} ( 2l+1 ) \bigl(
l^{2 (
\alpha
_{0}-\gamma ) }+\RMo \bigl( l^{2 ( \alpha_{0}-\gamma )
} \bigr) \bigr)
\\
&=&\frac{2 ( {G_{N}}/{G_{0}} )^{2}}{ (
\sum_{l=1}^{L}(2l+1) )^{4}}\frac{L^{6+2 ( \alpha_{0}-\gamma
) }}{1+ ( \alpha_{0}-\gamma ) } \biggl( \log^{2}L-\log
L+%
\frac{1}{4}+\RMo(1) \biggr);
\\
V_{3} &=&\frac{-4 ( {G_{N}}/{G_{0}} )^{2}}{ (
\sum_{l=1}^{L}(2l+1) )^{4}} \Biggl( \sum
_{l=1}^{L}(2l+1) \Biggr) \Biggl( \sum
_{l=1}^{L} ( 2l+1 ) \log l \Biggr)
\\
&&{}\times \Biggl( \sum_{l=1}^{L} ( 2l+1
) \log l \biggl( \frac
{G_{N}}{%
G_{0}} \biggr)^{2} \bigl(
l^{2 ( \alpha_{0}-\gamma ) }+\RMo \bigl( l^{2 ( \alpha_{0}-\gamma ) } \bigr) \bigr) \Biggr)
\\
&=&\frac{-4 ( {G_{N}}/{G_{0}} )^{2}}{ (
\sum_{l=1}^{L}(2l+1) )^{4}}\frac{L^{6+2 ( \alpha_{0}-\gamma
) }}{1+ ( \alpha_{0}-\gamma ) }
\\
&&{}\times \biggl( \log^{2}L+\frac{1}{4 ( 1+ ( \alpha_{0}-\gamma
)  ) }-\frac{\log L}{2}
\biggl( 1+\frac{1}{ ( 1+ (
\alpha_{0}-\gamma )  ) } \biggr) +\RMo(1) \biggr)
\end{eqnarray*}
by using (\ref{sumLcorol}) and (\ref{slog0}) with $s=2 ( \alpha_{0}-\gamma )$. Hence, we obtain
\[
\lim_{L\rightarrow\infty}L^{2-2 ( \alpha_{0}-\gamma )
}\operatorname{Var} \bigl\{ \overline{S}_{L}(
\alpha_{0}) \bigr\} =2 \biggl( \frac
{G_{N}}{G_{0}%
}
\biggr)^{2}H ( \alpha_{0}-\gamma ),
\]
so that the asymptotic behaviour of the variance is fully understood.

To conclude the proof of the central limit theorem, let us focus on
$\gamma
<\alpha_{0}<\gamma+1$ and write%
\[
L^{1- ( \alpha_{0}-\gamma ) }S_{L}(\alpha_{0})=\frac{1}{%
L^{1+ ( \alpha_{0}-\gamma ) }+\RMO(L^{1+ ( \alpha
_{0}-\gamma
) })}\sum
_{l}(A_{l}+B_{l}),
\]
where%
\[
A_{l}=(2l+1)\log l \biggl\{ \frac{\widetilde{C}_{l}}{C_{T,l}}-1
\biggr\},\qquad
B_{l}=(2l+1)\log l \biggl\{ \frac{\widetilde{G}_{L}(\alpha_{0})}{G_{0}}%
-1 \biggr
\}.
\]
The analysis of fourth-order cumulants%
\begin{eqnarray*}
&&
\frac{1}{L^{4 ( 1+ ( \alpha_{0}-\gamma )  )
}}\operatorname{cum} \biggl\{ \sum_{l_{1}}(A_{l_{1}}+B_{l_{1}}),
\sum_{l_{2}}(A_{l_{2}}+B_{l_{2}}), \sum
_{l_{3}}(A_{l_{3}}+B_{l_{3}}),\sum
_{l_{4}}(A_{l_{4}}+B_{l_{4}}) \biggr\}\\
&&\quad
=\RMO_{L}\biggl(\frac{\log^{4}L}{L^{2+ ( \alpha_{0}-\gamma ) }}\biggr)
\end{eqnarray*}
is entirely analogous to the noiseless case.
\end{pf*}
\end{appendix}



\printhistory

\end{document}